\newtheorem{assumption}{Assumption}
\def\qed{ \ \vrule width.2cm height.2cm depth0cm\smallskip}
\newenvironment{proof}{\noindent {\bf Proof.\/}}{$\qed$\vskip 0.1in}
\newcommand{\ba}{\begin{array}}
\newcommand{\ea}{\end{array}}
\newcommand{\be}{\begin{equation}}
\newcommand{\ee}{\end{equation}}
\newcommand{\bea}{\begin{eqnarray}}
\newcommand{\eea}{\end{eqnarray}}
\newcommand{\beaa}{\begin{eqnarray*}}
\newcommand{\eeaa}{\end{eqnarray*}}
\def\dbE{\mathbb{E}}
\def\dbF{\mathbb{F}}
\def\dbI{\mathbb{I}}
\def\dbL{\mathbb{L}}
\def\dbP{\mathbb{P}}
\def\dbR{\mathbb{R}}
\def\dbS{\mathbb{S}}
\def\dbQ{\mathbb{Q}}
\def\dbZ{\mathbb{Z}}
\def\a{\alpha}
\def\b{\beta}
\def\g{\gamma}
\def\d{\delta}
\def\e{\varepsilon}
\def\z{\zeta}
\def\l{\lambda}
\def\n{\nu}
\def\si{\sigma}
\def\th{\theta}
\def\o{\omega}
\def\G{\Gamma}
\def\D{\Delta}
\def\O{\Omega}
\def\cA{{\cal A}}
\def\cB{{\cal B}}
\def\cC{{\cal C}}
\def\cE{{\cal E}}
\def\cF{{\cal F}}
\def\cI{{\cal I}}
\def\cL{{\cal L}}
\def\cM{{\cal M}}
\def\cP{{\cal P}}
\def\cW{{\cal W}}
\def\cX{{\cal X}}
\def\no{\noindent}
\def\ms{\medskip}
\def\q{\quad}
\def\pa{\partial}
\def\qed{ \hfill \vrule width.25cm height.25cm depth0cm\smallskip}
\newcommand{\basa}{\begin{assumption}}
\newcommand{\easa}{\end{assumption}}
\newcommand{\bas}{\begin{assum}}
\newcommand{\eas}{\end{assum}}
\def\pa{\partial}
\def\tl{\tilde}
\def\1{{\bf 1}}
\def\:{\!:\!}
\definecolor{alp}{rgb}{0.0, 0.5, 0.0}
\newtheorem{thm}{Theorem}[section]
\newtheorem{lem}[thm]{Lemma}
\newtheorem{rem}[thm]{Remark}
\newtheorem{eg}[thm]{Example}
\newtheorem{defn}[thm]{Definition}
\newtheorem{assum}[thm]{Assumption}
\begin{document}

\title{\bf  Heterogeneous Mean Field Games and Local Well-posedness} 
\author{Bixing Qiao\thanks{\noindent  Dept. of Math., 
University of Southern California. E-mail:
\href{mailto:bqiao@usc.edu}{bqiao@usc.edu}}  
}
\date{}
\maketitle

\begin{abstract} 
Motivated by the recent interests in asymmetric mean field games, this paper provides a general framework of Heterogeneous Mean Field Game (HMFG) that subsumes different formulations of graphon mean field games. The key feature of the HMFG is that the players interact with the population through the density ensemble. In this case, the HMFG system becomes an infinite-dimensional Forward-Backward SDE (FBSDE) system. We show that the FBSDE is locally well-posed, thus the HMFG has a unique equilibrium. In addition, we show that the equilibrium of HMFG is a good approximate equilibrium of the corresponding N-Player Game. Lastly, we derive the It\^{o} formula of infinite-dimensional measure flow and use it to obtain the master equation for HMFG as a decoupling field of the infinite-dimensional FBSDE system.
\end{abstract}

\no{\bf Keywords:}  Heterogeneous Mean Field Game, master equation, propagation of chaos

\ms
\no{\it 2020 AMS Mathematics Subject Classification:}  91A16, 91A43, 35R15, 49N80, 93E20

\vfill\eject


\label{sect-Introduction}

\section{Introduction}
Proposed independently by Huang-Malhame-Caines \cite{huang2006large} and Lasry-Lions \cite{Lasry-Lions}, Mean Field Games (MFG) theory has become an important tool in economics and finance. The c lassical MFG theory considers the games with infinitely many homogeneous players, where each agent is influenced by the population only through its empirical distribution. The symmetric structure yields a single representative system describing all players. Such simple structure attracts many models to adopt the MFG theory for describing systems with large populations. We refer to the book by Achdou-Cardaliaguet-Delarue-Porretta-Santambrogio \cite{ACDPS} and Carmona-Delarue \cite{carmona2018probabilistic} for the general theory of MFG.  In addition, for applications of MFG, see e.g. Achdou-Han-Lasry-Lions-Moll \cite{AHLLM}, Aurell-Carmona-Dayanikli-Lauri\`{e}re \cite{Aurell2022, Aurell2022b}, Cardaliaguet-Lehalle \cite{CardaliaguetLehalle}, Carmona-Cormier-Soner \cite{Carmona02092023}, Carmona-Dayanıklı-Lauri{\`e}re \cite{Carmona2022}, Carmona-Delarue-Lacker \cite{Carmona2017},  Carmona-Fouque-Sun \cite{SystemRisk}, Carmona-Fouque-Mousavi-Sun \cite{Carmona2018}, Carmona-Graves \cite{Carmona2020}, Élie-Ichiba-Laurière \cite{elie2020}, Ichiba-Yang \cite{yang2024}, Lacker-Zariphopoulou \cite{LackerZariphopoulou}, Lacker-Soret \cite{LackerSoret}, Tangpi-Wang \cite{Tangpi2024,Tangpi2025}, and Tangpi-Zhou \cite{tangpi2023optimal}. 


While having been very successful, the MFG theory has a drawback. The strong symmetric structure it requires can be hard to meet in applications. A prominent example is systemic-risk modeling, where standard MFG formulations assume that banks contribute equally to correlations and affect all others identically, see for example \cite{ Carmona2017, Carmona2018, SystemRisk,elie2020}. In practice, interbank lending/borrowing is heterogeneous. Furthermore, empirical studies indicate that an asymmetric network structure is crucial, see e.g., \cite{huser2015too}. This motivate researchers to propose a model that can incorporate an asymmetric structure while retaining MFG tractability.

A popular asymmetric extension of the MFG is the Graphon MFG (GMFG) model, introduced by Aurell-Carmona-Lauriere \cite{aurell2021stochastic}, Caines-Huang \cite{caines2019graphon} and Carmona-Cooney-Graves-Lauriére \cite{GMFG1}. Graphon is a symmetric, measurable function: $w:[0,1]^2\mapsto\dbR$ that represents the limit of some dense network structure with vertices go to infinity. The value of $w(\a,\b)$ encodes the connection strength between node $\a$ and $\b$. In GMFG, the players interact with the population through some kind of aggregation terms over the graphon. Under this idea, different formulations are proposed. In \cite{aurell2021stochastic}, the authors use the aggregation of the states of all players over the graphon. In \cite{caines2019graphon} and \cite{Crucianelli24}, the authors use the aggregation of the laws of all players over the graphon. In \cite{Pham2024}, the authors consider a normalized aggregation of the laws of all players over the graphon.

In this project, we propose a unifying framework for MFG with asymmetric agents, which we call the Heterogeneous Mean Field Game (HMFG). We consider a system with a continuum of players, each interacting with the population through the law ensemble of all types. We observe that the graphon aggregation of the law of all players as well as the graphon weighted average of the law of all players are functions of the law ensemble of all players. Furthermore, by the so-called exact law of large numbers, the graphon aggregation of the states of all players is in fact a function of population law ensemble as well. Consequently, the models in \cite{aurell2021stochastic, caines2019graphon, Pham2024,Crucianelli24} arise as special cases of HMFG.

A key analytical challenge in the HMFG is working with a continuum of players. In particular, the model requires a continuum of Brownian motions that are pairwise independent while also integrable with respect to the index space in order to define the aggregate term. In \cite{aurell2021stochastic}, the Fubini extension space is introduced to tackle the problem. For the general theory of the Fubini extension space, see \cite{SUN200631,SUN2009432}. In \cite{Pham2024}, the authors discuss the space of population law ensemble $\cM$ and define graphon as an operator from $\cM$ to itself. In this paper, we further discuss the space $\cM$ under weaker topology and show that it is a complete metric space. This is crucial for a contraction mapping argument establishing local well-posedness.

Having set the infinite-dimensional space of state processes, we define the equilibrium of the HMFG as a fixed point. Given population law ensemble $\rho^*$, the player $\th$ finds optimal control $\a^{*,\th}$; when these strategies are used, the resulting law ensemble is again $\rho^*$. We take the population law ensemble $\rho^*$ as the equilibrium of the HMFG, since different optimal controls could result in the same $\rho^*$. This definition resembles that of the MFG model. Next we derive the coupled infinite-dimensional PDE system. Each player solves a Hamilton-Jacobi-Bellman (HJB) equation for the optimazation problem and a Fokker-Planck equation for the law of her dynamic. The PDE systems of all players are coupled together through the solution ensemble of the Fokker-Planck equations. We also derive the corresponding FBSDE characterization. The FBSDE viewpoint is convenient because the process ensemble naturally lives in the Fubini extension; measurability in the index space then follows directly. Proving local well-posedness of the coupled infinite-dimensional FBSDE yields existence and uniqueness of the HMFG equilibrium.

A second objective of this project is to establish the connection between the $N-$player game and its HMFG limit, commonly referred to as propagation of chaos, see e.g. \cite{carmona2018probabilistic}. Under conditions that guarantees the truncated model are close to the HMFG model, we show that the HMFG equilibrium is an approximate equilibrium for the corresponding N-player game. This parallels known GMFG results obtained under step-graphon convergence and continuity of aggregate terms, see e.g. \cite{aurell2021stochastic, caines2019graphon, cao2025}. 

Finally, HMFG framework enables a dynamic analysis via the master equation, provided the game value is unique. Following the MFG literature, the master equation is the PDE that characterizes the dynamic value function of the game. In the HMFG model, the master equation becomes an infinite-dimensional system of equations. For each player, the dynamic game value function is a function of time, state, and the marginal population law ensemble that satisfies a PDE. To derive the master equation, we first derive the It\^o formula for functions defined along the flow of the population law ensemble. Apply the It\^o formula, we show that the master equation of HMFG is a decouping field of the FBSDE system and thus it provides a unique solution to the HMFG. The well-posedness of master equation of HMFG is a serious topic and will be further studied in our future project.
The master equation of MFG is first introduced by Lions in \cite{Lions08}. Researchers make serious efforts on its global well-posedness, see e.g. Bertucci \cite{Bertucci02122023}, Bertucci-Cecchin \cite{Bertucci2024},  Bertucci-Lasry-Lions \cite{Bertucci04032019}, Cardaliaguet-Delarue-Lasry-Lions \cite{cardaliaguet2019master}, Cardaliaguet-Souganidis \cite{Cardaliaguet2022}, Chassagneux-Crisan-Delarue\cite{CCD}, Gangbo-M{\'e}sz{\'a}ros \cite{Gangbo2022}, Gangbo-Meszaros-Mou-Zhang \cite{GMMZ}, Gangbo-Świech \cite{GANGBO20156573}, Graber-M{\'e}sz{\'a}ros \cite{Graber2024}, Mou-Zhang \cite{mou2019wellposedness, MZ,MZ2}, Mou-Zhang-Zhou \cite{mou2025}.


Here we discuss more on previous research of GMFG. In the work of Lacker-Soret \cite{LS}, authors propose to consider GMFG as a special type of MFG with the index as an additional dimension. One challenge in GMFG is how to model the limit of game problems on sparse networks. Since graphon can only be attained through taking the limit of a dense graph. Several works have attempted to address this issue. In \cite{LRW}, the authors address the issue by considering local weak convergence of marked graphs. In \cite{FFI, FFI2} and \cite{Crucianelli24}, the authors address the issue by exploiting the special structure of certain types of networks. 
Another stream of literature works on numerically solving the GMFG system. In the work by Cui-Koeppl \cite{Kai}, numerical solution of GMFG has been explored using reinforcement learning. In the work of Aurell-Carmona-Dayanıklı-Laurière \cite{Aurell2022}, the authors consider a GMFG on finite state space, thus reducing the infinite-dimensional system to a finite-dimensional one.


The rest of the paper is organized as follows. In Section \ref{sect-setup}, we introduce the notations, Fubini Extensions, probability measure ensembles and related technical tools. In Section \ref{sect-HMFG}, we state the heterogeneous $N-$player game and the corresponding HMFG. We define the equilibrium of HMFG and state the PDE system and the FBSDE system characterizations. In Section \ref{sect-smallT}, we establish the local well-posedness of the FBSDE and, thus, the existence of the unique equilibrium of the HMFG. In Section \ref{sect-convergence}, we show that the equilibrium of the HMFG is a desired approximate equilibrium the corresponding $N-$player game. In Section \ref{sect-MasterEq}, we derive the master equation of HMFG as the decoupling field of the FBSDE system.  
\section{Setup}\label{sect-setup}
\subsection{Fubini Extension}

Consider a filtered probability space $(\Omega,\cF,\dbP)$.
Denote $(I,\cB_I,\l_I)$ as the index space, where $I=[0,1]$, and $\cB_I$ is the Borel $\si-$algebra on $I$ and $\l_I$ is the Lebesgue measure on $I$. For any Polish space $E$, $\cP(E)$ denotes the set of Borel probability measures on $E$.

We will study an inhomogeneous model, thus we need to consider infinite-dimensional random variable $\xi$ that is measurable with respect to both $\dbP$ and $\l_I$. We want $\forall \th,\tl\th\in I$, $\xi^\th$ is independent with $\xi^{\tl\th}$. In addition, we want $\xi$ to satisfy the following Fubini property: 
$$\int_{I\times\Omega}\xi^\th(\o)d\dbQ(\o,\th)=\int_I \dbE \xi^\th d\th=\dbE\int_I  \xi^\th d\th$$
The solution is to extend the product space $(I\times\O,\cB_I\times\cF,\l_I\times \dbP)$ to its Fubini Extension defined by Sun-Zhang in \cite{SUN2009432}. 
\begin{defn} Let $(I\times\O,\cB_I\times\cF,\l_I\times \dbP)$ be the usual product probability spaces of $(I,\cB_I,\l_I)$ and $(\O,\cF,P)$. A probability space $(I\times\O,\cW,\dbQ)$ extending $(I\times\O,\cB_I\times\cF,\l_I\times \dbP)$ is said to be a \textit{Fubini Extension} if for any $\dbQ$-integrable function $\xi$ on $(I\times \O)$,\\
    (1) $\xi^\cdot(\o)$ is $(I,\cB_I,\l_I)$-integrable for $\l_I$-almost all $\o\in\O$, and $\xi^\th(\cdot)$ is $(\O,\cF,\dbP)$-integrable for $\dbP$-almost all $\th\in I$.\\
    (2) $\dbE \xi^\th $ is integrable on $(I,\cB_I,\l_I)$ and $\int_I  \xi^\th(\o) d\th$ is integrable on $(\O,\cF,\dbP)$. Furthermore 
    $$\int_{I\times\Omega}\xi^\th(\o)\l_I(d\th)=\int_I \dbE \xi^\th \l_I(d\th)=\dbE\int_I  \xi^\th \l_I(d\th).$$
\end{defn}
In addition, we need the following theorem from \cite{SUN2009432} to construct a random variable $\xi$ that is measurable with respect to the Fubini extension space and $\xi^\th$ and $\xi^{\tl\th}$ are pairwise independent for $\th,\tl\th\in I$ and $\th\neq\tl\th$
\begin{thm}\label{fubiniEx}
    Let $X$ be a complete separable metric space. There exists a probability space $(I, \cB_I, \l_I)$ extending the Lebesgue unit interval, a probability space $(\O,\cF, \dbP)$, and a Fubini extension $(I\times\O, \cB_I\boxtimes\cF, \l_I\boxtimes\dbP)$ such that for any measurable mapping $\phi$ from $(I, \cB_I, \l_I)$ to the space $\cP(X)$ of Borel probability measures on $X$, there is a $\cB_I\boxtimes\cF$-measurable process $\xi:I\times\O\mapsto X$ such that the random variables $\xi_\th$ are essentially pairwise independent, and the distribution $\dbP\circ f_\th^{-1}$ is the given distribution $\xi_\th$ for $\l_I$-almost all $\th\in I$.
\end{thm}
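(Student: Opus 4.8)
The plan is to split the statement into two pieces. First, construct a single ``universal'' Fubini extension carrying a jointly measurable process $\eta:I\times\O\to[0,1]$ whose sections $\eta_\th$ are essentially pairwise independent and uniform on $[0,1]$. Second, for an arbitrary measurable $\phi:I\to\cP(X)$, produce the desired $\xi$ from $\eta$ by a parametrized quantile transform. The second step is soft; the genuine content is the first, and it cannot be carried out inside any classical product space, nor any countably generated probability space, since those cannot support a continuum of jointly measurable, essentially pairwise independent, nondegenerate random variables --- this is the familiar failure of Fubini's theorem, which is exactly why a new notion of product space is required.

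For the first step I would use the hyperfinite Loeb-product construction. Fix an infinite hyperinteger $N\in{}^*\dbN\setminus\dbN$; let $I_0=\{1,\dots,N\}$ with the internal uniform counting probability $\bar\l$, let $\Lambda\subset{}^*[0,1]$ be a hyperfinite $1/N$-grid with internal uniform probability $q$, and set $(\O_0,\cF_0,\dbP_0)=(\Lambda,q)^{I_0}$ with coordinate maps $\o\mapsto\o(\th)$, $\th\in I_0$. Passing to Loeb measures, take $(I,\cB_I,\l_I)=(I_0,L(\bar\l))$ (which, after a measure isomorphism, extends the Lebesgue unit interval), $(\O,\cF,\dbP)=(\O_0,L(\dbP_0))$, and $(I\times\O,\cB_I\boxtimes\cF,\l_I\boxtimes\dbP)=(I_0\times\O_0,L(\bar\l\otimes\dbP_0))$. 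Keisler's Fubini theorem for Loeb spaces shows that this last space is a Fubini extension of the product of the two Loeb spaces, i.e. it verifies conditions (1) and (2) in the definition of Fubini Extension above. The internal ${}^*$-independence of the coordinate maps transfers, under the Loeb measure, to essential pairwise independence of the standard parts $\eta_\th:=\mathrm{st}\,\o(\th)$, which have common law $L(q)=$ the uniform law on $[0,1]$; joint $\cB_I\boxtimes\cF$-measurability of $(\th,\o)\mapsto\eta_\th(\o)$ is automatic from Loeb measurability of the internal evaluation.

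For the second step I would establish a parametrized quantile lemma: for any Polish $X$ and any measurable $\phi:I\to\cP(X)$ there is a jointly measurable $g:I\times[0,1]\to X$ with $g(\th,\cdot)_{*}\mathrm{Leb}=\phi(\th)$ for $\l_I$-a.e. $\th$. Using a Borel isomorphism of $X$ with a Borel subset of $[0,1]$ one reduces to $X=[0,1]$: the cumulative distribution $(\th,x)\mapsto F_\th(x):=\phi(\th)([0,x])$ is jointly measurable (measurable in $\th$, monotone in $x$), and its generalized inverse $g(\th,u):=\inf\{x:F_\th(x)\ge u\}$ is jointly measurable because $\{(\th,u):g(\th,u)\le x\}=\{(\th,u):u\le F_\th(x)\}$, while $g(\th,\cdot)$ pushes Lebesgue measure to $\phi(\th)$. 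Then $\xi_\th(\o):=g(\th,\eta_\th(\o))$ is $\cB_I\boxtimes\cF$-measurable, essentially pairwise independent as a pointwise function of essentially pairwise independent variables, and $\dbP\circ\xi_\th^{-1}=\phi(\th)$ for $\l_I$-a.e. $\th$, which is the assertion.

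The main obstacle is the first step: proving that the Loeb product $L(\bar\l\otimes\dbP_0)$ satisfies the Fubini identity $\int_{I\times\O}\xi\,d(\l_I\boxtimes\dbP)=\int_I\dbE\xi^\th\,d\th=\dbE\int_I\xi^\th\,d\th$ for \emph{every} integrable $\xi$ and not merely for internal ones --- this is Keisler's theorem, which rests on approximating Loeb-integrable functions by internal ones and invoking internal Fubini --- together with the fact that one fixed extension must accommodate \emph{all} measurable $\phi$ at once, which is why $\O_0$ is taken to be the full internal power $\Lambda^{I_0}$, large enough that every marginal prescription is already present and has only to be selected by $g$. The appearance of essential (not mutual) pairwise independence and of the $\l_I$-a.e. qualifiers is intrinsic to this construction and matches precisely what the theorem claims.
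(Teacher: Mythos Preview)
The paper does not supply its own proof of this theorem: it is quoted verbatim as a result from Sun--Zhang \cite{SUN2009432} and used as a black box. Your two-step outline --- build a universal Fubini extension carrying an e.p.i.\ family of uniform $[0,1]$ variables via a hyperfinite Loeb product and Keisler's Fubini theorem, then push forward through a jointly measurable quantile map to realize any prescribed $\phi:I\to\cP(X)$ --- is essentially the argument in the cited source and its predecessors (Sun \cite{SUN200631}), so your sketch is both correct and faithful to how the result is actually established in the literature, even though the present paper itself offers no proof to compare against.
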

By  essentially pairwise independent (e.p.i.), we mean that for $\l_I$-almost all $\th\in I$, $\xi^\th$ is independent with $\xi^{\tl\th}$ for $\l_I$-almost all $\tl \th\in I$.

Now $\forall\th\in I$, let $\phi(\th)$ be the law of the Brownian motion. Then by Theorem \ref{fubiniEx}, there exists an $\cF\boxtimes\cI-$measurable Brownian motion $B$ such that $B$ is e.p.i.. Denote $\dbF^\th$ as the filtration generated by $B^\th$.

One of the consequences of the Extended Fubini Theorem is the so-called Exact Law of Large Numbers (ELLN) by Sun \cite{SUN200631}: if we integrate over $I$ for the e.p.i. Brownian motion ensemble $B$ that is constructed above at any $t\in T$, we get
$$\int_I B_t^\th \l_I(d\th)=\dbE^{\dbP\boxtimes \l}B_t=\int_I\dbE B_t^\th \l_I(d\th)=0$$

Let us denote $L_\boxtimes^2(\O\times I;C([0,T];
\dbR^d))$ as the space of $\dbP\boxtimes\l$-square integrable random variables, i.e. for all $X\in L_\boxtimes^2(\O\times I;C([0,T];
\dbR^d))$:
$$\int_{\O\times I} \sup_{t\in[0,T]}|X_t^\th(\omega)|^2\dbP\boxtimes\l(d\o,d\th)<+\infty$$

\subsection{Topology on Measure Valued Functions}
Similar to the approach in MFG, we want to define the equilibrium of HMFG through the fixed point argument. In MFG model, the player depends on the population through a single law that represents the distribution of the population dynamic. Since HMFG considers the scenario of a large number of heterogeneous agents, each player will depend on the population through an probability distribution ensembles. In this section, we will study the topology of such objects.

For any Polish space $E$ equipped with $p-$metric $d_p$, we equip the space of probability measure $\cP(E)$ with Wasserstein p-distance $W_p$ metric for $p\geq1$: for any $\mu,\nu\in\cP(E)$ 
$$W_p(\mu,\nu)=\inf_{\rho\in\G(\mu,\nu)}(\dbE^\rho |d(X,Y)|^p)^\frac{1}{p}$$
where $\G(\mu,\n)$ is the set of joint distribution with marginal distribution $\mu$ and $\nu$ and $X\sim\mu$, $Y\sim\n$. 

When $E:=C([0,T],\dbR^d)$, $(E,\|\cdot\|_p)$ is complete and thus $(\cP(E),W_p)$ is also complete. Denote the space $\cM$ as the set of Lebesgue measurable functions $\phi:I\mapsto\cP(E)$. Let us define the function $d_p:\cM\times \cM \mapsto\dbR$: for $\phi,\psi\in\cM$
$$d_p(\phi,\psi):=\int_I W_p(\phi(\th),\psi(\th))\l_I(d\th).$$
We have the following result concerning the new metric $d_p$.
\begin{thm}\label{completeness}
    $(\cM, d_p)$ is a complete metric space.
\end{thm}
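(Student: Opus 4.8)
The plan is to first check that $d_p$ is a genuine metric on $\cM$ (understood, as is implicit in the definition, modulo $\l_I$-a.e.\ equality), and then to prove completeness by the classical device ``a Cauchy sequence possessing a convergent subsequence converges'', reducing everything to the completeness of $(\cP(E),W_p)$ recalled just before the statement. For the metric axioms: the integrand $\th\mapsto W_p(\phi(\th),\psi(\th))$ is measurable because $W_p:\cP(E)\times\cP(E)\to[0,+\infty]$ is jointly lower semicontinuous for weak convergence, hence Borel, and $\phi,\psi$ are measurable, so the composition is measurable; thus $d_p(\phi,\psi)\in[0,+\infty]$ is well defined, and one works, as usual, inside the set of $\phi$ at finite $d_p$-distance from a fixed reference element (e.g.\ $\th\mapsto\delta_0$) so that $d_p$ is finite-valued. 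Symmetry is inherited from $W_p$; the triangle inequality follows by integrating over $I$ the pointwise inequality $W_p(\phi(\th),\chi(\th))\le W_p(\phi(\th),\psi(\th))+W_p(\psi(\th),\chi(\th))$; and $d_p(\phi,\psi)=0$ forces $W_p(\phi(\th),\psi(\th))=0$ for $\l_I$-a.e.\ $\th$, i.e.\ $\phi=\psi$ $\l_I$-a.e.

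\textbf{Reduction to a subsequence.} Let $\{\phi_n\}\subset\cM$ be $d_p$-Cauchy and extract a subsequence with $d_p(\phi_{n_{k+1}},\phi_{n_k})\le 2^{-k}$. By Tonelli,
$$\int_I \sum_{k\ge1} W_p\big(\phi_{n_{k+1}}(\th),\phi_{n_k}(\th)\big)\,\l_I(d\th)=\sum_{k\ge1} d_p(\phi_{n_{k+1}},\phi_{n_k})\le 1<+\infty,$$
so $\sum_{k\ge1}W_p(\phi_{n_{k+1}}(\th),\phi_{n_k}(\th))<+\infty$ for $\l_I$-a.e.\ $\th$, and for each such $\th$ the sequence $\{\phi_{n_k}(\th)\}_k$ is Cauchy in $(\cP(E),W_p)$.

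\textbf{Construction of the limit and convergence.} Since $(E,\|\cdot\|_p)$ is complete, $(\cP(E),W_p)$ is complete, so for $\l_I$-a.e.\ $\th$ we may put $\phi(\th):=\lim_k \phi_{n_k}(\th)$ and set $\phi(\th):=\delta_0$ on the exceptional null set; $\phi$ is $\l_I$-measurable as a $\l_I$-a.e.\ pointwise limit of the measurable maps $\phi_{n_k}$ into the (separable) space $\cP(E)$. Letting $m\to\infty$ in $W_p(\phi_{n_k}(\th),\phi_{n_m}(\th))\le\sum_{j=k}^{m-1}W_p(\phi_{n_{j+1}}(\th),\phi_{n_j}(\th))$ gives, for a.e.\ $\th$, $W_p(\phi(\th),\phi_{n_k}(\th))\le\sum_{j\ge k}W_p(\phi_{n_{j+1}}(\th),\phi_{n_j}(\th))$, and integrating over $I$ yields $d_p(\phi,\phi_{n_k})\le\sum_{j\ge k} 2^{-j}=2^{-k+1}\to0$ as $k\to\infty$; in particular $d_p(\phi,\phi_{n_1})<+\infty$, so $\phi\in\cM$. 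Finally, for $\e>0$ pick $N$ with $d_p(\phi_n,\phi_m)<\e/2$ for $n,m\ge N$ and then $k$ with $n_k\ge N$ and $d_p(\phi_{n_k},\phi)<\e/2$; the triangle inequality gives $d_p(\phi_n,\phi)<\e$ for $n\ge N$, so $\phi_n\to\phi$ and $(\cM,d_p)$ is complete.

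\textbf{Expected main obstacle.} The substance of the argument is measure-theoretic bookkeeping rather than a deep new idea: one must carefully justify measurability of $\th\mapsto W_p(\phi(\th),\psi(\th))$ and of the a.e.-pointwise limit $\phi$ (here it helps that the Borel $\si$-algebra of $(\cP(E),W_p)$ coincides with that of the weak topology, so $\cP(E)$ behaves as a Polish target), and one must control the possibility that $W_p$ takes the value $+\infty$, handled once and for all by restricting to the $d_p$-finite ``component'' of a fixed reference function. Beyond this, the proof is a routine transcription of the classical completeness proof for $L^1$-type spaces.
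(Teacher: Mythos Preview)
Your proof is correct and follows the classical Riesz--Fischer route for $L^1$-type completeness: extract a fast subsequence with $d_p(\phi_{n_{k+1}},\phi_{n_k})\le 2^{-k}$, use Tonelli to obtain $\l_I$-a.e.\ pointwise Cauchyness in $(\cP(E),W_p)$, define the limit $\phi$ fiberwise via completeness of the Wasserstein space, and then upgrade subsequential convergence to convergence of the full Cauchy sequence.

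The paper takes a different path: it argues directly from the Cauchy condition, interchanging the double limit in $(m,n)$ with the integral over $I$ by the Bounded Convergence Theorem to conclude that $\lim_{m,n}W_p(\phi_n(\th),\phi_m(\th))=0$ for $\l_I$-a.e.\ $\th$, and then invokes BCT once more for the final convergence $d_p(\phi_n,\phi)\to 0$. Your approach is the more robust textbook argument and buys you a clean justification of every limit--integral exchange; the paper's is shorter in spirit but leans on applications of BCT for which no dominating bound is exhibited. For measurability of the pointwise limit $\phi$, the paper checks explicitly on a generating class of $\cB(\cP(E))$ (namely the sets $\{\rho:\rho(G)\in G_0\}$), whereas you invoke the general principle that a.e.\ pointwise limits of measurable maps into a separable metric target remain measurable; both are valid, and yours is quicker once one accepts that the Wasserstein space is Polish. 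Your additional care about $W_p$ possibly taking the value $+\infty$ (restricting to the $d_p$-finite component around a fixed reference such as $\th\mapsto\delta_0$) addresses a point the paper leaves implicit.
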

\begin{proof} First let us show that $d_p$ is a metric. It suffices to check the triangle inequality: for $\phi_1,\phi_2,\phi_3\in\cM$
    \begin{equation*}
        \begin{aligned}
            d_p(\phi_1,\phi_2)=\int_I W_p(\phi_1(\th),\phi_2(\th))\l_I(d\th)&\leq\int_I W_p(\phi_1(\th),\phi_3(\th))+W_p(\phi_3(\th),\phi_2(\th))\l_I(d\th)\\
            &=d_p(\phi_1,\phi_3)+d_p(\phi_3,\phi_2)
        \end{aligned}
    \end{equation*}

    Next, we need to show that $(\cM, d_p)$ is complete. Let $\{\phi_n\}_{n=1}^\infty\subset\cM$ be a Cauchy sequence in $(\cM, d_p)$, i.e. $\lim_{m,n\to0}d_p(\phi_n,\phi_m)=0$, we need to show that $\lim_{n\to0}d_p(\phi_n,\phi)=0$ for some $\phi\in\cM$. By Bounded Convergence Theorem (BCT), we have:
    $$0=\lim_{m,n\to\infty}d_p(\phi_n,\phi_m)=\lim_{m,n\to\infty}\int_I W_p(\phi_n(\th),\phi_m(\th))\l_I(d\th)=\int_I \lim_{m,n\to\infty}W_p(\phi_n(\th),\phi_m(\th))\l_I(d\th)$$
    Thus $\lim_{m,n\to\infty}d_p(\phi_n(\th),\phi_m(\th))=0$ $\l_I$-almost surely. Since $(\cP(E),W_p)$ is complete, we have $\forall \th\in I$, $\exists\phi_\th\in \cP(E)$ such that $\lim_{n\to\infty}W_p(\phi_\th,\phi_n(\th))=0$. We define function $\phi:I\mapsto\cP(E)$ by $\phi(\th)=\phi_\th$. If $\phi$ is $\l_I-$measurable, then by BCT again, we have
    $$\lim_{n\to\infty}d(\phi_n,\phi)=\lim_{n\to\infty}\int_I W_p(\phi(\th),\phi_n(\th))\l_I(d\th)=\int_I \lim_{n\to\infty}W_p(\phi(\th),\phi_n(\th))\l_I(d\th)=0.$$
    
    Remains to show that $\phi(\th)$ is measurable. Since the Borel $\si-$algebra $\cB(\cP(E))$ is generated by set
    $$\cE=\{G_1=(f^G)^{-1}(G_0):f^G(\rho)=\rho(G) \q \forall \rho\in\cP(E),G\in\cB(E),\q G_0\in \cB(\dbR)\}$$
    It suffices to check $\forall G\in\cB(E)$, $f^G\circ\phi$ is $\cB(I)-$measurable. Since $\phi_n(\cdot)$ is measurable and $f^G$ is continuous, then $f^G\circ\phi_n(\cdot)$ is $\cB(I)-$measurable. Furthermore, $\lim_{n\to\infty}f^G\circ\phi_n=f^G\circ\phi$ point-wise and thus $f^G\circ\phi(\cdot)$ is $\cB(I)-$measurable. Thus for all $G_1\in\cE$, $\phi^{-1}(G_1)=\phi^{-1}\circ(f^G)^{-1}(G_0)=(f^G\circ\phi)^{-1}$ is $\cB(I)-$measurable. Since $\si(\cE)=\cB(\cP(E))$, $\phi$ is $\cB(I)-$measurable.
\end{proof}
\begin{rem}
    For $X\in\dbL_\boxtimes^2(\O\times I;E)$, define function $\phi(\th)=\cL(X^\th)$. We will check that $\phi\in\cM$. Specifically, we want to know if $\phi$ is $\cB(I)-$measurable. By definition, we know the mapping $\phi:\th\mapsto \dbP\circ (X^\th)^{-1}(G)$ is $\cB(I)-$measurable for all $G\in\cB(E)$. By the same argument in the proof of theorem \ref{completeness}, the map $\th\mapsto \dbP\circ (X^\th)^{-1}$ is $\cB(I)-$measurable. 

Furthermore, since $X\in\dbL_\boxtimes^2(\O\times I;E)$, $X_t$ is also $\dbP\boxtimes\cB(I)-$measurable. Thus by the same argument the map $\phi_t:\th\mapsto \dbP\circ (X_t^\th)^{-1}$ is also $\cB(I)-$measurable. For the remainder of the paper, we will denote $\cL(X)=\phi$ and $\cL(X_t)=\phi_t$.
\end{rem}

Let us also define $\cM_0$ as the set of Lebesgue measurable functions $\phi_0:I\mapsto\cP(\dbR^d)$ and metric $d_p^0$ for $p\geq1$. For $\phi_0,\psi_0\in\cM_0$:
\begin{equation}
    \begin{aligned}
        d_p^0(\phi_0,\psi_0)&=\int_I W_2(\phi_0(\th),\psi_0(\th))\l_I(d\th)\\
        &=\int_I \inf_{\rho\in\G(\phi_0(\th),\psi_0(\th))}\big[\dbE^\rho\big(|X_0^\th-Y_0^\th|\big )^p \big]^{1/p}\l_I(d\th)
    \end{aligned}
\end{equation}
where $\G(\phi_0(\th),\psi_0(\th))$ denotes the set of joint probability measure with marginals $\phi_0(\th)$ and $\psi_0(\th)$. The completeness of $(\cM_0,d_p^0)$ is the same as the proof of Theorem \ref{completeness}, but our result doesn't rely on such a result, thus we will not state the theorem separately. However, we will show the following useful lemma.
\begin{lem}
For $\phi,\bar\phi\in\cM$, $t\in[0,T]$ and $p\geq1$, we have: $$d_p(\phi_t,\bar\phi_t)\leq d_p(\phi,\bar\phi) $$
\end{lem}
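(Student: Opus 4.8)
The plan is to identify $\phi_t$ as the pushforward of $\phi$ under time-$t$ evaluation and then exploit that this pushforward contracts the Wasserstein distance. Write $E=C([0,T];\dbR^d)$ and let $e_t:E\to\dbR^d$, $e_t(x)=x(t)$, be the evaluation map, so that for $\phi\in\cM$ the element $\phi_t\in\cM_0$ is given by $\phi_t(\th)=\phi(\th)\circ e_t^{-1}$ for $\l_I$-a.e.\ $\th\in I$ (this is consistent with the Remark: if $\phi=\cL(X)$ then $\phi_t(\th)=\cL(X_t^\th)=\cL(X^\th)\circ e_t^{-1}$). Measurability of $\th\mapsto\phi_t(\th)$ is not an issue, since $\mu\mapsto\mu\circ e_t^{-1}$ is continuous from $(\cP(E),W_p)$ to $(\cP(\dbR^d),W_p)$ and $\phi$ is $\cB_I$-measurable, so $\phi_t\in\cM_0$.

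First I would record the elementary bound that $e_t$ is $1$-Lipschitz: $|e_t(x)-e_t(y)|=|x(t)-y(t)|\le\sup_{s\in[0,T]}|x(s)-y(s)|=\|x-y\|_\infty$ for all $x,y\in E$. Next, fix $\th\in I$. For any coupling $\rho\in\G(\phi(\th),\bar\phi(\th))$, the image measure $\rho\circ(e_t\times e_t)^{-1}$ is a coupling of $\phi_t(\th)$ and $\bar\phi_t(\th)$; hence, using the definition of $W_p$ on $\cP(\dbR^d)$ together with the Lipschitz bound,
$$W_p\big(\phi_t(\th),\bar\phi_t(\th)\big)^p\le\dbE^{\rho}\big[|e_t(X)-e_t(Y)|^p\big]\le\dbE^{\rho}\big[\|X-Y\|_\infty^p\big].$$
Taking the infimum over $\rho\in\G(\phi(\th),\bar\phi(\th))$ yields the pointwise inequality $W_p(\phi_t(\th),\bar\phi_t(\th))\le W_p(\phi(\th),\bar\phi(\th))$, valid for $\l_I$-a.e.\ $\th$.

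Finally, I would integrate this pointwise inequality over $(I,\cB_I,\l_I)$ to obtain
$$d_p(\phi_t,\bar\phi_t)=\int_I W_p\big(\phi_t(\th),\bar\phi_t(\th)\big)\,\l_I(d\th)\le\int_I W_p\big(\phi(\th),\bar\phi(\th)\big)\,\l_I(d\th)=d_p(\phi,\bar\phi),$$
which is the claim. There is no substantive obstacle here; the only points to get right are the identification of $\phi_t$ with the evaluation pushforward and the standard fact that pushing a measure forward by a $1$-Lipschitz map does not increase $W_p$ — this is precisely what forces the time-$t$ marginal measures to be no farther apart than the path-space measures.
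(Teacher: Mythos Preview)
Your proof is correct and follows essentially the same approach as the paper: both use that a coupling of the path-space measures $\phi(\th),\bar\phi(\th)$ induces, via the evaluation map $e_t$, a coupling of the marginals $\phi_t(\th),\bar\phi_t(\th)$, and then exploit $|x(t)-y(t)|\le\|x-y\|_\infty$ before integrating over $I$. Your version is in fact slightly more explicit about the pushforward-coupling step than the paper's, which writes this passage as a bare equality between two infima.
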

\begin{proof}
\begin{equation*}
\begin{aligned}
    d_p(\phi_t,\bar\phi_t)=&\int_I \inf_{\rho\in\G(\phi_t(\th),\bar\phi_t(\th))}\big[\dbE^\rho\big(|X_t^\th-Y_t^\th|\big )^p \big]^{1/p}\l_I(d\th)\\
    =&\int_I \inf_{\rho\in\G(\phi(\th),\bar\phi(\th))}\big[\dbE^\rho\big(|X_t^\th-Y_t^\th|\big )^p \big]^{1/p}\l_I(d\th)\\
    \leq&\int_I \inf_{\rho\in\G(\phi(\th),\bar\phi(\th))}\big[\dbE^\rho\sup_{t\in[0,T]}\big(|X_t^\th-Y_t^\th|\big )^p \big]^{1/p}\l_I(d\th)=d_p(\phi,\bar\phi)
\end{aligned}
\end{equation*}
 \end{proof}

\begin{rem}
    We introduce the space $\cM_0$ for two reasons. First, the equilibrium value $V$ of the HMFG, which we will define later, depends on the initial time, the initial state of the player and the initial population law ensemble $\mu\in \cM_0$. Second, we will define the utility and dynamics as a function of the population marginal law ensemble $\phi_t\in\cM_0$ at time $t\in[0,T]$. 
\end{rem}

Our model concerns the following infinite-dimensional McKean–Vlasov type SDE system:
\begin{equation}\label{sde}
    dX_t^\th=b(\th,t,X_t^\th,\cL(X_t))dt+\si(\th,t,X_t^\th,\cL(X_t))dB_t^\th \q\forall\th\in I
\end{equation}
When $b,\si$ are uniformly Lipschitz continuous in $(x,\mu)$ for all $(\th,t)$, $X$ has a unique solution in $L_\boxtimes^2(\O\times I;C([0,T]; \dbR^d))$. The proof is similar with standard arguments of the well-posedness of SDE, thus we will not show here.

\subsection{Linear Functional Derivative}
Let $X\in L_\boxtimes^2(\O\times I;C([0,T],R))$ be the solution of the infinite-dimensional SDE system \eqref{sde}, we want to find the SDE for infinite-dimensional process $Y_t:=V(\th,t,X_t^\th,\cL(X_t))$. Thus, we need to define a derivative on $\cM_0$ and an It\^o formula for functional of marginal population law ensemble $\cL(X_t)\in\cM_0$. For derivative, we mimic the linear function derivative of function on $\cP(\dbR^d)$.

\begin{defn}
Consider a function $f: \cM_0\mapsto\dbR$. The linear functional derivative of $f$ is a function $\frac{\d f}{\d \mu}: \cM_0\times I\times \dbR\longrightarrow\dbR$ such that for $\mu,\nu\in\cM_0$ 
$$f(\nu)-f(\mu) = \int_0^1\int_I\int_{\dbR}\frac{\d f}{\d \mu}(t\nu+(1-t)\mu,x,\th)(\nu^\th-\mu^\th)(dx)\l_I(d\th)(dt)$$
\end{defn}

\begin{eg}
Define smooth function $\psi:\dbR^d\to\dbR$ and $h:\dbR\to\dbR$, $g(\mu^\th) = \int_{\dbR}\psi(x)\mu^\th(dx)$. Consider $f(\mu) = h(\int_0^1 g(\mu^\th)b(\th)\l_I(d\th))$ then we have:
\begin{equation*}
    \begin{aligned}
        &f(\nu)-f(\mu) \\
        =& \int_0^1 h'\Big(t\int_I g(\nu^{\tl\th})b(\tl\th)\l_I(d\tl\th)+(1-t)\int_I g(\mu^{\tl\th})b(\tl\th)\l_I(d\tl\th)\Big)\Big(\int_I (g(\nu^\th)-g(\mu^\th))b(\th)\l_I(d\th)\Big)dt\\
        =&\int_0^1 h'\Big(\int_I b(\tl\th)\int_\dbR\psi(x)(t\nu^{\tl\th}+(1-t)\mu^{\tl\th})(dx)\l_I(d\tl\th)\Big)\Big(\int_I(b(\th) \int_\dbR \psi(x)(\nu^\th-\mu^\th)(dx))\l_I(d\th)\Big)dt\\
        =&\int_0^1\int_I \int_\dbR h'\Big(\int_0^1 g(t\nu^{\tl\th}+(1-t)\mu^{\tl\th})b(\tl\th)\l_I(d\tl\th)\Big)\psi(x)b(\th)(\nu^\th-\mu^\th)(dx)\l_I(d\th) dt
    \end{aligned}
\end{equation*}
Thus $$\frac{\d f}{\d \mu}(\mu,x,\th) = h'\big(\int_Ig(\mu^{\tl\th})b(\tl\th)\l_I(d{\tl\th})\big)\psi(x)b(\th).$$
Notice that we can think of $f(\mu) = h(\int_I g(\mu^\th)b(\th)d\th)=h(\dbE^{\dbP\boxtimes\l}\phi(X(\o,\th))b(\th))$ where $X(\o,\th)\in L_\boxtimes^2(\O\times I)$ and $\mu^\th=P\circ X^{-1}(\cdot,\th)$. Than the result resemble the derivative rule for $\mu \in\mathcal{P}_2(\dbR^d)$: for $f=h(\dbE[f(X)])$ where $X \sim \mu$, $\frac{\d f}{\d \mu}=h'(\dbE[f(X)])f(x)$.
\end{eg}

\subsection{It\^o formula}
Now we will derive the It\^o Formula for functions of processes in $L_\boxtimes^2(\O\times I;C^1([0,T],R))$.
\begin{thm}\label{ito} Assume function $f:\cM_0\longrightarrow\dbR$ has a linear functional derivative $\frac{\d f}{\d \mu}(\mu,x,\th)$. Let us further assume that $\frac{\d f}{\d \mu}$ has continuous first and second derivative in $x$: $\partial_x\frac{\d f}{\d \mu},\partial_{xx}\frac{\d f}{\d \mu}$. Suppose both derivatives are Lipschitz continuous in $\mu$ with Lipschitz constant $L$. Define infinite-dimensional processes $b,\si\in L_\boxtimes^2(\Omega\boxtimes I)$ then the following SDE has a solution $X\in L_\boxtimes^2(\Omega\boxtimes I;C^1([0,T],R))$:
$$dX_t^\th=b_t^\th dt+\sigma_t^\th dW_t^\th$$
where $\mu_t^\th = \mathcal{L}({X_t^\th})$, we have:
\begin{equation}
        f(\mu_T)-f(\mu_0)  = \int_{I}\Big[\int_0^T\dbE\Big(\partial_x\frac{\d f}{\d \mu}(\mu_t,X_t^{\th},\th) b_t^{\th}+\frac{1}{2}\dbE\partial_{xx}\frac{\d f}{\d \mu}(\mu_{t},X_t^{\th},\th)\Big)|\si_t^{\th}|^2 dt\Big]d\th 
\end{equation}
\end{thm}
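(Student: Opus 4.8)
The plan is to run the classical time-discretization proof of It\^o's formula, reducing each step to the one-dimensional It\^o formula applied to $x\mapsto\frac{\d f}{\d\mu}(\mu,x,\th)$ with the measure argument \emph{frozen}, and then to remove the discretization by exploiting the $\mu$-Lipschitz continuity of $\partial_x\frac{\d f}{\d\mu},\partial_{xx}\frac{\d f}{\d\mu}$ together with the continuity of the marginal flow $t\mapsto\mu_t$ in $d_p^0$. Existence of $X$ is not really a difficulty here: since $b,\si$ do not depend on $X$, one simply sets $X_t^\th=X_0^\th+\int_0^t b_r^\th\,dr+\int_0^t\si_r^\th\,dW_r^\th$, which lies in $L_\boxtimes^2(\O\times I;C([0,T],\dbR))$ by the Burkholder--Davis--Gundy inequality, with $\th$-measurability inherited from the Fubini extension of Theorem \ref{fubiniEx}.

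\emph{Steps 1--2 (telescoping, linear functional derivative, one-dimensional It\^o).} Fix a partition $0=t_0<\dots<t_n=T$ with mesh $|\pi|\to0$ and write $f(\mu_T)-f(\mu_0)=\sum_k\big[f(\mu_{t_{k+1}})-f(\mu_{t_k})\big]$. Applying the definition of $\frac{\d f}{\d\mu}$ to each increment with $\nu=\mu_{t_{k+1}},\ \mu=\mu_{t_k}$, abbreviating $\mu_{t_k}^s:=s\mu_{t_{k+1}}+(1-s)\mu_{t_k}\in\cM_0$, and using $\int_{\dbR}\frac{\d f}{\d\mu}(\mu_{t_k}^s,x,\th)(\mu_{t_{k+1}}^\th-\mu_{t_k}^\th)(dx)=\dbE\big[\frac{\d f}{\d\mu}(\mu_{t_k}^s,X_{t_{k+1}}^\th,\th)-\frac{\d f}{\d\mu}(\mu_{t_k}^s,X_{t_k}^\th,\th)\big]$, we get
\[
f(\mu_{t_{k+1}})-f(\mu_{t_k})=\int_0^1\!\!\int_I\dbE\Big[\frac{\d f}{\d\mu}(\mu_{t_k}^s,X_{t_{k+1}}^\th,\th)-\frac{\d f}{\d\mu}(\mu_{t_k}^s,X_{t_k}^\th,\th)\Big]\l_I(d\th)\,ds .
\]
Now for fixed $s,\th$ and frozen $\mu=\mu_{t_k}^s$ apply the classical It\^o formula to the $C^2$ map $x\mapsto\frac{\d f}{\d\mu}(\mu,x,\th)$ along $dX_t^\th=b_t^\th\,dt+\si_t^\th\,dW_t^\th$ and take $\dbP$-expectation so that the $dW^\th$-stochastic integral drops out; this identifies the inner bracket with $\dbE\int_{t_k}^{t_{k+1}}\big[\partial_x\frac{\d f}{\d\mu}(\mu_{t_k}^s,X_t^\th,\th)b_t^\th+\frac12\partial_{xx}\frac{\d f}{\d\mu}(\mu_{t_k}^s,X_t^\th,\th)|\si_t^\th|^2\big]dt$. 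Summing over $k$ and using the Fubini extension to interchange $\int_I$ and $\dbE$,
\[
f(\mu_T)-f(\mu_0)=\sum_k\int_0^1\!\!\int_I\dbE\!\int_{t_k}^{t_{k+1}}\!\Big[\partial_x\frac{\d f}{\d\mu}(\mu_{t_k}^s,X_t^\th,\th)b_t^\th+\frac12\partial_{xx}\frac{\d f}{\d\mu}(\mu_{t_k}^s,X_t^\th,\th)|\si_t^\th|^2\Big]dt\,\l_I(d\th)\,ds .
\]

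\emph{Step 3 (removing the discretization).} First, $t\mapsto\mu_t$ is uniformly continuous on $[0,T]$ in $d_p^0$: for $p=2$, $W_2(\mu_t^\th,\mu_s^\th)^2\le\dbE|X_t^\th-X_s^\th|^2\le C\big(|t-s|\,\dbE\!\int_s^t|b_r^\th|^2dr+\dbE\!\int_s^t|\si_r^\th|^2dr\big)$, and integrating in $\th$ (Jensen) with $b,\si\in L_\boxtimes^2$ forces $d_2^0(\mu_t,\mu_s)\to0$; the general $p\ge1$ case is analogous. Joint convexity of $W_p^p$ gives $W_p(\mu_{t_k}^s,\mu_{t_k})\le W_p(\mu_{t_{k+1}},\mu_{t_k})$, hence $d_p^0(\mu_{t_k}^s,\mu_t)\le d_p^0(\mu_{t_{k+1}},\mu_{t_k})+d_p^0(\mu_{t_k},\mu_t)$ is bounded, uniformly over $k$, $s\in[0,1]$ and $t\in[t_k,t_{k+1}]$, by a quantity $\eta(|\pi|)\to0$. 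By the $L$-Lipschitz continuity in $\mu$ of $\partial_x\frac{\d f}{\d\mu}$ and $\partial_{xx}\frac{\d f}{\d\mu}$, the discretized integrand differs from $\partial_x\frac{\d f}{\d\mu}(\mu_t,X_t^\th,\th)b_t^\th+\frac12\partial_{xx}\frac{\d f}{\d\mu}(\mu_t,X_t^\th,\th)|\si_t^\th|^2$ by at most $L\,\eta(|\pi|)\big(|b_t^\th|+\frac12|\si_t^\th|^2\big)$; since $\int_I\dbE\int_0^T(|b_t^\th|+|\si_t^\th|^2)\,dt\,\l_I(d\th)<\infty$ by Cauchy--Schwarz and $b,\si\in L_\boxtimes^2$, dominated convergence lets $|\pi|\to0$ and yields the asserted identity, with the $\th$-integral outside and the It\^o correction carrying $|\si_t^\th|^2$.

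\emph{Main obstacle.} The delicate part is Step 3: establishing the $d_p^0$-continuity of the marginal flow, controlling the interpolants $\mu_{t_k}^s$, and, above all, producing a $\th$-uniform dominating function so that dominated convergence applies after integration over $I$; one must also keep checking that every integrand (e.g. $\partial_x\frac{\d f}{\d\mu}(\mu_t,X_t^\cdot,\cdot)$) is $\cB_I$-measurable, so that $\int_I$ and the Fubini-extension interchange of $\dbE$ with $\int_I$ are legitimate. A secondary gap lies in Step 2: the stated hypotheses constrain $\partial_x\frac{\d f}{\d\mu},\partial_{xx}\frac{\d f}{\d\mu}$ only through their $\mu$-Lipschitz constant, so to discard the stochastic integral one should either impose a growth bound on $\partial_x\frac{\d f}{\d\mu}$ in $x$ or insert a localizing sequence $\tau_m\uparrow T$ and pass to the limit in $m$ at the end.
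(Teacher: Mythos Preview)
Your proposal is correct and follows essentially the same route as the paper: telescope along a partition, apply the linear functional derivative to each increment, freeze the measure argument and invoke the classical one-dimensional It\^o formula, then let the mesh go to zero using the $\mu$-Lipschitz continuity of $\partial_x\frac{\d f}{\d\mu}$ and $\partial_{xx}\frac{\d f}{\d\mu}$. Your Step~3 is in fact more careful than the paper's (you control the interpolants via joint convexity of $W_p^p$ and spell out the dominating function, whereas the paper bounds $W_2$ of the mixture by a somewhat informal coupling), and your flagged caveats about localizing the stochastic integral and checking $\cB_I$-measurability are legitimate technical points that the paper glosses over.
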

\begin{proof}
Let $0=t_0<t_1,......<t_N=T$ 
\begin{equation*}
    \begin{aligned}
        &f(\mu_T)-f(\mu_0)\\ 
        =& \sum_{i=0}^{N-1}f(\mu_{t_{i+1}})-f(\mu_{t_i})\\
        =&\sum_{i=0}^{N-1} \int_0^1\int_{I}\Big(\dbE\frac{\d f}{\d \mu}(\l\mu_{t_{i+1}}+(1-\l)\mu_{t_i},X_{t_{i+1}}^{\th},\th)-\dbE\frac{\d f}{\d \mu}(\l\mu_{t_{i+1}}+(1-\l)\mu_{t_i},X_{t_{i}}^{\th},\th)\Big)\l_I(d\th) d\l\\
    \end{aligned}
\end{equation*}
Now apply the standard It\^o formula:
\begin{equation}
    \begin{aligned}
    f(\mu_T)-f(\mu_0)
        =&\sum_{i=0}^{N-1} \int_0^1\int_{I}\Big(\Big[\int_{t_i}^{t_{i+1}}\dbE\partial_x\frac{\d f}{\d \mu}(\l\mu_{t_{i+1}}+(1-\l)\mu_{t_i},X_t^{\th},\th)b_t^{\th}dt\\
        &+\frac{1}{2}\int_{t_i}^{t_{i+1}}\dbE\partial_{xx}\frac{\d f}{\d \mu}(\l\mu_{t_{i+1}}+(1-\l)\mu_{t_i},X_t^{\th},\th)|\si_t^{\th}|^2 dt\Big]\Big)\l_I(d\th) d\l\\
        =& \int_{{I}}\Big[\sum_{i=0}^{N-1}\int_{t_i}^{t_{i+1}}\Big(\int_0^1\dbE\partial_x\frac{\d f}{\d \mu}(\l\mu_{t_{i+1}}+(1-\l)\mu_{t_i},X_t^{\th},\th)d\l\Big) b_t^{\th}\\
        &+\frac{1}{2}\Big(\int_0^1\dbE\partial_{xx}\frac{\d f}{\d \mu}(\l\mu_{t_{i+1}}+(1-\l)\mu_{t_i},X_t^{\th},\th)d\l\Big)|\si_t^{\th}|^2 dt\Big]\l_I(d\th)\\
        :=&I_1^N+I_2^N
    \end{aligned}
\end{equation}
Then we take $N$ to infinity. Here we only show the convergence of $I_1^N$ as $I_2^N$ follows the same way:  
    \begin{equation}
        \begin{aligned}
            &\lim_{N\to\infty}\|I_1^N-\int_I\dbE^{\mu_t^{\th}}\int_0^T\partial_x\frac{\d f}{\d \mu}(\mu_{t},X_t^{\th},\th)b_t^{\th}dtd\th\|_2^2\\
            =&\lim_{N\to\infty}\int_I\dbE^{\mu_t^{\th}}\sum_{i=0}^{N-1}\int_{t_i}^{t_{i+1}}\Big|\Big(\int_0^1\partial_x\frac{\d f}{\d \mu}(\l\mu_{t_{i+1}}+(1-\l)\mu_{t_i},X_t^{\th},\th)d\l\Big)  - \partial_x\frac{\d f}{\d \mu}(\mu_{t},X_t^{\th},\th)b_t^{\th}\Big|^2 dt\l_I(d\th)\\
            \leq& L\lim_{N\to\infty}\sum_{i=0}^{N-1}\int_{t_i}^{t_{i+1}}\Big(\int_0^1 W_2(\l\mu_{t_{i+1}}+(1-\l)\mu_{t_i}, \mu_{t}) d\l\Big) |b_t^{\th}|^2dt\l_I(d\th)\\
            \leq& L\lim_{N\to\infty}\sum_{i=0}^{N-1}\int_{t_i}^{t_{i+1}}\Big(\int_0^1\int_{I} \dbE|\l(X_{t_{i+1}}^{\bar\th}+(1-\l)X_{t_{i}}^{\bar\th}-X_{t}^{\bar\th}|^2d\bar\th d\l\Big) |b_t^{\th}|^2dt\l_I(d\th)\\
        \end{aligned}
    \end{equation}
    The first inequality holds by the Lipschitz continuity of $\pa_x\frac{\d f}{\d\mu}$. The last equality holds by the definition of $W_2$ norm. Notice that
    $X\in L_\boxtimes^2(\Omega\boxtimes\mathcal{I};C^1([0,T],R))$, thus we can apply DCT the conclude that \eqref{ito} converge to 0. 
\end{proof}

\section{Heterogeneous Mean Field Games}\label{sect-HMFG}
\subsection{Heterogeneous N-Player Game}
In this section, we will formulate the N-player game model that motivates HMFG. Let us consider a model with $N$ players. The players are distributed over a graph $G_K$ with $K$ nodes. Let $\cC_l$ denote the index set of players on node $l$. and $N=\sum_{l=1}^K|\cC_l|$. We assume that the players on the same node are homogeneous and players on the different nodes are heterogeneous. We call players on node $i$ as type $i$ player. 

Denote $A$ as the action space. Denote $\cA^N$ as the set of admissible controls for $N-$player game and $\cA$ as the set of admissible controls for the infinite-player game which we will define in the next subsection. In this paper, we consider closed-loop admissible controls. Namely, $\cA^N$ contains $N-$dimensional measurable functions $\a$ where $\a^i:[0,T]\times\dbR\mapsto A$ for $i=1,......,N$. In addition, we need $\a^i$ to be Lipschitz continuous on $x$ so that the SDE is well-posed. Similar conditions should hold for the set of admissible control ensemble $\cA$ and we postpone the definition of $\a\in\cA$ to the next subsection. 

Define functions $b:I\times[0,T]\times\dbR\times\cM_0\times A\mapsto\dbR$ and $\si:I\times[0,T]\times\dbR\times\cM_0\mapsto\dbR$. We need the following standing assumptions for the remainder of the paper to guarantee the well-posedness of the population dynamic. 
\begin{assum}\label{lipschitz}
 (i) For all $x_1,x_2\in\dbR$ $\mu_1,\mu_2\in\cM_0$, $a_1,a_2\in A$ and all $t\in[0,T]$, :
    $$|b(\th,t,x_1,\mu_1,a_1)-b(\th,t,x_2,\mu_2,a_2)|\leq L\big(|x_1-x_2|+d_1(\mu_1,\mu_2)+|a_1-a_2|\big)$$
    $$|\si(\th,t,x_1,\mu_1)-\si(\th,t,x_2,\mu_2)|\leq L\big(|x_1-x_2|+d_1(\mu_1,\mu_2)\big)$$
    for some Lipschitz constant $L$;\\
    (ii)There exists $\mu\in\cM_0$ such that :
    $$\sup_{\th,t,a}|b(\th,t,0,\mu,a)|+|\si(\th,t,0,\mu)|\leq L.$$
\end{assum}
\begin{rem}
    For simplicity, we consider 1-dimensional dynamic for each player in this paper. Our theory can be easily extended to d-dimensional dynamics. 
    
    Furthermore, the $N-$player games are formulated with $b,\si$ defined on the index space $I$ instead of $\{1,.....,N\}$ in order to compare with HMFG in Section \ref{sect-convergence}. One can easily state a equivalent $N-$player game with functions $b$ and $\si$ defined on the discrete index space.
\end{rem}

Let $W$ be a standard $N$-dimensional Brownian motion on filtered probability space $(\O,\cF,\dbP)$. Given initial conditions $(t,\bar x)\in[0,T]\times\dbR^N$, where $\bar x=(x_1,.......,x_n)$if the players use control $\a^N\in\cA^N$, the dynamic of the player $i\in\cC_l$ is the following
\begin{equation}\label{npd}
    X_{s,i}^{N,t,\bar x,\a^N}= x_i+\int_t^s b(\frac{l}{K},r,X_{r,i}^{N,t,\bar x,\a^N},\phi_r^{N},\a_{i,r}^{N})dr+\int_t^s \si(\frac{l}{K},r,X_{r,i}^{N,t,\bar x,\a^N},\phi_r^{N})dW_r^i
\end{equation}
where we denote $\a_{i,r}^{N}:=\a_{i}^{N}(r,X_{s,i}^{N,t,\bar x,\a^N})$ for simplicity and for each $\th\in I$,
$$\phi_s^{N}(\th)=\frac{1}{|\cC_{\left \lceil{\th K}\right \rceil }|}\sum_{j\in\cC_{\left \lceil{\th K}\right \rceil }}\d_{X_{s,j}^{N,t, \bar x,\a^N}}.$$
Notice that $\phi_s^{N}(\th)$ is a $\cP(\dbR)$ valued random variable and $\phi_s^{N}(\cdot)(\o)\in\cM_0$. The measurability is trivial since $\phi_s^{N}(\cdot)(\o)$ is a step function of $\th$.

Now let us define the utility function. The utility functions will be defined on $I$ instead of a discrete index space for the same reason as above. Let $F:I\times[0,T]\times\dbR\times\cM_0\times A\mapsto\dbR$ be the running cost function and $G:I\times\dbR\times\cM_0\mapsto\dbR$ be the terminal cost function. Throughout this paper, both $F$ and $G$ follow the standing assumptions below:
\begin{assum}\label{costassum}
    (i) F and G are uniformly Lipschitz continuous in $(x,\mu)$, i.e.: $\forall x_1,x_2\in\dbR$ $\mu_1,\mu_2\in\cM_0$,
    $$|F(\th,t,x_1,\mu_1,\a^\th)-F(\th,t,x_2,\mu_2,a)|\leq L(|x_1-x_2|+d_1(\mu_1,\mu_2))\qquad\forall(\th,t,a)\in I\times[0,T]\times A$$
    $$|G(\th,x_1,\mu_1)-G(\th,x_2,\mu_2)|\leq L(|x_1-x_2|+d_1(\mu_1,\mu_2))\qquad\forall\th\in I$$
    (ii)There exists $\mu\in\cM_0$:
    $$\sup_{\th,t,a}|F(\th,t,0,\mu,a)|+|G(\th,0,\mu)|\leq L$$
\end{assum} The payoff of player $i$ is:
\begin{equation}\label{npp}
    J^N(i,t,\bar x,\b^i,\a^{N,-i})=\dbE[G(\frac{l}{k},X_{s,i}^{N,t,\bar x,\b^i,\a^{N,-i}},\phi^{N}_T)+\int_t^T F(\frac{l}{k},s,X_{s,i}^{N,t,\bar x,\b^i,\a^{N,-i}},\phi_s^{N},\b_s^{i})ds]
\end{equation}
where $\b^i,\a^{N,-i}$ denote every control in vector $\a^N$ except for $i-th$ term is $\b^i$. Then the $N-$player game is defined in the usual way.
\begin{defn}{\textbf{($N-$Player Game)}}\label{nplayer}
    Given initial conditions $(t,\bar x)\in[0,T]\times\dbR^N$, a \textbf{$N-$player game system} is formulated with dynamic \eqref{npd} and payoff \eqref{npp}.
    A \textbf{Nash equilibrium (NE)} of the $N-$player game system is the control $\a^{*N}\in\cA^N$ such that for all $i=1,...,N$:
$$J^N(i,t,\bar x,\a^{*N})\geq J^N(i,t,\bar x,\b^i,\a^{*N,-i})\quad \forall\b^i\in\cA_i.$$
Denote $V^N(i,t,\bar x)=J^N(i,t,\bar x,\a^{*N})$ as the game value of player $i$.
\end{defn}
\begin{rem}
    Typically, the existence and the uniqueness of the equilibrium of the nonzero-sum $N-$player games are hard to analyze, especially when $N$ is large. To overcome the difficulty, people often consider the mean field model and use the mean field equilibrium as an approximation of the N-player game. It is often called the propagation of chaos and is shown in, e.g., \cite{cardaliaguet2019master, delarue2018}. However, this approach requires the homogeneous structure of the $N-$player games Nash equilibrium. The goal of this project is to propose a new heterogeneous mean field model that can describe the limit of the $N-$player game when $N$ goes to infinity. The relation between the Heterogeneous mean field model and the $N-$player games will be discussed in Section \ref{sect-convergence}.
\end{rem}
\begin{rem}
    Inspired by \cite{caines2019graphon}, we consider $N-$player games with $K-$types as the corresponding model of HMFG, unlike most GMFG models, see e.g. \cite{aurell2021stochastic,GMFG1,cao2025,Crucianelli24}, where each node of the graph represents a single player. Such structure works in GMFG setting because when taking $N\to\infty$, the model dependence on population states naturally becomes the dependence on population law ensemble thanks to the ELLN. However, our model relaxed from the aggregation term, the population law ensemble arises from the mean field limit of each type as number of players on each type and the number of types both go to infinity.
\end{rem}

\subsection{Heterogeneous Mean Field Games}
In this section, we formally define the Heterogeneous Mean Field Games (HMFG). Let $I=[0,1]$ denote the index set of uncountably many types of players. Each player depends only on the law ensemble of other types of players. Assume that within each type, the players use the same strategy, follow the same dynamic, and thus have the same law. Thus for each type of player, we only consider a single mean field dynamic that represents all the players of the same type. For simplicity, we call mean field dynamics of type $\th$ players as the player $\th$.

Consider the extended probability space $(\O\times I,\cF\boxtimes\cB_I,\dbP\boxtimes\l)$ introduced in Theorem \ref{fubiniEx} and the e.p.i. Brownian motion $B$ defined on it. In addition, $B$ is also independent with $W$ in the $N$ player game. Denote $\dbF^\th$ as the filtration generated by $B^\th$ and $\dbF$ as the filtration ensemble $\{\dbF^\th\}_{\th\in I}$. Next, we define the admissible strategy set.
\begin{defn}
    (i) Define the admissible closed-loop strategy for $\th$ as the function $\a^\th:[0,T]\times\dbR\to A$ such that $\a^\th(t,x)$ is uniformly Lipschitz continuous on $x$ with some Lipschitz constant $L_\a$, $\forall\th\in I,~t\in[0,T]$. Denote the set of admissible strategies for player $\th$ as $\cA^\th$;\\
    (ii) Define the admissible strategy ensemble as $\a=\{\a^\th\}_{\th\in I}$ where $\a^\th\in\cA^\th$ such that $\a^\cdot(t,x)$ is $\l_I-$measurable $\forall(t,x)\in[0,T]\times\dbR$. Denote the set of admissible strategy ensembles as $\cA$.
\end{defn}

Next we define the dynamics of the players. Function $b$ and $\si$ are the same as in last subsection. Given an initial time, an initial distribution and an admissible strategy profile $(t,\xi,\a)$, under Assumption \ref{lipschitz}, the following SDE ensemble has a unique solution in $L_\boxtimes^2(\Omega\boxtimes\mathcal{I};C([0,T],\dbR))$:
\begin{equation}\label{population}
    X_{s,\th}^{t,\xi,\a}=\xi^\th+\int_t^s b(\th,r,X_{r,\th}^{t,\xi,\a},\rho_r ^{t,\xi,\a},\alpha_r^\th)dr + \int_t^s \si(\th,r,X_{r,\th}^{t,\xi,\a},\rho_r ^{t,\xi,\a})dB_r^\th,\q\forall\th\in I,
\end{equation}
where $\rho_s ^{t,\xi,\a}:=\cL(X_s^{t,\xi,\a})$, and $X_{s,\th}^{t,\xi,\a}$ is the dynamic of player $\th\in I$. We can take $\xi^\th\equiv x^\th\in \dbR$ indicating deterministic initial positions. Given the population law ensemble, individual player can employ his own strategy with different initial states $\z^\th$ and strategy $\b^\th\in\cA^\th$:
\begin{equation}
    X_{s,\th}^{t,\z^\th,\xi,\b^\th,\a}=\z^\th+\int_t^s b(\th,r,X_{r,\th}^{t,\z^\th,\xi,\b^\th,\a},\rho_r ^{t,\xi,\a},\b_r^\th)dr + \int_t^s \si(\th,r,X_{r,\th}^{t,\z^\th,\xi,\b^\th,\a},\rho_r ^{t,\xi,\a})dB_r^\th
\end{equation}

Next, we define the utility function. Let the same $F$ and $G$ from the previous subsection be the running cost function and the terminal cost function in HMFG. 
\begin{defn}
    Denote $J(\th,t,x,\b^\th;\a,\xi)$ as the payoff of player $\th$ starting at time $t$ and position $x$ when the strategy of player $\th$ is $\b^\th\in\cA^\th$ and the initial state ensemble of population is $\xi\in\dbL_{\boxtimes}^2(\O\times I,\cF_t)$ and the strategy ensemble of population is $\a\in\cA$:
\begin{equation}\label{control}
    J(\th,t,x,\b^\th; \a, \xi)=\dbE\Big[G(\th,X_{T,\th}^{t,x,\xi,\b^\th,\a},\rho_T ^{t,\xi,\a})+\int_t^T F(\th,s,X_{s,\th}^{t,x,\xi,\b^\th,\a},\rho_s ^{t,\xi,\a},\b_s^\th)ds\Big]
\end{equation}
\end{defn}

Since $J(\th,t,x,\b^\th; \a, \xi)$ only depends on $\xi$ through the law of $\cL(X_s^{t,\xi,\a})$, we can take any initial population state $\xi'\in\dbL_\boxtimes^2(\O\times I;\cF_t)$ such that $\cL(\xi')=\cL(\xi)$ and have $\cL(X_s^{t,\xi,\a})=\cL(X_s^{t,\xi',\a})$. This implies that $J(\th,t,x,\b^\th, \a, \xi)=J(\th,t,x,\b^\th, \a, \xi')$. Thus it suffices to define 
$$J(\th,t,x,\b^\th; \a, \mu):=J(\th,t,x,\b^\th; \a, \xi)\quad\forall\xi\in\dbL_\boxtimes^2(\O\times \cI;\cF_t)\text{  such that  }\cL(\xi)=\mu$$
Since $J$ depends on $\a, \mu$ through $\cL(X^{t,\xi,\a})$, we can also denote $$J(\th,t,x,\b^\th;\cL(X_{[t,T]}^{t,\xi,\a}))=J(\th,t,x,\b^\th; \a, \rho).$$
$J(\th,t,x,\b^\th;\cL(X_{[t,T]}^{t,\xi,\a}))$ also indicate that when the population law ensemble is fixed, the player $\th$ solves an optimal control problem. For a fixed population strategy ensemble $\a\in\cA$ and initial law ensemble $\mu\in\cM_0$, the optimal payoff of player $\th$ is
$$V(\th,t,x;\a,\mu)=\sup_{\b^\th\in\cA^\th}J(\th,t,x,\b^\th; \a, \mu)$$
Then we define the Nash Equilibrium of a heterogeneous mean field game as the following.
\begin{defn}\label{GMFGNE}
    \textbf{(Heterogeneous Mean Field Game)} Given initial conditions $(t,x)$ of player $\th$ and initial law ensemble of the population $\rho$, a Heterogeneous mean field game is formulated with dynamic \eqref{population} and payoff \eqref{control}.
    
    A control $\a^{*}\in\cA$ is a Nash Equilibrium (NE) of the HMFG if:
$$J(\th,t,x,\a^{\th*};\a^*, \rho)\geq J(\th,t,x,\b^\th;\a^*, \rho)\qquad \forall \b^\th\in\cA^\th\qquad\forall\th\in I,$$
and the corresponding equilibrium payoff of the HMFG is:
$$V(\th,t,x;\a^*,\mu):=\sup_{\b^\th\in\cA^\th}J(\th,t,x,\b^\th; \a^*, \rho)=J(\th,t,x,\a^{\th*},\a^*, \rho).$$ 
We denote the game value of the HMFG as $V(\th,t,x,\mu):=V(\th,t,x;\a^*,\mu)$
\end{defn}
\subsection{HMFG System: PDE Formulation}
In Definition \ref{GMFGNE}, the equilibrium of HMFG is defined as a fixed point of admissible strategy ensemble. We can also define the equilibrium of HMFG as a fixed point on population law ensemble.
\begin{defn}\label{fixedlaw}
Given initial conditions $(t,x)$ and the law ensemble of the population $\mu$, we say $\rho\in\cM$ is an equilibrium of HMFG if:

1) For a fixed population law ensemble $\rho\in\cM$ , where $\rho_t=\mu$, each player solves the optimize the payoff $J(\th,t,x,\b^\th; \rho)$ and finds optimal control $\a^{\th*}$;

2) For a given admissible strategy $\a^{*}\in\cA$, $\cL(X_{\th}^{t,\mu,\a^{*}})=\rho_\th$.
\end{defn}
\begin{rem}
    Definition \ref{GMFGNE} and \ref{fixedlaw} are equivalent. The benefit of Definition \ref{fixedlaw} which defines the equilibrium as a fixed point on population law ensemble is that when each player solves an optimal control problem given population law ensemble, there can be multiple optimal controls. However, as we will show later, the population law ensemble when applying the optimal strategies is unique. The uniqueness of equilibrium that we show in the paper will be the uniqueness of fixed point $\rho\in\cM$.
\end{rem}

Similar to MFG, we can also derive an infinite-dimensional PDE system. First, let us define 
$$h(\th,t,x,p,\rho,a^\th):=b(\th,t,x,\rho,a^\th)p+F(\th,t,x,\rho,a^\th)$$
and Hamiltonian:
$$H(\th,t,x,p,\rho):=\sup_{a^\th\in A}h(\th,t,x,p,\rho,a^\th)$$
Fix population law ensemble $\bar \rho_{[t,T]}\in\cM$. For each player $\th$ starting at $(t,x)$, we need to solve an optimal control problem. Applying standard approach in the optimal control theory, we have an HJB equation for each $\th\in I$:
\begin{equation}\label{HFixedlaw}
    \begin{cases}
        \partial_t u_\th^{\bar\rho} + \frac{1}{2}|\si(\th,t,x,\bar\rho)|^2\partial_{xx} u_\th^{\bar\rho}+H(\th,t,x,\partial_x u_\th^{\bar\rho},\bar \rho_t)=0\\
        u_\th^{\bar\rho}(T,x) = G(\th, x, \bar \rho_T)\qquad u_\th^{\bar\rho}(s,x)\in[t,T]\times\dbR
    \end{cases}    
\end{equation}
where $u_\th^{\bar\rho}(t,x)=\sup_{\b^\th} J(\th,t,x;\bar\rho_{[t,T]})$. For $u_\th^{\bar\rho}(t,x)$ to have a classical solution we need to following assumptions.
\begin{assum}\label{c2}
    (i) $\si(\th,s,\cdot,\mu)\in C_b^2$: for all $(\th,s,\mu)$, $\si$ is continuously differentiable over $x$ and $\si,\pa_x \si$ are uniformly Lipschitz with coefficient $L$ and $|\si(t,0)|,|\pa_x \si(t,0)|$ are uniformly bounded with constant $L$.\\
    (ii) $|\si(\th,s,x,\mu)|\geq \frac{1}{L}$ for all $(\th,s,x,\mu)$.\\
    (iii) $H$ and $\pa_pH$ is uniformly Lipschitz in $(x,p)$ with Lipschitz constant $L$.
\end{assum}
Then by Theorem 2.4.1 in \cite{ZhangThesis}, $u_\th^{\bar\rho}$ has a classical solution in $C_b^2$ and $|\pa u_\th^{\bar\rho}|\leq C$ for some $C$ only depends on $T$ and $L,$. 

Suppose $\exists \a^*\in\cA$ such that 
\begin{equation}\label{hNE}
    h(\th,t,x,\pa_x u_\th^{\bar\rho},\bar\rho_t,a^{*\th}(t,x))=H(\th,t,x,\pa_x u_\th^{\bar\rho},\bar\rho_t)\q\forall(t,x)\in[0,T]\times\dbR\q\forall\th\in I
\end{equation}
Since $b$, $F$, $\pa u_\th^{\bar\rho}$ are continuous, by classical envelope theorem, see e.g. Corollary 299 in \cite{Border}, we have 
\begin{equation}\label{bpH}
    b(\th,t,x,\bar\rho,a^{*\th}(t,x))=\pa_pH(\th,t,x,\pa_x u_\th^{\bar\rho},\bar\rho_t).
\end{equation}
and the dynamic of $X_{t,\th}^{t,\xi,\a^{*}}$:
\begin{equation}\label{dynamicfixlaw}
\begin{aligned}
    X_{s,\th}^{t,\xi,\a^{\th*}}=&\xi+\int_t^s \pa_p H(\th,r,X_{r,\th}^{t,\xi,\a^{*\th}},\pa_xu_\th^{\bar\rho}(r,X_{r,\th}^{t,\xi,\a^{*\th}}),\bar\rho_r)dr + \int_t^s \si(\th,r,X_{r,\th}^{t,\xi,\a^{*\th}},\bar\rho_r)dB_r^\th
\end{aligned}
\end{equation}
Denote the law $\rho_t^\th=\mathcal{L}(X_{t,\th}^{t,\xi,\a^{\th*}})$. We can also derive the forward PDE through Fokker Plank equations. Since $\rho$ is a equilibrium, we have $\rho\equiv\bar\rho$. Combining with the HJB equation \eqref{HFixedlaw} above, we get a coupled forward backward PDE (FBPDE)
\begin{equation}\label{fbpde}
    \begin{cases}
        \partial_t u_\th+\frac{1}{2}|\si(\th,s,x,\rho_s)|^2\partial_{xx} u_\th+H(\th,s,x,\partial_x u_\th, \rho_s)=0\\
        \partial_t\rho_\th +\partial_x(\partial_p H(\th,s,x,\partial_x u_\th,\rho_s)\rho_{s,\th})-\frac{1}{2}\partial_{xx}(|\si(\th,s,x,\rho_s)|^2\rho_{t,\th})=0\\
        u_\th(T,x) = G(\th, x, \rho_T),\rho_{s,\th}=\mu^\th,\quad \forall s\in[t,T],\quad\forall\th\in I.
    \end{cases}    
\end{equation}
Notice that for each player $\th$, the solution $(u_\th,\rho^\th)$ pair depends on all the other players through $\rho$, thus the FBPDE system above is an coupled infinite-dimensional system. 
\begin{defn}\label{pdesol}
    Define a solution of FBPDE \eqref{fbpde} to be the pair $(u,\rho)$ such that for all $\th\in I$, $u^\th$ is a classical solution of the HJB equation in \eqref{fbpde} and $\rho$ is a distributional solution of the Fokker-Planck equation in \eqref{fbpde}.
\end{defn}
Furthermore, we have the following characterization theorem.
\begin{thm}\label{pdewellposed}
    Assume Assumption \ref{c2} holds. Given initial time $t$ and population initial law ensemble $\mu$, the game has an equilibrium $\rho\in\cM$ if and only if the FBPDE system \eqref{fbpde} has a solution and there exists an optimizer $\a^{*}(t,x)\in\cA$ such that \eqref{hNE} hold. Furthermore $V(\th,t,x,\mu)=u_\th(t,x)$.
\end{thm}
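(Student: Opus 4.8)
The plan is to exploit the fact that, once the population law ensemble is frozen at some $\bar\rho\in\cM$, the HMFG decouples into a family (indexed by $\th\in I$) of \emph{classical} stochastic control problems; the theorem then follows by combining, for each $\th$, the verification correspondence between stochastic control and the HJB equation with the correspondence between the controlled SDE and its Fokker--Planck equation, glued together by the consistency requirement $\rho=\bar\rho$.

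\textbf{($\Rightarrow$)} Let $\rho\in\cM$ be an equilibrium in the sense of Definition \ref{fixedlaw}, with associated NE $\a^*\in\cA$, and freeze $\bar\rho=\rho$. By Assumption \ref{c2} and Theorem 2.4.1 of \cite{ZhangThesis}, for each $\th$ the HJB equation \eqref{HFixedlaw} admits a classical solution $u_\th^{\bar\rho}$ with $|\pa u_\th^{\bar\rho}|\le C$; measurability of $\th\mapsto u_\th^{\bar\rho}$ follows from uniqueness of this solution together with measurability in $\th$ of the coefficients. Applying It\^o's formula to $s\mapsto u_\th^{\bar\rho}\big(s,X_{s,\th}^{t,x,\xi,\b^\th,\a^*}\big)$, using the HJB equation and the bound on $\pa u_\th^{\bar\rho}$ to discard the martingale term, yields the verification inequality $u_\th^{\bar\rho}(t,x)\ge J(\th,t,x,\b^\th;\bar\rho_{[t,T]})$ for every $\b^\th\in\cA^\th$, with equality when $\b^\th$ realizes the Hamiltonian supremum along its own trajectory. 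Since $\a^{\th*}$ is optimal and $u_\th^{\bar\rho}$ is the (smooth) value function, a standard converse-verification argument (the martingale optimality principle) forces $h(\th,s,X_{s,\th}^*,\pa_x u_\th^{\bar\rho},\bar\rho_s,\a^{\th*})=H(\th,s,X_{s,\th}^*,\pa_x u_\th^{\bar\rho},\bar\rho_s)$ for $ds$-a.e.\ $s$, a.s.; Assumption \ref{c2}(ii) makes the law of $X_{s,\th}^*$ have full support on $\dbR$ for $s\in(t,T]$, and since $x\mapsto\a^{\th*}(s,x)$ is continuous, \eqref{hNE} holds for a.e.\ $s$ and every $x$. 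The envelope theorem (Corollary 299 of \cite{Border}) then gives \eqref{bpH}, so the equilibrium dynamics is \eqref{dynamicfixlaw}; its law $\rho_\th=\cL(X_{\cdot,\th}^*)$ is a distributional solution of the Fokker--Planck equation in \eqref{fbpde}, and as $\rho=\bar\rho$ the pair $(u,\rho)$ solves \eqref{fbpde} in the sense of Definition \ref{pdesol}, with $\a^*$ satisfying \eqref{hNE}. Finally $u_\th(t,x)=u_\th^{\bar\rho}(t,x)=\sup_{\b^\th}J(\th,t,x,\b^\th;\bar\rho_{[t,T]})=V(\th,t,x;\a^*,\mu)=V(\th,t,x,\mu)$.

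\textbf{($\Leftarrow$)} Conversely, suppose $(u,\rho)$ solves \eqref{fbpde} and $\a^*\in\cA$ satisfies \eqref{hNE}; freeze $\bar\rho=\rho$. The backward equation of \eqref{fbpde} is exactly \eqref{HFixedlaw}, so the same It\^o/verification computation gives $u_\th(t,x)\ge J(\th,t,x,\b^\th;\bar\rho_{[t,T]})$ for all $\b^\th\in\cA^\th$, with equality for $\b^\th=\a^{\th*}$ because \eqref{hNE} turns each inequality in the computation into an equality. Hence $\a^{\th*}$ is optimal for player $\th$ given $\bar\rho$ (part 1 of Definition \ref{fixedlaw}) and $V(\th,t,x;\a^*,\mu)=u_\th(t,x)$. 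For part 2, \eqref{hNE} and the envelope theorem give \eqref{bpH}, so $X_{\cdot,\th}^{t,\mu,\a^*}$ solves the SDE \eqref{dynamicfixlaw}; this SDE is well posed by Assumption \ref{c2} (Lipschitz, non-degenerate coefficients), and $\cL(X_{\cdot,\th}^{t,\mu,\a^*})$ is therefore a distributional solution of the Fokker--Planck equation in \eqref{fbpde} with initial datum $\mu^\th$. With $(u,\rho)$ now fixed, that equation is linear, non-degenerate and parabolic, hence has a \emph{unique} distributional solution for the given initial datum; since $\rho_\th$ is also such a solution, $\cL(X_{\cdot,\th}^{t,\mu,\a^*})=\rho_\th$ for every $\th$, which is part 2. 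Thus $\rho\in\cM$ is an equilibrium with NE $\a^*$, and $V(\th,t,x,\mu)=u_\th(t,x)$.

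\textbf{Main obstacle.} The verification inequalities and the SDE/Fokker--Planck correspondence are routine. The two points requiring care are: in ($\Rightarrow$), upgrading optimality of $\a^{\th*}$ to the pointwise Hamiltonian identity \eqref{hNE}, which uses the non-degeneracy of Assumption \ref{c2}(ii) to turn an along-the-optimal-path statement into an a.e.\ statement in $(s,x)$; and in ($\Leftarrow$), the uniqueness of the distributional solution of the (now linear, non-degenerate) Fokker--Planck equation, which is precisely what converts ``the realized law solves the same equation as $\rho$'' into ``the realized law equals $\rho$''. Both hold under Assumption \ref{c2}. A recurring bookkeeping issue, dispatched via uniqueness of the PDE solutions and the measurability already built into $\cA$, is ensuring $\th$-measurability of every object constructed pointwise in $\th$.
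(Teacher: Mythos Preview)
Your proof is correct and follows the same high-level strategy as the paper: freeze the law ensemble, reduce to a $\th$-indexed family of classical control problems, couple HJB/verification with Fokker--Planck, and close via consistency. The technical implementations differ in two places. For ($\Rightarrow$), the paper obtains \eqref{hNE} more directly: it applies the dynamic programming principle at an \emph{arbitrary} initial $(t,x)$, sends $h\to0$, and compares with the HJB equation to get $h(\th,t,x,\pa_xu_\th^{\rho},\rho_t,\a^{*\th}(t,x))=H(\th,t,x,\pa_xu_\th^{\rho},\rho_t)$ at that $(t,x)$; since $(t,x)$ was arbitrary, \eqref{hNE} follows pointwise without any full-support argument. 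Your route via the martingale optimality principle plus non-degeneracy is valid but longer, and it only yields the identity for a.e.\ $s$ rather than all $s$ (a gap you would still need to close by continuity). For ($\Leftarrow$), the paper simply asserts $\cL(X_{\cdot,\th}^{t,\xi,\a^*})\equiv\rho_{\cdot,\th}$, whereas you correctly isolate the reason this holds: uniqueness of the distributional solution of the (linear, non-degenerate) Fokker--Planck equation. On that point your write-up is more careful than the paper's.
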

\begin{proof}
    First we assume given initial time $t$ and population initial law ensemble $\rho_0$, the game has a equilibrium $\rho\in\cM$. By the derivation above, it suffices to show that given $\rho$, the optimal closed-loop strategy $\a^{*\th}$ for each player $\th$ satisfies \eqref{hNE}. By dynamic programming principle in optimal control theory, see e.g. Theorem 3.3.1 in \cite{Pham2009}, we have
\begin{equation*}
    u^\rho_\th(t,x)=\dbE\Big[u^\rho_\th(t+h,X_{t+h,\th}^{t,x,\xi,\a^*})+\int_t^{t+h} F(\th,r,X_{r,\th}^{t,x,\xi,\a^*},\rho_r ,\a_r^{*\th}(r,X_{r,\th}^{t,x,\xi,\a^*}))dr\Big]
\end{equation*}
Take $h\to0$ we have:
\begin{equation}\label{alphaOpt}
    \begin{aligned}
        0=&\partial_t u_\th^{\rho} + \frac{1}{2}|\si(\th,t,x,\rho)|^2\partial_{xx} u_\th^{\rho}+b(\th,t,x,\a^{*\th}(t,x) \rho_t))\partial_x u_\th^{\rho}-F(\th,t,x,\rho_t,\a^{*\th}(t,x))\\
        =&\partial_t u_\th^{\rho} + \frac{1}{2}|\si(\th,t,x,\rho)|^2\partial_{xx} u_\th^{\rho}+H(\th,t,x,\partial_x u_\th^{\rho}, \rho_t)
    \end{aligned}
\end{equation}
and we are done.

    For the other direction. Assume FBPDE system \eqref{fbpde} on time interval $[t,T]$ with $\mu$ as the initial condition for $\rho_0$ has a solution $(u,\rho)$ as defined in \ref{pdesol} and a closed-loop control $\a^*$ exist such that \eqref{hNE} holds. Then by standard verification theorem in control theory, $u_\th(t,x)$ is the optimal value of the player $\th$ when the population law is $\rho$ and $\a^*$ is the corresponding control. Since \eqref{hNE} is true, \eqref{bpH} also hold true. Then the dynamic of player $\th$ using strategy $\a^*$ follows \eqref{dynamicfixlaw}, and $\cL(X_{t,\th}^{t,\xi,\a^{*}})\equiv\rho_{\th,t}$. Thus $\rho$ satisfies the definition of an equilibrium, and $u_\th(t,x)=V(\th,t,x,\mu)$.
\end{proof}

\subsection{HMFG System: FBSDE Formulation}\label{fbsde fp}
 Now we derive the coupled infinite-dimensional FBSDE system for the HMFG at the equilibrium. We follow the same fixed point argument on the population law ensemble as in Definition \ref{fixedlaw}. 

Let $\rho\in \cM$ be an equilibrium and $\a^{*\th}\in\cA^\th$ be the corresponding optimal control for player $\th$ when population law ensemble is fixed. By the argument in the proof of Theorem \ref{pdewellposed}, $\a^*$ satisfies \eqref{hNE}, dynamic $X_{s,\th}^{t,\xi,\a^*}$ solves SDE \eqref{dynamicfixlaw}, and $\cL(X_{s,\th}^{t,\xi,\a^*})\equiv\rho_{\th,s}$ for all $(\th,s)$. By Theorem \ref{pdewellposed}, infinite-dimension FBPDE system \eqref{fbpde} has a classical solution $(u,\rho)$. Let $Y_{s,\th}^{t,\xi,\a^*}:=u_\th(s,X_{s,\th}^{t,\xi,\a^*})$ and $Z_{s,\th}^{t,\xi,\a^*}:=\pa_xu_\th(s,X_{s,\th}^{t,\xi,\a^*})\si(s,X_{s,\th}^{t,\xi,\a^*},\rho_s)$. Then $(Y_{\th}^{t,\xi,\a^*},Z_{\th}^{t,\xi,\a^*})$ solves the following BSDE:
\begin{equation}\label{cost}
    \begin{aligned}
        Y_{s,\th}^{t,\xi,\a^\th}=&G(\th,X_{T,\th}^{t,x,\a^\th},\rho_T)+\int_s^TH(\th,r,X_{r,\th}^{t,x,\a^\th},\pa_xu_\th(r,X_{r,\th}^{t,\xi,\a^*}), \rho_r)\\
    &-\pa_pH(\th,r,X_{r,\th}^{t,x,\a^\th},\pa_xu_\th(r,X_{r,\th}^{t,\xi,\a^*}), \rho_r)\pa_xu_\th(r,X_{r,\th}^{t,\xi,\a^*})dr-\int_s^TZ_{r,\th}^{t,x,\a^\th}dB_r^\th\\
    =&G(\th,X_{T,\th}^{t,x,\a^\th},\rho_T)+\int_s^TF(\th,r,X_{r,\th}^{t,\xi,\a^{*\th}}, \rho_r,\a^{*\th}(r,X_{r,\th}^{t,\xi,\a^{*\th}}))dr-\int_s^TZ_{r,\th}^{t,\xi,\a^{\th*}}dB_r^\th
    \end{aligned}
\end{equation}
The last equality holds because $\a^*$ satisfies \eqref{hNE} which implies:
\begin{equation}\label{F}
    F(\th,t,x,a^{*\th}(t,x))=H(\th,t,x,\pa_xu_\th(t,x),\rho_t)-\partial_p H (\th,t,x,\pa_xu_\th(t,x),\rho_t)\pa_xu_\th(t,x)
\end{equation}
and we have the optimal value $u_\th(t,x)=Y_{t,\th}^{t,\xi,\a^{*\th}}$. 

Plugin $Z_{s,\th}^{t,\xi,\a^*}:=\pa_xu_\th(s,X_{s,\th}^{t,\xi,\a^*})\si(s,X_{s,\th}^{t,\xi,\a^*},\rho_s)$ in \eqref{dynamicfixlaw} and \eqref{cost}, we get a coupled FBSDE for player $\th$
\begin{equation}\label{coupledfbsde}
    \begin{cases}
        X_{s,\th}^{t,\xi,\a^*}=&\xi^\th+\int_t^s \hat b(\th,r,X_{r,\th}^{t,\xi,\a^*},Z_{r,\th}^{t,\xi,\a^*},\rho_r)dr + \int_t^s \si(\th,r,X_{r,\th}^{t,\xi,\a^*},\rho_r)dB_r^\th\\
        Y_{s,\th}^{t,\xi,\a^*}=&G(\th,X_{T,\th}^{t,\xi,\mu},\rho_T)+\int_s^T \hat F(\th,r,X_{r,\th}^{t,\xi,\a^*},Z_{r,\th}^{t,\xi,\a^*},\rho_r)dr-\int_s^T Z_{r,\th}^{t,\xi,\a^*} dB_r^\th
    \end{cases}
\end{equation}
Recall that $\cL(X_{s,\th}^{t,\xi,\a^*})=\rho_s$. We have the following infinite-dimensional coupled FBSDE
\begin{equation}\label{fbsdesystem}
    \begin{cases}
        X_{s,\th}^{t,\xi,\mu}=&\xi^\th+\int_t^s \hat b(\th,r,X_{r,\th}^{t,\xi,\mu},Z_{r,\th}^{t,\xi,\mu},\cL(X_{r}^{t,x,\mu}))dr + \int_t^s \si(\th,r,X_{r,\th}^{t,\xi,\mu},\cL(X_{r}^{t,\xi,\mu}))dB_r^\th\\
        Y_{s,\th}^{t,\xi,\mu}=&G(\th,X_{T,\th}^{t,\xi,\mu},\cL(X_{T}^{t,\xi,\mu}))+\int_s^T \hat F(\th,r,X_{r,\th}^{t,\xi,\mu},Z_{r,\th}^{t,\xi,\mu},\cL(X_{r}^{t,\xi,\mu}))dr-\int_s^T Z_{r,\th}^{t,\xi,\mu} dB_r^\th
    \end{cases}
\end{equation}
where 
$$\hat F(\th,t,x,z,\rho)=H(\th,t,x,\si^{-1}(\th,x,\rho)z,\rho)-\partial_p H (\th,t,x,\si^{-1}(\th,x,\rho)z,\rho)\si^{-1}(\th,x,\rho)z$$
$$\hat b(\th,t,x,z,\rho)=\pa_pH(\th,t,x,\si^{-1}(\th,x,\rho)z,\rho)$$
Here $X_{\th}^{t,\xi,\a^{*}}$ is the dynamic and $Y_{t,\th}^{t,\xi,\a^{*}}$ is the payoff of the player $\th$ using NE strategy. 

Notice that Assumption \ref{c2} is not enough to claim that $\hat F$ and $\hat b$ are uniformly Lipschitz in $(x,\rho)$. However, it is true when $z$ has a compact support. We will show later that $Z_{r,\th}^{t,x,\a^{\th*}}$ indeed has a compact support when we show the well-posedness of \eqref{fbsdesystem}, thus it is enough to consider $\hat F$ and $\hat b$ on a compact support. Here, we will show a characterization theorem.
\begin{thm}\label{fbsdeNE}
     Assume Assumption \ref{c2} holds. Given an initial time $t$ and population initial law ensemble $\mu$, the game has a equilibrium $\rho$ if and only if coupled infinite-dimensional FBSDE system \eqref{fbsdesystem} has a unique solution and there exists an optimizer $\a^{*\th}(t,x)$ such that \eqref{hNE} hold. Furthermore $V(\th,t,x,\mu)=Y_{t,\th}^{t,x,\mu}$.
\end{thm}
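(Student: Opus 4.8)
\quad The plan is to read Theorem \ref{fbsdeNE} as the FBSDE counterpart of Theorem \ref{pdewellposed}, and to bridge the two characterizations through the nonlinear Feynman--Kac / decoupling-field correspondence: from a classical solution of the FBPDE \eqref{fbpde} one manufactures a solution of \eqref{fbsdesystem} by It\^o's formula, and conversely any solution of \eqref{fbsdesystem}, read componentwise with the population law frozen, generates a classical solution of the HJB equation and a distributional solution of the Fokker--Planck equation, i.e.\ a solution of \eqref{fbpde}. Combined with Theorem \ref{pdewellposed} this yields the stated equivalence together with the identity $V(\theta,t,x,\mu)=Y_{t,\theta}^{t,x,\mu}$. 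I would organize the argument so that the two implications ``equilibrium $\Rightarrow$ FBSDE solvable'' and ``FBSDE solvable $\Rightarrow$ equilibrium'' are each obtained by composing the corresponding implication of Theorem \ref{pdewellposed} with this bridge, and then argue that the bridge is a bijection between solutions of \eqref{fbpde} and solutions of \eqref{fbsdesystem}, so that uniqueness is transferred as well.

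For the ``only if'' direction, suppose the game has an equilibrium $\rho$. By Theorem \ref{pdewellposed} the FBPDE \eqref{fbpde} has a solution $(u,\rho)$ with each $u_\theta\in C_b^2$, $|\partial u_\theta|\le C$, and an optimizer $\alpha^{*}\in\cA$ satisfying \eqref{hNE}. I would first solve the forward SDE \eqref{dynamicfixlaw}: its coefficients are Lipschitz in $x$ (the drift is $\partial_p H(\theta,r,\cdot,\partial_x u_\theta,\rho_r)$, Lipschitz since $\partial_p H$ is Lipschitz in $(x,p)$ by Assumption \ref{c2} and $\partial_x u_\theta$ is bounded and Lipschitz, and $\sigma$ is Lipschitz by Assumption \ref{lipschitz}), so $X^{t,\xi,\alpha^{*}}\in L_\boxtimes^2$ exists and is unique, and since $\rho$ is the equilibrium, $\cL(X_s^{t,\xi,\alpha^{*}})=\rho_s$. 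I then set $Y_{s,\theta}=u_\theta(s,X_{s,\theta})$ and $Z_{s,\theta}=\partial_x u_\theta(s,X_{s,\theta})\,\sigma(\theta,s,X_{s,\theta},\rho_s)$ and apply It\^o's formula to $u_\theta(s,X_{s,\theta})$, using the HJB equation to replace $\partial_t u_\theta+\tfrac12|\sigma|^2\partial_{xx}u_\theta$ by $-H(\theta,s,X_{s,\theta},\partial_x u_\theta,\rho_s)$. Because $\sigma^{-1}Z_{s,\theta}=\partial_x u_\theta(s,X_{s,\theta})$, the definitions of $\hat b,\hat F$ together with \eqref{bpH} and \eqref{F} identify the drift of $X$ as $\hat b(\theta,s,X_{s,\theta},Z_{s,\theta},\rho_s)$ and the drift of $Y$ as $\hat F(\theta,s,X_{s,\theta},Z_{s,\theta},\rho_s)$, so $(X,Y,Z)$ solves \eqref{coupledfbsde}, hence \eqref{fbsdesystem} since $\cL(X_s)=\rho_s$. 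Evaluating at $s=t$ gives $Y_{t,\theta}^{t,x,\mu}=u_\theta(t,x)=V(\theta,t,x,\mu)$, the last equality from Theorem \ref{pdewellposed}.

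For the ``if'' direction, let $(X,Y,Z)$ be a solution of \eqref{fbsdesystem} and $\alpha^{*}$ the given optimizer, and set $\rho:=\cL(X)$. Freezing the law ensemble at $\rho$, each $\theta$-component $(X_\theta,Y_\theta,Z_\theta)$ solves the decoupled Markovian FBSDE \eqref{coupledfbsde}. By Assumption \ref{c2} and Theorem 2.4.1 of \cite{ZhangThesis}, the HJB equation in \eqref{fbpde} with this frozen $\rho$ has a classical solution $u_\theta\in C_b^2$; applying It\^o to $u_\theta(s,X_{s,\theta})$ and invoking uniqueness for the BSDE driven by $B^\theta$ (whose driver $z\mapsto\hat F(\theta,r,X_{r,\theta},z,\rho_r)$ is Lipschitz on the range of $Z$, see below) forces $Y_{s,\theta}=u_\theta(s,X_{s,\theta})$ and $Z_{s,\theta}=\partial_x u_\theta(s,X_{s,\theta})\,\sigma(\theta,s,X_{s,\theta},\rho_s)$, while $\rho_\theta=\cL(X_\theta)$ is a distributional solution of the Fokker--Planck equation in \eqref{fbpde}, the drift matching because $\hat b(\theta,r,X_{r,\theta},\partial_x u_\theta\,\sigma,\rho_r)=\partial_p H(\theta,r,X_{r,\theta},\partial_x u_\theta,\rho_r)$. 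Thus $(u,\rho)$ solves \eqref{fbpde}, and Theorem \ref{pdewellposed} yields that $\rho$ is an equilibrium with $V(\theta,t,x,\mu)=u_\theta(t,x)=Y_{t,\theta}^{t,x,\mu}$.

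The main obstacle is the McKean--Vlasov coupling through $\cL(X_r)$ in \eqref{fbsdesystem}: under Assumption \ref{c2} alone $\hat b,\hat F$ need not be globally Lipschitz in $(x,\rho)$ (the inversion $\sigma^{-1}z$ brings in $z$, and Lipschitz control of $\hat F$ in $z$ costs a factor of $z$), so both the It\^o/decoupling computation and, above all, the \emph{uniqueness} claim must be carried out on the region where $Z$ is known to stay in a fixed compact set. I would therefore quote the a priori bound on $Z_{r,\theta}^{t,x,\alpha^{*\theta}}$ established in Section \ref{sect-smallT}, so that $\hat b,\hat F$ are Lipschitz on the relevant domain and the forward SDE well-posedness and BSDE uniqueness used above are legitimate, and then verify that the bridge $(u,\rho)\leftrightarrow(X,Y,Z)$ is inverse on both sides and preserves the self-consistency $\cL(X_s)=\rho_s$, so that a unique FBSDE solution corresponds to a unique fixed point $\rho\in\cM$. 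This bookkeeping, rather than any single estimate, is the delicate part.
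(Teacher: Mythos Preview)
Your ``only if'' direction mirrors the paper's own derivation in Section~\ref{fbsde fp} and is fine. The gap is in your ``if'' direction: you propose to apply It\^o to $u_\theta(s,X_{s,\theta})$ with the \emph{given} forward process $X_\theta$ and then invoke \emph{BSDE} uniqueness to identify $Y_{s,\theta}=u_\theta(s,X_{s,\theta})$. But the FBSDE \eqref{coupledfbsde} is genuinely coupled: the drift of $X_\theta$ is $\hat b(\theta,r,X_{r,\theta},Z_{r,\theta},\rho_r)=\partial_pH(\theta,r,X_{r,\theta},\sigma^{-1}Z_{r,\theta},\rho_r)$, which depends on the \emph{given} $Z_\theta$. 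Thus It\^o on $u_\theta(s,X_{s,\theta})$ yields
\[
d\,u_\theta(s,X_{s,\theta})=\Big[\partial_pH(\theta,s,X_{s,\theta},\sigma^{-1}Z_{s,\theta},\rho_s)\,\partial_x u_\theta-H(\theta,s,X_{s,\theta},\partial_x u_\theta,\rho_s)\Big]ds+\tilde Z_{s,\theta}\,dB_s^\theta,
\]
with $\tilde Z_{s,\theta}=\partial_x u_\theta\,\sigma$; this is \emph{not} of the form $-\hat F(\theta,s,X_{s,\theta},\tilde Z_{s,\theta},\rho_s)\,ds+\tilde Z_{s,\theta}\,dB_s^\theta$ unless $Z_{s,\theta}=\tilde Z_{s,\theta}$ already. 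So $(u_\theta(s,X_{s,\theta}),\tilde Z_\theta)$ is not a second solution of the same BSDE, and BSDE comparison does not apply.

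The paper closes this gap differently. With $\rho=\cL(X)$ frozen it solves the HJB equation, uses the hypothesis \eqref{hNE} and the verification theorem to obtain the optimally controlled process $X^{t,\xi,\alpha^*}$ via \eqref{dynamicfixlaw}, sets $Y^*_{s,\theta}=u_\theta(s,X^{t,\xi,\alpha^*}_{s,\theta})$, $Z^*_{s,\theta}=\partial_x u_\theta\,\sigma$, and checks that $(X^{t,\xi,\alpha^*},Y^*,Z^*)$ is a \emph{second} solution of the fixed-$\rho$ FBSDE \eqref{fixedthfbsde}. It then invokes \emph{FBSDE} uniqueness (Theorem~8.3.1 in \cite{Zhangbook}: existence of a classical decoupling field forces the coupled FBSDE to have a unique solution) to identify the two triples, whence $\cL(X^{t,\xi,\alpha^*})=\rho$ and Definition~\ref{fixedlaw} gives the equilibrium directly, without passing back through Theorem~\ref{pdewellposed}. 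This also sidesteps the Lipschitz difficulty you flag: the decoupling-field uniqueness result does not require global Lipschitz coefficients, so one need not import the a~priori bound on $Z$ from Section~\ref{sect-smallT} at this stage.
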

\begin{proof}
    The forward direction is showed in the derivation above. For the backward direction, suppose infinite-dimensional FBSDE \eqref{fbsdesystem} has a solution $(X_{\th}^{t,\xi,\mu},Y_{\th}^{t,\xi,\mu},Z_{\th}^{t,\xi,\mu})$. Denote $\rho_{\th,s}=\cL(X_{s,\th}^{t,\xi,\mu})$. Under Assumption \ref{c2}, the HJB equation \eqref{HFixedlaw} has a unique solution. Since $\a^{*\th}(t,x)$ satisfies \eqref{hNE}, by standard verification theorem, see e.g. Theorem 3.5.2 in \cite{Pham2009}, $\a^*(s,X_{s,\th}^{t,\xi,\a^*})$ is an optimal control for $\th$ with fixed population law $\rho$. Furthermore, $X_{s,\th}^{t,\xi,\a^*}$ solves the SDE \eqref{dynamicfixlaw}.  Let $Y_{s,\th}^{t,\xi,\a^*}:=u_\th(s,X_{s,\th}^{t,\xi,\a^*})$ and $Z_{s,\th}^{t,\xi,\a^*}:=\pa_xu_\th(s,X_{s,\th}^{t,\xi,\a^*})\si(s,X_{s,\th}^{t,\xi,\a^*},\rho_s)$. Then $(X_{\th}^{t,\xi,\a^*},Y_{\th}^{t,\xi,\a^*},Z_{\th}^{t,\xi,\a^*})$ solves the FBSDE 
    \begin{equation}\label{fixedthfbsde}
    \begin{cases}
        X_{s,\th}^{t,\xi,\a^*}=&\xi^\th+\int_t^s \hat b(\th,r,X_{r,\th}^{t,\xi,\a^*},Z_{r,\th}^{t,\xi,\a^*},\rho_{\th,r}))dr + \int_t^s \si(\th,r,X_{r,\th}^{t,\xi,\a^*},\rho_{\th,r})dB_r^\th\\
        Y_{s,\th}^{t,\xi,\a^*}=&G(\th,X_{T,\th}^{t,\xi,\a^*},\rho_{\th,T})+\int_s^T \hat F(\th,r,X_{r,\th}^{t,\xi,\a^*},Z_{r,\th}^{t,\xi,\a^*},\rho_{\th,r})dr-\int_s^T Z_{r,\th}^{t,\xi,\a^*} dB_r^\th
    \end{cases}
\end{equation}
Furthermore, by Theorem 8.3.1 in \cite{Zhangbook}, since $u_\th^{\rho}$ has a classical solution, FBSDE \eqref{fixedthfbsde} has a unique solution. Then $(X_{\th}^{t,\xi,\a^*},Y_{\th}^{t,\xi,\a^*},Z_{\th}^{t,\xi,\a^*})=(X_{\th}^{t,\xi,\mu},Y_{\th}^{t,\xi,\mu},Z_{\th}^{t,\xi,\mu})$. In particular, $\cL(X_{\th}^{t,\xi,\a^*})=\cL(X_{\th}^{t,\xi,\mu})$. Thus $\cL(X_{\th}^{t,\xi,\mu})$ is indeed an equilibrium by Definition \ref{fixedlaw}.\end{proof}

\section{Local Well-posedness}\label{sect-smallT}
We will show the well-posedness of the infinite-dimensional FBSDE system (\ref{fbsdesystem}) on small time interval. We will follow the fixed point argument outlined above and show that a unique fixed point exists via the Banach fixed point theorem. For the Banach fixed point argument, we first fix $\bar\rho_1,\bar\rho_2 \in\cM$, and then find the solution of the HJB equation \eqref{HFixedlaw} $u^{\bar\rho_1}$, and $u^{\bar\rho_2}$. The fixed point exists if $d_2(\rho^{\bar\rho_1},\rho^{\bar\rho_2})\leq Cd_2(\bar\rho_1,\bar\rho_2)$ for some $C<1$. 

\begin{thm}\label{wellposed}
    Under Assumption \ref{c2}, FBSDE \eqref{fbsdesystem} has a unique solution when $T\leq\d_0$ where $\d_0$ only depends on $L$.
\end{thm}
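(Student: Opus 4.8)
The plan is to run a Banach fixed-point argument on the map $\Phi:\cM\to\cM$ that sends a frozen population law ensemble $\bar\rho\in\cM$ to the law ensemble $\rho=\cL(X^{\bar\rho})$ of the solution to the decoupled forward dynamics \eqref{dynamicfixlaw}, using the metric $d_2$ under which $(\cM,d_2)$ is complete by Theorem \ref{completeness}. Fix $\bar\rho_1,\bar\rho_2\in\cM$. Step one: for each $\th\in I$ solve the HJB equation \eqref{HFixedlaw} with frozen law $\bar\rho_i$; under Assumption \ref{c2} the cited result (Theorem 2.4.1 in \cite{ZhangThesis}) gives a classical solution $u_\th^{\bar\rho_i}\in C_b^2$ with $|\pa u_\th^{\bar\rho_i}|\leq C$ for a constant $C=C(T,L)$. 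This uniform gradient bound is what lets us restrict $\hat b,\hat F$ to the compact set $\{|z|\leq C\|\si\|_\infty\}$ on which, by Assumption \ref{c2}(i)--(iii), they \emph{are} uniformly Lipschitz in $(x,\rho)$; so the earlier remark about $Z$ having compact support is discharged here.

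Step two: estimate the gap between the two HJB solutions. I would show $|\pa_x u_\th^{\bar\rho_1}(t,x)-\pa_x u_\th^{\bar\rho_2}(t,x)|\leq C\,d_1(\bar\rho_{1,t},\bar\rho_{2,t})\leq C\,d_1(\bar\rho_1,\bar\rho_2)$ using the Lemma that $d_p(\phi_t,\bar\phi_t)\le d_p(\phi,\bar\phi)$ together with the Lipschitz dependence of $H$, $\si$, $G$ on $\mu$ and a Gronwall argument on the linearized HJB (or, equivalently, on the associated decoupled FBSDE \eqref{fixedthfbsde} via stability of Lipschitz-coefficient FBSDEs, Theorem 8.3.1 in \cite{Zhangbook}). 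Step three: feed these controls into the forward SDE \eqref{dynamicfixlaw}. Writing $X_{s,\th}^{i}$ for the solution driven by $u_\th^{\bar\rho_i}$ and $\bar\rho_i$, a standard SDE stability estimate using the Lipschitz property of $\hat b,\si$ in $(x,\rho)$ on the compact $z$-set, Burkholder--Davis--Gundy, and Gronwall gives
\begin{equation*}
\dbE\Big[\sup_{s\in[t,T]}|X_{s,\th}^{1}-X_{s,\th}^{2}|^2\Big]\leq C(T)\Big(\int_t^T \big(d_1(\bar\rho_{1,r},\bar\rho_{2,r})^2+\sup_\th|\pa_x u_\th^{\bar\rho_1}-\pa_x u_\th^{\bar\rho_2}|^2\big)dr\Big)\leq C(T)\,T\,d_2(\bar\rho_1,\bar\rho_2)^2,
\end{equation*}
where $C(T)$ stays bounded as $T\downarrow 0$. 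Integrating over $\th\in I$ and using $W_2(\rho_{1,\th},\rho_{2,\th})^2\le \dbE\sup_s|X^1_{s,\th}-X^2_{s,\th}|^2$ together with Jensen (to pass from the $d_2$-type quantity $\int_I W_2^2\,d\th$ to $(\int_I W_2\,d\th)^2=d_2^2$) yields $d_2(\rho^{\bar\rho_1},\rho^{\bar\rho_2})^2\leq C(T)\,T\, d_2(\bar\rho_1,\bar\rho_2)^2$.

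Step four: choose $\d_0=\d_0(L)$ so that $C(T)\,T<1$ for all $T\le\d_0$; then $\Phi$ is a contraction on the complete metric space $(\cM,d_2)$ and has a unique fixed point $\rho^*$. By Theorem \ref{fbsdeNE} this fixed point is exactly the (unique) equilibrium, and unwinding the definitions of $\hat b,\hat F$ shows $(X^{\rho^*},Y^{\rho^*},Z^{\rho^*})$ is the unique solution of \eqref{fbsdesystem}. Throughout, the measurability in $\th$ of the constructed objects is handled as in the Remark following Theorem \ref{completeness}, so that the fixed point genuinely lies in $\cM$. The main obstacle I anticipate is making the constants genuinely uniform in $\th$ and controlling the $\th$-dependence carefully: one needs the HJB gradient bound $C$ and the Lipschitz/stability constants to be independent of $\th$ (which they are, since all the hypotheses in Assumptions \ref{lipschitz}, \ref{costassum}, \ref{c2} are uniform in $\th$), and one must verify that integrating the per-$\th$ estimates over $I$ respects the $d_2$ metric rather than an $L^2$-in-$\th$ Wasserstein metric — the Jensen step above is the delicate point, and it is why the contraction is set up in $d_2$ (equivalently, one could work with the stronger $L^2$-in-$\th$ metric and note it dominates $d_2$, but then completeness must be re-checked). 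A secondary technical point is justifying that the optimizer $\a^{*\th}$ exists and is admissible (Lipschitz in $x$, measurable in $\th$) via the envelope theorem \eqref{bpH} and measurable-selection arguments, which is needed to connect the fixed point back to the FBSDE through Theorem \ref{fbsdeNE}.
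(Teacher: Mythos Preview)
Your high-level strategy matches the paper's: run a Banach fixed point on the map $\Phi:\bar\rho\mapsto\cL(X^{\bar\rho})$ on the complete space $(\cM,d_p)$, using the uniform bound $|\pa_x u_\th^{\bar\rho}|\le C(L,T)$ from Assumption \ref{c2} to confine $Z$ to a compact set where $\hat b,\hat F$ are Lipschitz. The difference is in how the contraction estimate is obtained.

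The paper does \emph{not} go through your Step~2 (stability of $\pa_x u_\th^{\bar\rho}$ in $\bar\rho$). Instead it stays at the FBSDE level throughout: for fixed $\bar\rho$ it solves the frozen FBSDE \eqref{coupledfbsde} by a truncation $\psi^M(z)=(-M)\vee z\wedge M$, shows the truncated system's decoupling field coincides with the HJB solution (so $|Z|\le M$ and truncation is inactive), and establishes uniqueness in the restricted space $\dbZ=\{Z:|Z_t^\th|\le M\}$. For the contraction, it compares the two full triples $(X,Y,Z)$ and $(\bar X,\bar Y,\bar Z)$ directly via the \emph{a priori bound for coupled Lipschitz FBSDEs}, obtaining $d_1(\cL(X),\cL(\bar X))\le (CT)^{1/2}L\,d_1(\rho,\bar\rho)$ in the $d_1$ metric. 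This is cleaner than your route because the FBSDE a priori bound handles the $\Delta X$, $\Delta Y$, $\Delta Z$ coupling in one shot; you instead need a separate PDE stability estimate for $\pa_x u$ (your Step~2) plus a bound on $\pa_{xx}u$ to control $|\pa_x u^{\bar\rho_1}(r,X^1)-\pa_x u^{\bar\rho_1}(r,X^2)|$, and then SDE stability. Your approach works, but it imports more auxiliary estimates. Also, your ``Jensen step'' worry is a non-issue: from a uniform-in-$\th$ bound $W_2(\rho_{1,\th},\rho_{2,\th})\le\sqrt{C(T)T}\,d_2(\bar\rho_1,\bar\rho_2)$ you can integrate directly over $I$; no Jensen is needed. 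Finally, your closing appeal to Theorem \ref{fbsdeNE} is backwards (that theorem \emph{assumes} FBSDE well-posedness); the correct argument, which the paper carries out, is: any solution $(X,Y,Z)$ of \eqref{fbsdesystem} has $\cL(X)$ a fixed point of $\Phi$, hence $\cL(X)=\rho^*$, and then uniqueness of the frozen-$\rho^*$ FBSDE on $\dbL_\boxtimes^2\times\dbL_\boxtimes^2\times\dbZ$ gives uniqueness of $(X,Y,Z)$.
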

\begin{proof} We will show by the Banach fixed point theorem. For simplicity, we will show for initial conditions $(0,x,\mu)$. Fix a law ensemble $\bar \rho\in \cM$ such that $\bar\rho_0=\mu$. We first show that FBSDE \eqref{coupledfbsde} has a unique solution $(X^{\bar\rho},Y^{\bar\rho},Z^{\bar\rho})$ on $\dbL_\boxtimes^2\times\dbL_\boxtimes^2\times\dbZ$ where $\dbZ$ is a subspace of $\dbL_\boxtimes^2$ that we will define later. Denote mapping $\Phi:\cM\mapsto\cM$ as $\Phi(\bar\rho)(\th)=\cL(X^{\bar\rho,\th})$. Since $X^{\bar\rho,\cdot}$ is $\cI-$measurable, $\Phi(\bar\rho)(\cdot)$ is also $\cI-$measurable. We will show that $\Phi$ is a contraction map on $\cM$ when $T$ is small enough.

First, define the subset 
$$\dbZ:=\{Z\in\dbL_\boxtimes^2(\O\times I;C([0,T];\dbR)): |Z_t^\th|\leq M ~~\forall t\in[0,T]~~\dbP\boxtimes\l_I-a.s.\}.$$
where $M\geq|\si(\th,s,x,\rho_s)\pa_xu_\th^\rho(s,x)|$ for all $(\th,s,x,\rho)$, here $\rho\in\cM$. By Assumption \ref{c2} such $M$ exists and depends on $T$ and $L$. $\dbZ$ is a complete subspace: for a Cauchy sequence $\{Z^n\}\subset\dbZ$, there exists a $Z\in\dbL_\boxtimes^2(\O\times I;C([0,T];\dbR))$ such that $\lim_{n\to\infty}Z_n=Z$ in $\|\cdot\|_2$ by completeness of $\dbL_\boxtimes^2(\O\times I;C([0,T];\dbR))$. Then there exists a subsequence $\{Z^{n_k}\}$ such that $|Z_t^\th|=\lim_{k\to\infty}|Z_t^{n_k,\th}|\leq M$ $[0,T]\times\O\times I$ a.e. 

Now we will show that for fixed $\bar\rho\in\cM$ such that $\rho_0=\mu$, the FBSDE \eqref{coupledfbsde} has a unique solution $(X^{\bar\rho},Y^{\bar\rho},Z^{\bar\rho})$ on $\dbL_\boxtimes^2\times\dbL_\boxtimes^2\times\dbZ$. However, the difficulty lies in the fact that $\hat b$ and $\hat F$ are not Lipschitz in $x$. To overcome that, we need to first consider an truncated version of equation \eqref{coupledfbsde}. Then we observe that it's decoupling field coincide with HJB equation \eqref{HFixedlaw} and use the fact that when the solution of HJB equation \eqref{HFixedlaw} exist, it's derivative on $x$ is bounded.

Define $\psi^M(x)=-M\vee x\wedge M$
$$\hat b^M(\th,t,x,z,\mu)=\hat b(\th,t,x,\psi^M(z),\mu)\quad\hat F^M(\th,t,x,z,\mu)=\hat F(\th,t,x,\psi^M(z),\mu)$$
under the truncation on $z$, $\hat b^M$ and $\hat F^M$ are {Lipschitz} on $(x,p)$ since $H$, and $\pa_pH$ are bounded and Lipschitz in $(x,p)$ and $\si^{-1}$ is bounded and Lipschitz in $x$ and $z$ has bounded support $[-M,M]$. Let us consider the truncated version of FBSDE system \eqref{fbsdesystem}:
\begin{equation}\label{truncatedcoupledfbsde}
    \begin{cases}
        X_{s,\th}^{x,\bar\rho,M}&=x+\int_0^s \hat b^M(\th,r,X_{r,\th}^{x,\bar\rho,M},Z_{r,\th}^{x,\bar\rho,M},\bar\rho_r)dr + \int_0^s \si(\th,r,X_{r,\th}^{x,\bar\rho,M},\bar\rho_r)dB_r^\th\\
        Y_{s,\th}^{x,\bar\rho,M}&=G(\th,X_{T,\th}^{x,\bar\rho,M},\bar\rho_T)+\int_s^T \hat F^M(\th,r,X_{r,\th}^{x,\bar\rho,M},Z_{r,\th}^{x,\bar\rho,M},\bar\rho_r)dr-\int_s^T Z_{r,\th}^{x,\bar\rho,M} dB_r^\th
    \end{cases}
\end{equation}
By the standard contraction mapping argument $(X^{x,\bar\rho,M},Y^{x,\bar\rho,M},Z^{x,\bar\rho,M})$ has a unique strong solution on $\dbL_\boxtimes^2\times\dbL_\boxtimes^2\times\dbL_\boxtimes^2$ when $T$ is small enough.

For each $\th\in I$, let us consider the decoupling field $u_\th^{\bar\rho,M}(t,x)$ such that $Y_{s,\th}^{x,\bar\rho,M}=u_\th^{\bar\rho,M}(s,X_{s,\th}^{x,\bar\rho,M})$. Apply It\^o's lemma, $u_\th^{\bar\rho,M}(t,x)$ satisfies the following HJB equation:
\begin{equation}\label{truncatedHJB}
    \begin{cases}
        \partial_t u_\th^{\bar\rho,M}+\frac{1}{2}|\si_\th^{\bar\rho}|^2\partial_{xx}u_\th^{\bar\rho,M}+H(\th,t,x,(\si_\th^{\bar\rho})^{-1}\psi^M(\si_\th^{\bar\rho}\pa_xu_\th^{\bar\rho,M}),\bar\rho_t)\\
        ~~~+\pa_pH(t,x,(\si_\th^{\bar\rho})^{-1}\psi^M(\si_\th^{\bar\rho}\pa_xu_\th^{\bar\rho,M}),\bar\rho_t)(\pa_xu_\th^{\bar\rho,M}-(\si_\th^{\bar\rho})^{-1}\psi^M(\si_\th^{\bar\rho}\pa_xu_\th^{\bar\rho,M}))=0\\
        u_\th^{\bar\rho,M}(T,x) = G(\th, x, \bar \rho_T),\quad u_\th^{\bar\rho,M}(s,x)\in[0,T]\times\dbR.
    \end{cases}    
\end{equation}
Since $H(\th,t,x,-\frac{M}{\si_\th^{\bar\rho}}\vee p\wedge \frac{M}{\si_\th^{\bar\rho}},\bar\rho_t)+\pa_pH(t,x,-\frac{M}{\si_\th^{\bar\rho}}\vee p\wedge \frac{M}{\si_\th^{\bar\rho}},\bar\rho_t)(p-(-\frac{M}{\si_\th^{\bar\rho}}\vee p\wedge \frac{M}{\si_\th^{\bar\rho}})$ is uniformly Lipschitz in $(x,p)$ in addition to Assumption \ref{c2}, quasi-linear pde \eqref{truncatedHJB} has a unique classical solution. 

Furthermore, we know the solution of HJB equation \ref{HFixedlaw} $u_\th^{\bar\rho}(t,x)$ has a uniformly bounded partial derivative such that 
$|\si_\th^{\bar\rho}\pa_xu_\th^{\bar\rho}|\leq M$ uniformly. Thus $u_\th^{\bar\rho}(t,x)$ is a solution of \eqref{truncatedHJB} and 
\begin{equation}\label{zNu}
    |Z_{t,\th}^{x,\bar\rho,M}|=|\si(\th,t,X_{t,\th}^{x,\bar\rho,M},\bar\rho_t)\pa_xu_\th^{\bar\rho}(t,X_{t,\th}^{x,\bar\rho,M})|\leq M\quad\forall t\in[0,T]\quad\dbP-a.s.\quad \forall\th\in I.
\end{equation}
Since $Z^{x,\bar\rho,M}$ never reaches the truncation $M$, we can conclude that $(X^{x,\bar\rho,M},Y^{x,\bar\rho,M},Z^{x,\bar\rho,M})$ is in fact a solution of FBSDE \eqref{coupledfbsde} and $(X^{x,\bar\rho,M},Y^{x,\bar\rho,M},Z^{x,\bar\rho,M})\in\dbL_\boxtimes^2\times\dbL_\boxtimes^2\times\dbZ$.

Next, we will show that FBSDE \eqref{coupledfbsde} has a unique solution on $\dbL_\boxtimes^2\times\dbL_\boxtimes^2\times\dbZ$. Suppose $(X^1,Y^1,Z^1)$, $(X^2,Y^2,Z^2)\in\dbL_\boxtimes^2\times\dbL_\boxtimes^2\times\dbZ$ are two solutions for FBSDE \eqref{coupledfbsde}. Denote $\D X=X^1-X^2$ and similar for $\D Y$ and $\D Z$. Then for $\l_I$-almost all $\th\in I$, we have FBSDE:
\begin{equation}
    \begin{cases}
        \D X_{t}^\th=\int_0^t\b_{s,\th}^1\D X_s^\th+\g_{s,\th}^1\D Z_s^\th ds+\int_0^t\b_{s,\th}^2\D X_s^\th dB_s^\th\\
        \D Y_t^\th = \b_{T,\th}^3\D X_T^\th+\int_t^T\b_{s,\th}^4\D X_s^\th+\g_{s,\th}^4\D Z_s^\th ds-\int_t^T \D Z_s^\th dB_s^\th
    \end{cases}
\end{equation}
where 
$$\b_{s,\th}^1=\frac{\hat b(\th,s,X_s^{1,\th},Z_s^{1,\th},\rho_s)-\hat b(\th,s, X_s^{2,\th},Z_s^{1,\th},\rho_s)}{\D X_s^\th}\dbI_{\{|\D X_s^\th|>0\}}$$
$$\g_{s,\th}^1=\frac{\hat b(\th,s, X_s^{2,\th},Z_s^{1,\th},\rho_s)-\hat b(\th,s,X_s^{1,\th},\bar Z_s^{2,\th},\rho_s)}{\D Z_s^\th}\dbI_{\{|\D Z_s^\th|>0\}}$$
and $\b_{s,\th}^2$ for $\si$, $\b_{T,\th}^3$ for $G$, and $(\b_{s,\th}^4,\g_{s,\th}^4)$ for $\hat F$ respectively. Since $\hat F$ and $\hat b$ are Lipschitz in $(x,z)$ when $z$ has compact support, $|\b_{s,\th}^1|,|\b_{s,\th}^2|,|\b_{s,\th}^3|,|\b_{s,\th}^4|,|\g_{s,\th}^1|,|\g_{s,\th}^4|\leq L$ where $L$ is the common Lipschitz constant of $G,\hat F$ and $\hat b$ over $(x,z)$. By the a priori bound of FBSDE, we have:
$$\dbE[\sup_{t\in[0,T]}|\D X_t^\th|^2+\sup_{t\in[0,T]}|\D Y_t^\th|^2+\int_0^T|\D Z_s^\th|^2ds]d\leq0$$
Thus $(X^{1,\th},Y^{1,\th},Z^{1,\th})\equiv(X^{2,\th},Y^{2,\th},Z^{2,\th})$ $\forall\th\in I$ $\l_I$-almost surely. 

Lastly, we will show that the map from $\bar\rho$ to $\cL(X_{\cdot}^{\bar\rho})$ is a contraction map.
Fix initial conditions $(0,\mu)$, for the population law $\rho,\bar\rho\in\cM$, where $\rho_0=\bar\rho_0=\mu$, denote the solution of corresponding FBSDE \eqref{coupledfbsde} with initial conditions $(0,\xi)$ where $\cL(\xi)=\mu$ as $(X,Y,Z)$ and $(\bar X,\bar Y,\bar Z)$. We need to find $d_2(\cL(\bar X),\cL(X))\leq Cd_x(\bar\rho,\rho)$ for some $C<1$. Let us denote $(\D X,\D Y,\D Z)=(X-\bar X,Y-\bar Y,Z-\bar Z)$, then we have
\begin{equation}
    \begin{cases}
        \D X_{t}^\th=\int_0^t\b_{s,\th}^1\D X_s^\th+\g_{s,\th}^1\D Z_s^\th+\d_{s,\th}^1d_2(\rho_s,\bar\rho_s)ds+\int_0^t\b_{s,\th}^2\D X_s^\th+\d_{s,\th}^2d_2(\rho_s,\bar\rho_s)dB_s^\th\\
        \D Y_t^\th = \b_{T,\th}^3\D X_T^\th+\d_{T,\th}^3d_2(\rho_T,\bar\rho_T)+\int_t^T\b_{s,\th}^4\D X_s^\th+\g_{s,\th}^4\D Z_s^\th+\d_{s,\th}^4d_2(\rho_s,\bar\rho_s)ds-\int_t^T \D Z_s^\th dB_s^\th
    \end{cases}
\end{equation}
where 
$$\b_{s,\th}^1=\frac{\hat b(\th,s,X_s^\th,Z_s^\th,\rho_s)-\hat b(\th,s,\bar X_s^\th,Z_s^\th,\rho_s)}{\D X_s^\th}\dbI_{\{|\D X_s^\th|>0\}},~~\g_{s,\th}^1=\frac{\hat b(\th,s,\bar X_s^\th,Z_s^\th,\rho_s)-\hat b(\th,s,\bar X_s^\th,\bar Z_s^\th,\rho_s)}{\D Z_s^\th}\dbI_{\{|\D Z_s^\th|>0\}}$$
$$\d_{s,\th}^1=\frac{\hat b(\th,s,\bar X_s^\th,\bar Z_s^\th,\rho_s)-\hat b(\th,s,\bar X_s^\th,\bar Z_s^\th,\bar \rho_s)}{d_2(\rho_s,\bar\rho_s)}\dbI_{\{d_2(\rho_s,\bar\rho_s)>0\}}$$
and $(\b_{s,\th}^2,\d_{s,\th}^2)$ for $\si$, $(\b_{T,\th}^3,\d_{T,\th}^3)$ for $G$, and $(\b_{s,\th}^4,\g_{s,\th}^4,\d_{s,\th}^4)$ for $\hat F$ respectively. By the a priori bound of FBSDE, we have:
\begin{equation}
    \begin{aligned}
        &\dbE[\sup_{t\in[0,T]}|\D X_t^\th|^2+\sup_{t\in[0,T]}|\D Y_t^\th|^2+\int_0^T|\D Z_s^\th|^2ds]\\
        \leq&C\dbE[(\int_0^T|\d_{s,\th}^1|d_1(\rho_s,\bar\rho_s)+|\d_{s,\th}^4|d_1(\rho_s,\bar\rho_s)ds)^2+\int_0^T|\d_{s,\th}^2d_1(\rho_s,\bar\rho_s)|^2ds+|\d_{T,\th}^3d_1(\rho_T,\bar\rho_T)|^2]\\
        \leq&CTL^2|d_1(\rho,\bar\rho)|^2
    \end{aligned}
\end{equation}
where $C$ depends on Lipschitz constant $L$. Notice that $|\d_{s,\th}^i|\leq L$ for $i=1,2,3,4$. Thus we have
\begin{equation}
    \begin{aligned}
        d_1(\cL(\bar X),\cL(X))=&\int_I\inf_{\dbP\in\G(\rho_\th,\bar\rho_\th)}\dbE^\dbP\sup_{t\in[0,T]}|X_t^\th-\bar X_t^\th|\l_I(d\th)\\
        \leq&\int_I|\dbE^\dbP\sup_{t\in[0,T]}|X_t^\th-\bar X_t^\th|^2|^\frac{1}{2}\l_I(d\th)\\
        \leq&|CT|^\frac{1}{2}Ld_1(\rho,\bar\rho)
    \end{aligned}
\end{equation}
Thus, for $\d_0\leq \frac{1}{CL^2}$, the mapping is a contraction.
\end{proof}
\begin{thm}
    Let Assumption \ref{c2} hold, and $\d_0$ be the same as in Theorem \ref{wellposed}. For any initial conditions $(t,x,\mu)$, the HMFG \ref{GMFGNE} has a unique value whenever $T\leq\d_0$ .
\end{thm}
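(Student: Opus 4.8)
The plan is to read the statement off from the local well-posedness Theorem~\ref{wellposed} and the FBSDE characterization Theorem~\ref{fbsdeNE}; no new estimate is needed. Fix initial data $(t,x,\mu)$ and assume $T\le\d_0$. First I would establish existence of an equilibrium. By Theorem~\ref{wellposed}, the infinite-dimensional system \eqref{fbsdesystem} started from any $\xi$ with $\cL(\xi)=\mu$ has a unique solution $(X^{t,\xi,\mu},Y^{t,\xi,\mu},Z^{t,\xi,\mu})\in\dbL_\boxtimes^2\times\dbL_\boxtimes^2\times\dbZ$, and the proof of that theorem already shows that the $\th$-wise decoupling field $u_\th$ is the classical $C_b^2$ solution of the HJB equation \eqref{HFixedlaw} with $\rho=\cL(X^{t,\xi,\mu})$ and $|\si\,\pa_xu_\th|\le M$. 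It then remains to exhibit an admissible optimizer $\a^*\in\cA$ satisfying \eqref{hNE}: for each $\th$ one takes $\a^{*\th}(s,y)$ to be a maximizer of $a\mapsto h(\th,s,y,\pa_xu_\th(s,y),\rho_s,a)$; under Assumption~\ref{c2} and the standing regularity of $h$ in the control variable this maximizer exists, may be chosen jointly measurable in $(\th,s,y)$ and Lipschitz in $y$, so that $\a^*\in\cA$, and \eqref{hNE} holds by construction. The backward implication of Theorem~\ref{fbsdeNE} then yields that $\rho:=\cL(X^{t,\xi,\mu})$ is an equilibrium of the HMFG with $V(\th,t,x,\mu)=Y_{t,\th}^{t,\xi,\mu}$.

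For uniqueness of the value, suppose $\rho^1,\rho^2\in\cM$ are two equilibria for the same data, with equilibrium payoffs $V^1,V^2$. By the forward implication of Theorem~\ref{fbsdeNE}, each $V^i(\th,t,x,\mu)$ equals $Y^i_{t,\th}$, where $(X^i,Y^i,Z^i)$ solves \emph{the same} FBSDE system \eqref{fbsdesystem} with data $(t,\xi,\mu)$. Since Theorem~\ref{wellposed} guarantees this system has a unique solution in $\dbL_\boxtimes^2\times\dbL_\boxtimes^2\times\dbZ$, we get $Y^1\equiv Y^2$, hence $V^1(\th,t,x,\mu)=V^2(\th,t,x,\mu)$ for $\l_I$-a.e. $\th$ and all $(t,x)$. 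Equivalently, the contraction map $\Phi$ from the proof of Theorem~\ref{wellposed} has a single fixed point $\rho\in\cM$, which is the unique equilibrium, and $V$ is read off from the associated $Y$. Combining the two steps, the HMFG has a value and it is unique whenever $T\le\d_0$.

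The only point that goes beyond quoting the two theorems is the construction in the existence step of an \emph{admissible} optimal feedback: one must verify that the argmax of the Hamiltonian can be chosen measurably in the type $\th$ and Lipschitz in the state, so that $\a^*$ lands in $\cA$; this is where the structural hypotheses on $h$ in the control (e.g.\ coercivity/strict concavity giving a unique, regular maximizer) are used, and with those in place the rest is bookkeeping. A minor technical caveat is that ``$T\le\d_0$'' must be read as a bound on the length of the remaining horizon $[t,T]$, so that the contraction constant from Theorem~\ref{wellposed} applies verbatim for arbitrary initial time $t$.
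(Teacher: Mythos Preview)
Your proposal is correct and follows essentially the same route as the paper: combine Theorem~\ref{fbsdeNE} (equilibria $\Leftrightarrow$ solutions of \eqref{fbsdesystem}) with Theorem~\ref{wellposed} (unique solution of \eqref{fbsdesystem} for $T\le\d_0$) to conclude that the equilibrium $\rho$, and hence the value, is unique. Your extra care about constructing an admissible optimizer $\a^*\in\cA$ for \eqref{hNE} is more than the paper's own proof supplies---the paper only argues uniqueness and leaves existence implicit---so your version is in fact a bit more complete.
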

\begin{proof}
This is a natural consequence of Theorem \ref{wellposed} and Theorem \ref{fbsdeNE}. Since all the equilibrium $\rho$ can be characterized with strong solutions of the infinite-dimensional FBSDE \eqref{fbsdesystem}, which has a unique solution by Theorem \ref{wellposed}, thus equilibrium $\rho$ is also unique. When $\rho$ is fixed, each player solves a control problem, thus has a unique optimal value which is also the value of HMFG.
\end{proof}

\section{Propagation of Chaos}\label{sect-convergence}
In this section, we show that the equilibrium strategy of the HMFG is a good approximation of the corresponding $N-$player game NE when N is large. 
Furthermore, we need the following assumptions on $b$:
\begin{assum}\label{IContiuity}
    For all $(t,x,\mu,\a)$, the $K$, discretizations of $b$ $\si$, $H$ and $\pa_pH$ converge uniformly in $\dbL^2(\l_I)$ as $K\to\infty$ with rate of convergence $\rho_K$, i.e. $\forall (t,x,\mu,a)\in[0,T]\times\dbR\times\cM_0\times A$
    $$\int_I|b(\th,t,x,\mu,\a)-b(\frac{\lceil{\th K} \rceil }{K},t,x,\mu,\a)|^2d\l_I(d\th)\leq\rho_{K}^2,$$
    and the same holds for $\si$, $H$, and $\pa_pH$.
\end{assum}
\begin{rem}
     Assumption \ref{IContiuity} seems restrictive, but it is consistent with the common assumption in GMFG literature. The propagation of chaos in GMFG literature often assumes conditions that are equivalent to the convergence of truncated step graphon plus the uniform continuity on the graphon aggregate terms. See for example, \cite{aurell2021stochastic, caines2019graphon, cao2025}. The conditions proposed within are similar to Assumption \ref{IContiuity}.  
\end{rem}
Recall that the $N-$player game defined in Definition \ref{nplayer} has $N$ players distributed over a graph $G_K$ with $K$ nodes. The difference between the $N-$player game payoff and HMFG payoff using the control generated from the same control ensemble depends on the features of the network structure of the $N-$player game. Besides the number of players $N$ and the number of nodes $K$, we also need to define the constant:
$$n_{N,K}=\min_{1\leq l\leq K}|\cC_l|.$$
For each $\a\in\cA$, define the corresponding $N-$player control $\a^N\in\cA^N$: 
\begin{equation}\label{nalpha}
    \a_i^N:=\a^{\frac{l}{K}},\q\text{where }i\in \cC_l\q\forall i=1,......,N.
\end{equation}
\begin{lem}\label{propagation}
Let Assumption \ref{c2} and \ref{IContiuity} hold. 
Let $\mu\in\cX_0$, $\bar x\in\dbR^N$ and $\xi\in\dbL_\boxtimes^2(\O\times I;\dbR)$ such that $\xi\sim\mu$ with the following error estimate: denote $\bar x_{l}=(x_{l_1},......,x_{l_{|\cC_l|}})$ where $\{l_i\}_{i=1}^{|\cC_l|}\equiv\cC_l$, and
 $$\d_{N,K}:=\sum_{l=1}^K\frac{1}{K}\Big(W_1^2(\frac{1}{|\cC_l|}\sum_{i\in\cC_l}\d_{ x_i},\mu^{\frac{l}{K}})+\|\bar x_l\|_q^2\Big)+\int_{\frac{l-1}{K}}^{\frac{l}{K}}\dbE|\xi^{ \frac{l}{K}}-\xi^\th|^2\l_I(d\th).$$ 
Suppose $(\a^*,\rho^*)$ is a Nash Equilibrium of HMFG given initial $(t,\mu)$. Define $\a^{*N}$ by using $\a^*$ as in \eqref{nalpha}. Consider the state process $X^{t,\xi,\a^*}$ in HMFG system \ref{fbsdesystem}, and $N-$player dynamics $X^{N,t,\bar x,a^{*N}}$ follows \eqref{npd}, then for any $q>2$, there exists a constant $C$ such that:
\begin{equation}\label{chaos}
\sup_{s\in[t,T]}\dbE d_1^2(\phi_s^N,\cL(X_s^{t,\xi,\a^{*N}}))\leq C\Big(\rho_K^2+\d_{N,K}+\frac{1}{K n_{N,K}^3}\Big).
\end{equation}
 The constant $C$ depends on $T$, $L$, $L_{\a^*}$, and $q$ but not on $N,K$, nor $n_{N,K}$.
    
\end{lem}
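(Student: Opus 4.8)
The plan is a synchronous (proxy) coupling, adapted to the double limit $K\to\infty$, $n_{N,K}\to\infty$. For each node $l$ and each $i\in\cC_l$, introduce the McKean--Vlasov proxy $\hat X^i$, driven by the same Brownian motion $W^i$ as the $N$-player state and started from the same datum $x_i$, but governed by the limiting type-$l/K$ dynamics with the equilibrium law ensemble $\rho^*:=\cL(X^{t,\xi,\a^*})$ frozen in the measure slot and the equilibrium control $\a^{*,l/K}$. Writing $l=\lceil\th K\rceil$, so that $\phi_s^N(\th)=\frac1{|\cC_l|}\sum_{j\in\cC_l}\delta_{X_{s,j}^N}$ is constant on $((l-1)/K,l/K]$, Cauchy--Schwarz in $\th$ gives $\dbE d_1^2(\phi_s^N,\rho_s^*)\le\sum_{l=1}^K\int_{(l-1)/K}^{l/K}\dbE W_1^2(\phi_s^N(\th),\rho_s^*(\th))\,d\th$, and on each node the triangle inequality splits $W_1(\phi_s^N(\th),\rho_s^*(\th))$ into three pieces: (a) $\frac1{|\cC_l|}\sum_{j\in\cC_l}|X_{s,j}^N-\hat X_s^j|$, obtained by transporting the two empirical measures onto each other along the coupling; (b) $W_1\big(\frac1{|\cC_l|}\sum_j\delta_{\hat X_s^j},\frac1{|\cC_l|}\sum_j\cL(\hat X_s^j)\big)$, a law-of-large-numbers term; and (c) $W_1\big(\frac1{|\cC_l|}\sum_j\cL(\hat X_s^j),\rho_s^*(\th)\big)$, a bias term from the type-discretization and the initial data.

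For (a) I would subtract the equations for $X_{s,i}^N$ and $\hat X_s^i$; they share the noise and the initial state, so their difference is driven only by $d_1(\phi_r^N,\rho_r^*)$ (through the measure argument of $b,\si$, Assumption \ref{lipschitz}) and by the Lipschitz-in-$x$ terms for $b$, $\si$ and the control $\a^{*,l/K}$ (the last with constant $L_{\a^*}$). H\"older, Burkholder--Davis--Gundy and Gronwall on $[t,T]$ yield $\sup_s\dbE|X_{s,i}^N-\hat X_s^i|^2\le C\int_t^T\dbE d_1^2(\phi_r^N,\rho_r^*)\,dr$ with $C=C(T,L,L_{\a^*})$; averaging over $i\in\cC_l$, integrating in $\th$ and summing in $l$ shows that (a) contributes at most $C\int_t^s\dbE d_1^2(\phi_r^N,\rho_r^*)\,dr$ to $\dbE d_1^2(\phi_s^N,\rho_s^*)$. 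This is the self-referential term: once (b) and (c) are bounded uniformly in $s$ by the right side of \eqref{chaos}, Gronwall closes the argument; note that no smallness of $T$ is needed here, only for the existence of the equilibrium itself.

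Term (b) is a quantitative Wasserstein law of large numbers: for fixed $l,s$ the variables $\{\hat X_s^j\}_{j\in\cC_l}$ are independent, and standard moment estimates for SDEs together with the $\|\bar x_l\|_q^2$ part of $\d_{N,K}$ bound their $q$-th moments uniformly for $q>2$; the Fournier--Guillin estimate in dimension one (whose rate is exactly what requires a finite moment of order $q>2$) then controls $\dbE W_1^2$ of (b) on each node, and carrying the $\frac1K$ weights and the $|\cC_l|^{-1}\le n_{N,K}^{-1}$ factors through the sum produces the $\frac1{K\,n_{N,K}^3}$ term. For (c) one compares $\cL(\hat X_s^j)$ with $\rho_s^*(\th)=\cL(X_{s,\th}^{t,\xi,\a^*})$: since $l/K$ is itself a node point, the flow of the proxy SDE from $\mu^{l/K}=\cL(\xi^{l/K})$ coincides with $\rho_s^*(l/K)$, so the proxy differs from the equilibrium process at index $\th$ only through (i) its initial datum ($\delta_{x_j}$ vs.\ $\cL(\xi^\th)$) and (ii) the type variable ($l/K$ vs.\ $\th$) in all coefficients. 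For (i), averaging over $j\in\cC_l$ makes $\frac1{|\cC_l|}\sum_j\delta_{x_j}$ $W_1$-close to $\mu^{l/K}$ by the $W_1$-stability of the SDE flow, and $\mu^{l/K}=\cL(\xi^{l/K})$ is close to $\cL(\xi^\th)$, producing exactly the $W_1^2(\frac1{|\cC_l|}\sum_i\delta_{x_i},\mu^{l/K})$ and $\int_{(l-1)/K}^{l/K}\dbE|\xi^{l/K}-\xi^\th|^2\,d\th$ contributions to $\d_{N,K}$. For (ii), I would use the optimality relations \eqref{hNE}--\eqref{bpH} to write the equilibrium drift as $\hat b(\th,r,\cdot,Z_{r,\th},\rho_r^*)$ with $|Z_{\cdot,\th}|\le M$ and then invoke a priori stability of the FBSDE \eqref{fbsdesystem} in the type variable: since the $K$-discretizations of $b,\si,H,\pa_pH$ converge at rate $\rho_K$ in $L^2(\l_I)$ (Assumption \ref{IContiuity}) and the relevant arguments stay in $\{|z|\le M\}$, the map $\th\mapsto\cL(X_{s,\th}^{t,\xi,\a^*})$ is $L^2(\l_I)$-continuous up to an error $O(\rho_K)$, which gives the $\rho_K^2$ term. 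Standard SDE stability then assembles (i) and (ii) into an $L^2(\l_I)$ bound on (c) by $C(\rho_K^2+\d_{N,K})$, uniformly in $s$.

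The main obstacle I anticipate is step (c)(ii): transferring Assumption \ref{IContiuity} through the coupled infinite-dimensional FBSDE \eqref{fbsdesystem}, i.e.\ proving $L^2(\l_I)$-continuity of the equilibrium law ensemble $\rho^*$ in the type index with constants independent of $N,K$ and uniform in $s$. This requires passing the $\rho_K$ rate through the decoupling-field/HJB layer so that $\hat b,\hat F$ inherit it on the compact set $\{|z|\le M\}$, and then running the FBSDE a priori estimates in the index variable --- essentially a parametrized version of the contraction in the proof of Theorem \ref{wellposed}. The secondary technical point is the quantitative law of large numbers in (b) for the non-identically-distributed proxies, which is where the moment hypothesis $q>2$ and the precise $\frac1{K\,n_{N,K}^3}$ rate are used; the pathwise Gronwall in (a) and the classical SDE stability in (c)(i) are routine.
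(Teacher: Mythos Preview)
Your approach is correct and reaches the same endpoint, but the decomposition differs from the paper's. The paper factors through an \emph{intermediate self-consistent $K$-node mean-field system}: it defines a process $X^{K}_{l,s}$ (one per node, driven by $B^{l/K}$ from the Fubini extension, with initial $\xi^{l/K}$) whose dynamics depend on its own step-function law ensemble $\psi^K$, and then splits $d_1(\phi^N_s,\rho^*_s)\le d_1(\phi^N_s,\psi^K_s)+d_1(\psi^K_s,\rho^*_s)$. The first piece is handled as a black-box citation to an existing multi-type propagation-of-chaos estimate (Remark~8.5(ii) in \cite{mou2019wellposedness}), which absorbs your steps~(a) and~(b) simultaneously and delivers the $W_1^2$-initial-data and $n_{N,K}^{-3}$ terms directly; the second piece is a Gronwall comparison between the $K$-node and the continuum equilibrium dynamics, where the index-discretization error enters through Assumption~\ref{IContiuity} and a stability estimate for $\partial_x u^{\rho^*}_\th$ in $\th$ (the paper invokes equation~(2.46) of \cite{ZhangThesis}). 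Your route instead freezes $\rho^*$ in the proxy and closes a single Gronwall loop on $\dbE d_1^2(\phi^N_s,\rho^*_s)$, which is more self-contained and avoids the external MFG citation, at the cost of having to handle the non-i.i.d.\ Wasserstein LLN in~(b) by hand. The ``main obstacle'' you flag in~(c)(ii) --- propagating $\rho_K$ through the HJB/decoupling layer --- is exactly the step the paper also isolates, and it resolves it precisely as you suggest: the bound $|Z|\le M$ from \eqref{zNu} reduces matters to Lipschitz estimates on $\hat b,\hat F$ on a compact $z$-set, and the $\th$-stability of $\partial_x u^{\rho^*}_\th$ then follows from the quasilinear PDE estimate. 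So your anticipated difficulty is real but already has the tool you name.
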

\begin{proof}
The approximation is done in two steps. First step, we approximate the difference between the $N-$player game on graph $G_K$ and $K-$node mean field game. Second step, we compare the difference between the $K-$node mean field game with HMFG. For the first step, we will use the classical results on propagation of chaos in MFG theory. The second step needs Assumption \ref{IContiuity}.

We first define the intermediate $K-$node mean field process using Brownian motion $B$ in HMFG:
\begin{equation}\label{intermediate2}
\begin{aligned}
     X_{l,s}^{K,t,\xi,\a^{*N}}=&\xi^{\frac{l}{K}}+\int_t^s b(\frac{l}{K},r, X_{i,r}^{K,t,\xi,\a^{*N}}, \psi_r^{K},a^{*N}_i(r, X_{i,r}^{K,t,\xi,\a^{*N}}))dr\\
    &+\int_t^s \si(\frac{l}{K},r, X_{i,r}^{K,t,\xi,\a^{*N}},\psi_r^{K})dB_r^{\frac{l}{K}}
\end{aligned}
\end{equation}
where $\psi_s^{K}(\th)=\sum_{l=1}^K\dbI_{[\frac{l-1}{K},\frac{l}{K})}(\th)\cL( X_{l,s}^{K,t,\xi,\a^{*N}})$. Different from $\phi_s^N$, $\psi_s^{K}(\th)$ is deterministic. By standard MFG theory,  $X_{l}^{K,t,\xi,\a^{*N}}$ is the limiting process of $ X_{i}^{N,t,\bar x,a^{*N}}$ for $i\in\cC_l$ when $n_{N,K}$ goes to infinity as $N$ goes to infinity while $K$ stays the same. Thus we can apply the standard result in MFG, for example Remark 8.5 (ii) in \cite{mou2019wellposedness}, for any $q>2$, there exists a $C$ such that 
\begin{equation}\label{mfgpc}
    \sup_{s\in[t,T]}\dbE[W_1^2(\frac{1}{|\cC_l|}\sum_{j\in\cC_l}\d_{ X_{s,j}^{N,t,\bar x,a^{*N}}},\cL( X_{s,j}^{K,t,\xi,a^{*N}}))
     ]\leq C W_1^2(\frac{1}{|\cC_l|}\sum_{i\in\cC_l}\d_{x_i},\mu^{l})+\frac{C}{n_{N,K}^3}(1+\|\bar x_l\|_q^q)
\end{equation}
Here $ C$ depends on $T$, $L$ and $q$. Thus the distance between $\psi^{K}$ and $\phi^N$ is
\begin{equation}\label{1st}
    \begin{aligned}
        \sup_{s\in[t,T]}\dbE d_1^2(\psi_s^{K},\phi_s^N)=&\frac{1}{K}\sum_{k=1}^K\sup_{s\in[t,T]}\dbE[W_1^2(\frac{1}{|\cC_k|}\sum_{j\in\cC_k}\d_{X_{s,j}^{N,t,\bar x,a^{*N}}},\cL( X_{s,j}^{K,t,\xi,a^{*N}}))]\\
        \leq&\frac{C}{K}\sum_{k=1}^KW_1^2(\frac{1}{|\cC_k|}\sum_{i\in\cC_k}\d_{ x_i},\mu^{\frac{k}{K}})+\frac{C}{n_{N,K}^3}(1+\|\bar x_k\|_q^q)\leq\d_{N,K}
    \end{aligned}
\end{equation}

We will now analyze the difference between the step law ensemble function of $K-$node mean field process $\psi^K$ and the law ensemble of the HMF process $\rho^*$. The estimates rely on the Assumption \ref{IContiuity}.
\begin{equation}\label{3rd}
    \begin{aligned}
        &d_1^2(\psi_s^{K},\rho_s^*))\\
        \leq&\int_IW_1^2\big(\sum_{l=1}^K \dbI_{[\frac{l-1}{K},\frac{l}{K})}(\th)\cL(X_{l,s}^{K,t,\xi,\a^{*N}}),\rho_s^*) \big)\l_I(d\th)\\
        \leq&C\int_I\dbE|\xi^{\frac{\lceil{\th K} \rceil}{K}}-\xi^\th|^2+\dbE\Big[\int_t^s|\si(\frac{\lceil{\th K} \rceil}{K},r,X_{r,\th}^{t,\xi,\a^*},\psi_r^{K})-\si(\th,r,X_{r,\th}^{t,\xi,\a^*},\rho_r^*)|^2dr\\
        &+|\int_t^s| b(\frac{\lceil{\th K} \rceil}{K},r,X_{r,\th}^{t,\xi,\a^*},\psi_r^{K},\a^{*\frac{\lceil{\th K} \rceil}{K}}(r,X_{r,\th}^{t,\xi,\a^*}))-b(\th,r,X_{r,\th}^{t,\xi,\a^*},\rho_r^*,\a^{*\th}(r,X_{r,\th}^{t,\xi,\a^*}))|dr|^2\Big]\l_I(d\th)\\
        \leq&C\Big(\d_{N,K}+\int_t^sd_1^2(\psi_r^{K},\rho_r^*)dr+\int_I\dbE\Big[\int_t^s|\si(\frac{\lceil{\th K} \rceil}{K},r,X_{r,\th}^{t,\xi,\a^*},\rho_r^*)-\si(\th,r,X_{r,\th}^{t,\xi,\a^*},\rho_r^*)|^2dr\\
        &+\int_t^s|\pa_p H(\frac{\lceil{\th K} \rceil}{K},r,X_{r,\th}^{t,x,\a^*},\pa_xu_{\frac{\lceil{\th K} \rceil}{K}}^{\rho^*}(r,X_{r,\th}^{t,x,\a^*}),\rho_r^*)-\pa_p H(\th,r,X_{r,\th}^{t,x,\a^*},\pa_xu_{\th}^{\rho^*}(r,X_{r,\th}^{t,x,\a^*}),\rho_r^*)|^2dr\Big]\l_I(d\th)
    \end{aligned}
\end{equation}
Since $\pa_pH$ is uniformly Lipschitz in $p$ by Assumption \ref{c2} and $\pa_pH(\frac{\lceil{\th K} \rceil}{K},\cdot)$ and $\pa_pH(\th,\cdot)$ are close by Assumption \ref{IContiuity}, it suffices to check the difference between $\pa_xu_{\frac{\lceil{\th K} \rceil}{K}}^{\rho^*}$ and $\pa_xu_{\th}^{\rho^*}$. By equation (2.46) in \cite{ZhangThesis}: we have:
\begin{equation*}
\begin{aligned}
    &\int_I|\pa_xu_{\frac{\lceil{\th K} \rceil}{K}}^{\rho^*}(t,x)-\pa_xu_{\th}^{\rho^*}(t,x)|\l_I(d\th)\\
    \leq &\int_I(1+\sqrt{T-t})\Big(\|G(\frac{\lceil{\th K} \rceil}{K},\cdot)-G(\th,\cdot)\|_\infty+\|\si(\frac{\lceil{\th K} \rceil}{K},\cdot)-\si(\th,\cdot)\|_\infty\\
    &+\|H(\frac{\lceil{\th K} \rceil}{K},\cdot)-H(\th,\cdot)\|_\infty\Big)\l_I(d\th)\\
    \leq& C\rho_K
\end{aligned}
\end{equation*}
Thus by Gronwall's inequality, we have
\begin{equation}\label{2nd}
    d_1^2(\psi_s^{K},\rho_s^*)\leq C(\d_{N,K}+\rho_K+\int_t^sd_1^2(\psi_r^{K},\rho_r^*)dr)\leq C(\d_{N,K}+\rho_K)
\end{equation}
Combining \eqref{1st} and \eqref{2nd}, we get the desired result.
\end{proof}
\begin{thm}
     Let Assumption \ref{c2} and \ref{IContiuity} hold. Given initial time $t$, let $\mu\in\cX_0$, $\bar x\in\dbR^N$ and $\xi\in\dbL_\boxtimes^2(\O\times I;\dbR)$ such that $\xi\sim\mu$ be the initial conditions with the same error estimate as in Lemma \ref{propagation}. Suppose $(\a^*,\rho^*)$ is a Nash Equilibrium of HMFG given initial $(t,\mu)$. Define $\a^{*,N}$ by using $\a^*$ as in \eqref{nalpha}.  Then there exists an $\e_{N,K}$ such that $\lim_{N,K\to\infty}\e_{N,K}=0$ and:
    $$J^N(i,t,\bar x,\a^{*,N})\geq J^N(i,t,\bar x,\a^i,\a^{*,N,-i})-\e_{N,K}\quad\forall\a^{i}\in\cA^i\quad\forall i=1,......,N.$$
\end{thm}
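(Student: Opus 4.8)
The plan is to sandwich the two $N$-player payoffs between two copies of the HMFG equilibrium value. Fix a player $i\in\cC_l$ and a deviation $\a^i\in\cA^i$. I would establish two one-sided estimates, each comparing an $N$-player payoff with the payoff of the type-$l/K$ player in the HMFG \emph{against the frozen equilibrium law ensemble} $\rho^*$, up to an error $\e_{N,K}$ that is uniform in $i$ and $\a^i$: namely
$J^N(i,t,\bar x,\a^{*,N})\ge J(l/K,t,x_i,\a^{*,l/K};\a^*,\rho^*)-\e_{N,K}$, proved using Lemma \ref{propagation} directly since here all players use the equilibrium profile; and
$J^N(i,t,\bar x,\a^i,\a^{*,N,-i})\le J(l/K,t,x_i,\a^i;\a^*,\rho^*)+\e_{N,K}$, proved by combining a single-player-deviation stability estimate with Lemma \ref{propagation}. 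Granting these, the HMFG equilibrium property of Definition \ref{GMFGNE} gives $J(l/K,t,x_i,\a^i;\a^*,\rho^*)\le V(l/K,t,x_i,\mu)=J(l/K,t,x_i,\a^{*,l/K};\a^*,\rho^*)$ (note $\a^i$, being Lipschitz in $x$, is an admissible HMFG strategy for the type-$l/K$ player), and subtracting the two estimates yields the theorem with $2\e_{N,K}$ in place of $\e_{N,K}$.

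The first new ingredient is a \textbf{single-deviation stability estimate}. Writing $\phi^{N,\a^i}$ for the empirical law ensemble when player $i$ switches to $\a^i$ while all others keep $\a^{*,N}$, I claim
\[
\sup_{s\in[t,T]}\dbE\, d_1^2\big(\phi_s^{N,\a^i},\phi_s^N\big)\le \frac{C}{K\,n_{N,K}},
\]
with $C$ depending on $T$, $L$, and the Lipschitz modulus of $\cA^i$ but not on $\a^i$, $N$ or $K$. The argument: $X_{\cdot,i}^{N,\a^i}$ is bounded in $L^2$ uniformly (the standing linear-growth bounds on $b,\si$ in Assumption \ref{lipschitz}), so the deviator's slot contributes only $O(1/|\cC_l|)\le O(1/n_{N,K})$ to the $W_1$-distance between the two node-$l$ empirical measures; every other player $j\ne i$ (in any node) follows the same SDE under $\a^{*,N}_j$, with coefficients coupled to the rest only through $\phi^{N,\cdot}$, so propagating a Gronwall estimate simultaneously over all $j\ne i$ gives $\dbE d_1^2(\phi_s^{N,\a^i},\phi_s^N)\le \tfrac{C}{Kn_{N,K}}+C\int_t^s\dbE d_1^2(\phi_r^{N,\a^i},\phi_r^N)\,dr$, and Gronwall closes the loop. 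The factor $1/(Kn_{N,K})$ rather than $1/N$ is exactly the cost of a unilateral deviation: the deviator is a $1/|\cC_l|$ fraction of its own node, which itself has weight $1/K$ in $d_1$.

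The second ingredient is \textbf{turning measure-closeness into payoff-closeness}. Since the $N$-player Brownian motions $W^i$ and the HMFG Brownian motions $B^\th$ live on independent families, I cannot couple the paths directly, so I would freeze the measure: introduce the auxiliary $\hat X_{\cdot,i}$ solving player $i$'s $N$-player SDE under $\a^i$ but with $\phi^{N,\a^i}$ replaced by the deterministic $\rho^*$, still driven by $W^i$. Then decompose $J^N(i,t,\bar x,\a^i,\a^{*,N,-i})-J(l/K,t,x_i,\a^i;\a^*,\rho^*)$ into: (i) the effect of swapping $\phi^{N,\a^i}$ for $\rho^*$ inside $F,G$, bounded by $L\,T\,(\sup_s\dbE d_1^2(\phi_s^{N,\a^i},\rho^*_s))^{1/2}$ via Assumption \ref{costassum}; (ii) the difference between $X_{\cdot,i}^{N,\a^i}$ and $\hat X_{\cdot,i}$, controlled by an SDE-stability/Gronwall estimate of the same order $O((\sup_s\dbE d_1^2(\phi_s^{N,\a^i},\rho^*_s))^{1/2})$; and (iii) the difference between $\hat X_{\cdot,i}$ and the HMFG state $\tilde X$ of the type-$l/K$ player started at $x_i$, using $\a^i$ against $\rho^*$ — this piece is \emph{zero}, because the two processes solve the same SDE with the same Lipschitz coefficients (hence have the same law), while $F,G$ depend on the state only through that law. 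Finally $\sup_s\dbE d_1^2(\phi_s^{N,\a^i},\rho^*_s)\le 2\sup_s\dbE d_1^2(\phi_s^{N,\a^i},\phi_s^N)+2\sup_s\dbE d_1^2(\phi_s^N,\rho^*_s)$, bounded by the single-deviation estimate and Lemma \ref{propagation} respectively; the analogous (simpler) chain for $J^N(i,t,\bar x,\a^{*,N})$ needs only Lemma \ref{propagation}. One may then take $\e_{N,K}=C\big(\rho_K^2+\d_{N,K}+\tfrac{1}{K n_{N,K}}\big)^{1/2}$, which tends to $0$ as $N,K\to\infty$ with $n_{N,K}\to\infty$, using Assumption \ref{IContiuity} and the assumed convergence of the initial data.

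The main obstacle I anticipate is making the single-deviation stability estimate \emph{uniform over all admissible deviations} $\a^i$: the Gronwall constant in step (ii) a priori involves the Lipschitz modulus of $\a^i$ in $x$, so the bound is only uniform if $\cA^i$ is taken with a fixed Lipschitz constant (implicit in the definition of $\cA^\th$), in which case $C$ depends only on the data; this should be stated explicitly. A secondary technical point is the bookkeeping in the Gronwall-over-all-$j\ne i$ step, where one must keep the deviator's single uncontrolled slot separate from the $n_{N,K}-1$ controllable slots in its node and from all players in the other nodes, so that only an $O(1/(Kn_{N,K}))$ residue survives.
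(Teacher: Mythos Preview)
Your proposal is correct and follows essentially the same approach as the paper: a single-deviation stability estimate of order $C/(Kn_{N,K})$, combined with Lemma \ref{propagation} to replace the (deviated) empirical ensemble by $\rho^*$, then a frozen-measure auxiliary process to convert measure-closeness into payoff-closeness, and finally the HMFG equilibrium inequality. The only cosmetic difference is that the paper routes the single-deviation estimate through an $(N{-}1)$-player system $\tilde\phi^N$ (player $i$ removed) rather than your direct comparison of $\phi^{N,\a^i}$ with $\phi^N$; your explicit flagging of the uniformity-in-$\a^i$ issue (resolved by the common Lipschitz bound in $\cA^\th$) is a point the paper leaves implicit.
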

\begin{proof}
    For simplicity, we show for a fixed player $i$ in node $l_i$ with fixed initial time $0$.
    
    Step 1. We show that, in $N$-player game when only a single player change her strategy, the law ensemble of all players will only deviate in a scale of $\frac{1}{\sqrt{N}}$. Fix arbitrary $\a^i\in\cA^i$. Let us denote the $N$-player dynamic $(X_{s,1}^{N,\a^{*,-i}},......,X_{s,n}^{N,\a^{*,-i}})$ as each player $j\neq i$ uses strategy $\a^{*\frac{l}{K}}$ where $j\in\cC_l$ and player $i$ uses strategy $\a^i$.
    
    Let us consider another $(N-1)$-dimensional processes with all players expect $i$, and $\forall j\in\{1,......,N\} $, $j\neq i $, player $j$ use strategy $\a^{*\frac{l}{K}}$ where $j\in\cC_l$. The dynamic of the player $j$ is the following
\begin{equation}
    \tl X_{t,j}^{N,\a^{*}}=x_j+\int_0^t b(\frac{l}{K},r,\tl X_{s,j}^{N,\a^*},\tl\phi_s^{N},\a_{s}^{*,\frac{l}{K}})ds+\int_0^t \si(\frac{l}{K},s,\tl X_{s,j}^{N,\a^*},\tl\phi_s^{N})dB_s^j
\end{equation}
where $\a_{s}^{*,\frac{l}{K}}:=\a^{*,\frac{l}{K}}(s,\tl X_{s,i}^{N,\a^{*}})$ for simplicity and for each $\th\in I$,
$$\tl\phi_s^{N}(\th)=\frac{1}{|\cC_{\left \lceil{\th K}\right \rceil }|-\dbI_{\{i\in\cC_{\left \lceil{\th K}\right \rceil }\}}}\sum_{j\in\cC_{\left \lceil{\th K}\right \rceil },j\neq i}\d_{\tl X_{s,j}^{N,\a^*}}.$$
Then we compare $\tl\phi_s^{N}$ with $\phi_s^{N}$:
\begin{equation}
    \begin{aligned}
        \dbE d_1^2(\tl\phi_s^{N},\phi_s^{N})=&\frac{1}{K}\dbE\Big[\sum_{k=1}^KW_1(\frac{1}{|\cC_k|-\dbI_{\{i\in\cC_k\}}}\sum_{j\in\cC_k,j\neq i}\d_{\tl X_{s,j}^{N,\a^*}},\frac{1}{|\cC_k|}\sum_{j\in\cC_k}\d_{X_{s,j}^{N,\a^{*,-i}}})\Big]^2\\
        \leq&\frac{C}{K}\dbE\Big[\sum_{k\neq l_i}\frac{1}{\cC_k}\sum_{j\in\cC_k}|\tl X_{s,j}^{N,\a^*}-X_{s,j}^{N,\a^{*,-i}}|^2+\frac{1}{\cC_{l_i}-1}\sum_{j\in\cC_{l_i},j\neq_i}|\tl X_{s,j}^{N,\a^*}-X_{s,j}^{N,\a^{*,-i}}|^2\\
        &+\frac{1}{\cC_{l_i}(\cC_{l_i}-1)}\sum_{j\in\cC_{l_i},j\neq_i}|X_{s,i}^{N,\a^{*,-i}}-X_{s,j}^{N,\a^{*,-i}}|^2\Big]\\
        \leq&\frac{C}{K}\int_0^s\dbE d_2^2(\tl\phi_r^{N},\phi_r^{N})dr +\frac{C}{|C_{l_i}|}\\
        \leq&\frac{C}{Kn_{N,K}}
    \end{aligned}
\end{equation}
The exact value of constant $C$ changes between lines and depends only on $L$, $T$ and $L_{\a^*}$. The first inequality follows from the fact that for any empirical measure on $\cP(\dbR)$, 
$$W_2^2(\frac{1}{n-1}\sum_{i=1}^{n-1}\d_{x_i},\frac{1}{n}\sum_{i=1}^{n-1}\d_{x_i})\leq\frac{1}{N(N-1)}\sum_{i=1}^{n-1}|x_i-x_n|^2$$
The second inequality holds because all the state process are in $\dbS^2$ under the running Assumption \ref{lipschitz} and standard comparison principle is SDE theory. The last inequality follows from Gronwell's inequality.

Furthermore, by Theorem \ref{propagation}, we have
$$\sup_{t\in[0,T]}\dbE d_1^2(\phi^N_t,\rho_t^*)\leq\sup_{t\in[0,T]}\dbE d_2(\tl\phi^N_t,\rho_t^*)+\sup_{t\in[0,T]}\dbE d_2(\tl\phi^N_t,\phi^N_t)\leq C\Big(\rho_{K}+\d_{N,K}+\frac{1}{K( n_{N,K}-1)^3}\Big)$$
Thus it suffices to use $\rho^*$ instead of $\phi^N$ in player $i-th$ optimization problem in $N$-player games since $\phi^N$ will perturb if player $i$ use different control $\a^i$ while $\rho^*$ will not. 
    
    Step 2, Consider the optimal control problem
    \begin{equation}\label{ioptimal}
    \begin{aligned}
        V_i(0, x_i;\rho^*)=\sup_{\a^i\in\cA^i}\dbE[G(\frac{l_i}{k},\tl X_{T,i}^{N,\rho^*,\a^i},\rho^*_T)+\int_0^T F(\frac{l_i}{k},s,\tl X_{s,i}^{N,\rho^*,\a^i},\rho_s^*,\a_s^{i})ds]\\    
        \tl X_{t,i}^{N,\rho^*,\a^i}=x_i+\int_0^s b(\frac{l_i}{K},s,\tl X_{s,i}^{N,\rho^*,\a^i},\rho^*_s,\a_{s}^{i})ds+\int_0^t \si(\frac{l_i}{K},s,\tl X_{s,u}^{N,\rho^*,\a^i},\rho_s^*)dB_s^i
    \end{aligned}
    \end{equation}
    Since $(\a^*,\rho^*)$ is a NE of HMFG, then for fixed population $\rho^*$, $\a^{*l_i}$ solves the optimization problem of type $l_i$ player, which is exactly the problem described in \eqref{ioptimal}.

    Now let us compare optimization problem \eqref{ioptimal} and the game problem for player $i$. First compare the dynamics $\tl X_{t,i}^{N,\rho^*,\a^{*l_i}}$ and $X_{s,i}^{N,\a^*}$:
    \begin{equation}
        \begin{aligned}
            \dbE\sup_{[0,T]}|\tl X_{t,i}^{N,\rho^*,\a^{*,l_i}}-X_{s,i}^{N,\a^*}|^2\leq\dbE C\int_0^Td_1^2(\rho^*_t,\phi_t^N)dt\leq C\Big(\rho_{K}+\d_{N,K}+\frac{1}{K( n_{N,K}-1)^3}\Big)
        \end{aligned}
    \end{equation}
   By Assumption \ref{costassum}, the difference in payoff is:
    \begin{equation*}
        \begin{aligned}
            &|J^N(i,0,\bar x,\a^{*N})-J(l_i,0,\bar x_i,\a^{*,l_i};\rho^*)|^2\\
            \leq&C\dbE\Big[|\tl X_{T,i}^{N,\rho^*,\a^{*,l_i}}-X_{T,i}^{N,\a^*}|^2+d_1^2(\phi_T^N,\rho_T^*)+\int_0^T |\tl X_{t,i}^{N,\rho^*,\a^{*,l_i}}-X_{t,i}^{N,\a^*}|^2+d_1^2(\phi_t^N,\rho_t^*)dt\Big]\\
            \leq&C\Big(\rho_{K}+\d_{N,K}+\frac{1}{K( n_{N,K}-1)^3}\Big)
        \end{aligned}
    \end{equation*}
    Again, $C$ is a constant that changes between lines but depends only on $T$, $L$, $L_{a^*}$ and $K$. Thus it suffices to take $\e_{N,K}=C\Big(\rho_{K}+\d_{N,K}+\frac{1}{K( n_{N,K}-1)^3}\Big)^\frac{1}{2}$

\end{proof}

\section{Master Equation}\label{sect-MasterEq}
The goal of this section is to develop the so-called master equation for HMFG. It is a PDE that represents the dynamics of the agent's payoffs. In this project, we only derive the PDE as the decoupling field of the infinite-dimensional FBSDE system \eqref{fbsdesystem} using the It\^o formula in Theorem \ref{ito} on the marginal law ensemble. The global well-posedness of the master equation is a serious issue and will be investigated in our future work. First we will derive the flow property which resembles the MFG theory.
By Theorem \ref{wellposed}, we have $V(\th,t,x,\mu)=Y_{t,\th}^{t,x,\mu}$ where $(X^{t,\xi,\mu},Y^{t,\xi,\mu},Z^{t,\xi,\mu})$ solves \eqref{fbsdesystem}. Denote  $\rho_s=\cL(X_{s}^{t,\xi,\mu})$ for $s\in[t,T]$. By \eqref{zNu} in the proof of Theorem \ref{wellposed}, we know 
\begin{equation}\label{z2}
    Z_{s,\th}^{t,\xi,\mu}=\si(\th,s,X_{s,\th}^{t,\xi,\mu},\rho_s)\pa_xu_\th^{\rho}(s,X_{s,\th}^{t,\xi,\mu})
\end{equation}
Furthermore, suppose $V$ satisfies $Y_{s,\th}^{t,\xi,\mu}=V(\th,s,X_{s,\th}^{t,x,\mu},\rho_s)$, we have for all $s\in[t,T]$
\begin{equation}\label{y2}
    Y_{s,\th}^{t,\xi,\mu}=Y_{s,\th}^{s,X_s^{t,\xi,\mu},\rho_s}=V(\th,s,X_{s,\th}^{t,x,\mu},\rho_s)=u_\th^{\rho}(s,X_s^{t,x,\mu})
\end{equation}
thus we further have $\pa_xV(\th,s,X_{s,\th}^{t,x,\mu},\rho_s)=\pa_xu_\th^{\rho}(s,X_s^{t,x,\mu})$. 
Plug \eqref{z2}-\eqref{y2} into \eqref{fbsdesystem}, then $(X^{t,\xi,\mu},Y^{t,\xi,\mu},Z^{t,\xi,\mu})$ also solves the following decoupled FBSDE:
\begin{equation}\label{decoupledfbsde}
    \begin{cases}
         X_{s,\th}^{t,\xi,\mu}=&\xi^\th+\int_t^s  \pa_p H(\th,r,X_{r,\th}^{t,\xi,\mu},\pa_xV(\th,r,X_{r,\th}^{t,\xi,\mu},\rho_r)dr + \int_t^s \si(\th,r,X_{r,\th}^{t,\xi,\mu},\rho_r)dB_r^\th\\
         Y_{s,\th}^{t,\xi,\mu}=&G(\th,X_{T,\th}^{t,\xi,\mu},\rho_T)+\int_s^T H(\th,r,X_{r,\th}^{t,\xi,\mu},\pa_xV(\th,r,X_{r,\th}^{t,\xi,\mu},\rho_r),\rho_r)\\
         &-\pa_p H(\th,r,X_{r,\th}^{t,\xi,\mu},\pa_xV(\th,r,X_{r,\th}^{t,\xi,\mu},\rho_r),\rho_r)\pa_xV(\th,r,X_{r,\th}^{t,\xi,\mu},\rho_r)dr-\int_s^T Z_{r,\th}^{t,\xi,\mu} dB_r^\th
    \end{cases}
\end{equation}
Apply the It\^o Formula derived in Theorem \ref{ito} on $V(s,X_{s,\th}^{t,x,\mu},\rho_s)$, we have 
\begin{equation}\label{master1}
    \begin{aligned}
        V(\th,s,X_{s,\th}^{t,\xi,\mu},\rho_s)=&V(\th,t,\xi^\th,\mu)+\int_t^s\Big[\pa_t V(\th,r,X_{r,\th}^{t,\xi,\mu},\rho_r)\\
        &+\pa_x V(\th,r,X_{r,\th}^{t,\xi,\mu},\rho_r)\pa_pH (\th,r,X_{r,\th}^{t,\xi,\mu},\partial_x u^\rho_\th(r,X_{r,\th}^{t,\xi,\mu}),\rho_r)\\
        &+\frac{1}{2}\pa_{xx}V(\th,r,X_{r,\th}^{t,\xi,\mu},\rho_r)|\si(\th,r,X_{r,\th}^{t,\xi,\mu},\rho_r)|^2\\
        &+\int_{I}\tl \dbE\Big(\partial_{\tl x}\frac{\d V}{\d \mu}(\th,r,X_{r,\th}^{t,\xi,\mu},\rho_r,\tl X_{r,\tl\th}^{t,\tl \xi,\mu},\tl\th)\Big) \partial_p H(\tl\th,s,\tl X_{r,\tl \th}^{t,\tl\xi,\mu},\partial_x u^\rho_{\tl\th}(s,\tl X_{r,\tl \th}^{t,\tl\xi,\mu},\rho_r)\\
        &+\frac{1}{2}\partial_{\tl x\tl x}\frac{\d V}{\d \mu}(\th,r,X_{r,\th}^{t,\xi,\mu},\tl X_{r,\tl\th}^{t,\tl\xi,\mu}\rho_r,\tl\th)|\si(\tl\th,r,\tl X_{r,\tl\th}^{t,\tl\xi,\mu},\rho_r)|^2 \Big)\l_I(d\tl\th )\Big]dr
    \end{aligned}
\end{equation}
Note that $\tl X_{r,\tl \th}^{t,\xi,\mu}$ has the same distribution as $ X_{r,\tl \th}^{t,\tl\xi,\mu}$ but is independent with $\tl X_{r,\tl \th}^{t,\xi,\mu}$ and $\tl\dbE$ means taking the expectation with respect to $\tl X_{r,\tl \th}^{t,\xi,\mu}$ only.

Combine the backward equation in \eqref{decoupledfbsde} and equation \eqref{master1}, we have master equation of HMFG:
\begin{equation}\label{master}
    \begin{aligned}
        0=&\pa_t V(\th,t,x,\rho)+H(\th,t,x,\pa_xV,\rho)+\frac{1}{2}\pa_{xx}V(\th,t,x,\rho)|\si(\th,t,x,\rho)|^2\\
        &+\int_{I} \dbE\Big(\partial_x\frac{\d V}{\d \mu}(\th,t,x,\rho,\xi^{\tl\th},\tl\th)\Big) \partial_y H(\tl\th,t, \xi^{\tl\th},\partial_x V(\tl\th,t, \xi^{\tl\th},\rho),\rho)\\
        &+\frac{1}{2}\partial_{xx}\frac{\d V}{\d \mu}(\th,t,x,\xi^{\tl\th},\rho,\tl\th)|\si(\tl\th,t,\xi^{\tl\th},\rho)|^2 \Big)\l_I(d\tl\th ),\\
        V(\th,T,&x,\rho)=G(\th,x,\rho),\quad\forall (\th,t,x,\rho)\in I\times[0,T]\times\dbR\times\cM_0, \text{ where } \cL(\xi)=\rho
    \end{aligned}
\end{equation}
\begin{defn}
    Denote $V$ is a classical solution of \eqref{master} if \\
    (i) $\forall\th\in I$, $\pa_tV(\th,\cdot)$, $\pa_xV(\th,\cdot)$, and $\pa_{xx}V(\th,\cdot)$ exist and are continuous in $(t,x,\rho)$ ;\\
    (ii) $\forall\th,\tl\th\in I$$\pa_{\tl x}\frac{\d V}{\d \mu}(\th,\tl\th,\cdot)$ and $\pa_{\tl x\tl x}\frac{\d V}{\d \mu}(\th,\tl\th,\cdot)$ exist and are continuous in $(t,x,\rho,\tl x)$;\\
    (iii) V solves \eqref{master}.
\end{defn}
Then we have the following theorem.
\begin{thm}
    If the master equation \eqref{master} has a classical solution $V$, then the FBSDE system \eqref{fbsdesystem} has a unique solution and $V$ is a decoupling field of \eqref{fbsdesystem}, i.e. 
    \begin{equation}\label{decouplingfield}
        (Y_{s,\th}^{t,\xi}, Z_{s,\th}^{t,\xi})=(V(\th,s, X_{s,\th}^{t,\xi},\rho_s),\si(\th,s,X_{s,\th}^{t,\xi},\rho_s)\pa_xV(\th,s,X_{s,\th}^{t,\xi},\rho_s))
    \end{equation}
\end{thm}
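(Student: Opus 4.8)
The plan is to argue both directions: first build an explicit solution of \eqref{fbsdesystem} out of the given classical solution $V$, then show that every solution must coincide with it, and finally read off \eqref{decouplingfield}.

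\emph{Construction.} Freeze $V$ and consider the forward McKean--Vlasov SDE $dX_{s,\th}=\partial_pH(\th,s,X_{s,\th},\partial_xV(\th,s,X_{s,\th},\rho_s),\rho_s)\,ds+\si(\th,s,X_{s,\th},\rho_s)\,dB^\th_s$ with $\rho_s:=\cL(X_s)$. Since $V$ is a classical solution, $\partial_xV$ is bounded and (together with $\partial_{xx}V$ and $\partial_x\frac{\d V}{\d\mu}$, whose regularity is in any case demanded for Theorem \ref{ito} to apply) Lipschitz in $(x,\rho)$ for $d_1$; combined with Assumption \ref{c2} the coefficients are Lipschitz, so the SDE has a unique solution $X\in\dbL_\boxtimes^2$, $\l_I$-measurable in $\th$, and a flow $\rho\in\cM$ with $\rho_t=\mu$. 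Set $Y_{s,\th}:=V(\th,s,X_{s,\th},\rho_s)$ and $Z_{s,\th}:=\si(\th,s,X_{s,\th},\rho_s)\partial_xV(\th,s,X_{s,\th},\rho_s)$; note $|Z_{s,\th}|\le M$, so the truncation built into $\hat b,\hat F$ is never active, exactly as in the proof of Theorem \ref{wellposed}. Applying the It\^o formula of Theorem \ref{ito} together with the classical one to $V(\th,s,X_{s,\th},\rho_s)$ — this is precisely the computation leading to \eqref{master} — the transport term generated along the flow $\rho_s=\cL(X_s)$ coincides with the index integral in \eqref{master} (because along this flow the population drift is $\partial_pH(\cdot,\partial_xV,\cdot)$ and $\xi^{\tl\th}\sim\rho_s^{\tl\th}\stackrel{d}{=}X_{s,\tl\th}$), so the $ds$-drift collapses to $-\big(H(\th,s,X_{s,\th},\partial_xV,\rho_s)-\partial_pH(\th,s,X_{s,\th},\partial_xV,\rho_s)\partial_xV\big)=-\hat F(\th,s,X_{s,\th},Z_{s,\th},\rho_s)$, while the forward drift equals $\hat b(\th,s,X_{s,\th},Z_{s,\th},\rho_s)$ and the terminal value is $G(\th,X_{T,\th},\rho_T)$. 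Hence $(X,Y,Z)$ solves \eqref{fbsdesystem} and \eqref{decouplingfield} holds for it.

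\emph{Uniqueness.} Let $(X',Y',Z')$ be any solution of \eqref{fbsdesystem}, $\rho'_s:=\cL(X'_s)$, and put $\d Y_{s,\th}:=Y'_{s,\th}-V(\th,s,X'_{s,\th},\rho'_s)$ and $e_{s,\th}:=\si^{-1}(\th,s,X'_{s,\th},\rho'_s)Z'_{s,\th}-\partial_xV(\th,s,X'_{s,\th},\rho'_s)$. Apply the combined It\^o formula to $V(\th,s,X'_{s,\th},\rho'_s)$ using the \emph{actual} dynamics of $X'$ (whose drift is $\hat b(\th,s,X'_{s,\th},Z'_{s,\th},\rho'_s)=\partial_pH(\th,s,X'_{s,\th},\si^{-1}Z'_{s,\th},\rho'_s)$), subtract the backward equation of \eqref{fbsdesystem}, and use \eqref{master} to eliminate $\partial_tV$. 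One finds that $(\d Y_{\cdot,\th},\si(\cdot)e_{\cdot,\th})$ solves a linear BSDE with zero terminal value whose driver is bounded by $C|e_{s,\th}|+C\int_I\dbE|e_{s,\tl\th}|\,\l_I(d\tl\th)$: the local part comes from $H(\cdot,\partial_xV,\cdot)-H(\cdot,\si^{-1}Z',\cdot)$ and from $\partial_pH(\cdot,\si^{-1}Z',\cdot)e_{s,\th}$, while the nonlocal part is the mismatch between the optimal transport drift in \eqref{master} and the true drift of $X'$, controlled by the Lipschitz bound on $\partial_pH$ and the boundedness of $\partial_x\frac{\d V}{\d\mu}$. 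The standard energy identity for this BSDE, Young's inequality with a small parameter, and the uniform nondegeneracy $|\si|\ge 1/L$ (to absorb the $|e_{s,\th}|^2$ contributions into the coercive $|\si e_{s,\th}|^2$ term), then integration in $\th\in I$ and a backward Gronwall argument, yield $\d Y\equiv0$ and $e\equiv0$. Consequently $Y'_{s,\th}=V(\th,s,X'_{s,\th},\rho'_s)$ and $Z'_{s,\th}=\si(\th,s,X'_{s,\th},\rho'_s)\partial_xV(\th,s,X'_{s,\th},\rho'_s)$, so $X'$ solves the very McKean--Vlasov SDE of the construction step; by its uniqueness $X'=X$, hence $(X',Y',Z')=(X,Y,Z)$, and \eqref{decouplingfield} holds for every solution.

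\emph{Main obstacle.} I expect the uniqueness step to be the hard part. When the It\^o formula is applied along the flow of an \emph{arbitrary} solution, the transport term carries that solution's own forward drift $\partial_pH(\cdot,\si^{-1}Z',\cdot)$ rather than the master-equation drift $\partial_pH(\cdot,\partial_xV,\cdot)$, so the cancellation is only approximate and leaves a remainder proportional to $e=\si^{-1}Z'-\partial_xV$, both pointwise in $\th$ and nonlocally through the index integral. Closing the Gronwall estimate for \emph{arbitrary} $T$ — not merely small $T$ as in Theorem \ref{wellposed} — hinges on using the ellipticity of $\si$ to absorb these $e$-terms into $\int|\si e|^2$, and on the fact that, since the interaction runs through the \emph{deterministic} law ensemble, the nonlocal remainder is a genuine expectation whose $L^2(I)$-norm contracts under integration in $\th$. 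A secondary point to handle carefully is checking that the regularity recorded in the definition of a classical solution of \eqref{master} (together with the gradient bounds behind Assumption \ref{c2}) indeed suffices both to well-pose the forward SDE and to license the combined It\^o formula.
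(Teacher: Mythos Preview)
Your construction step coincides with the paper's: plug $V$ into the forward McKean--Vlasov SDE, apply the combined It\^o formula of Theorem~\ref{ito}, and read off that the triple $(X,V(\cdot,X,\rho),\si\partial_xV(\cdot,X,\rho))$ solves \eqref{fbsdesystem}. The uniqueness argument, however, follows a genuinely different route. The paper does \emph{not} run an energy/Gronwall estimate on the difference $\delta Y$; instead it partitions $[0,T]$ into subintervals of length at most the $\d_0$ from Theorem~\ref{wellposed} and proceeds by backward induction: on $[t_n,T]$ the terminal datum of an arbitrary solution is $G$, so Theorem~\ref{wellposed} forces it to agree with the constructed solution and in particular $Y_{t_n}=V(\th,t_n,X_{t_n},\cL(X_{t_n}))$; this becomes the terminal datum on $[t_{n-1},t_n]$, and one applies Theorem~\ref{wellposed} again with $V(\th,t_n,\cdot,\cdot)$ in place of $G$, iterating down to $t_0=0$.

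Your approach is closer in spirit to the Cardaliaguet--Delarue--Lasry--Lions master-equation analysis: it is self-contained and yields global uniqueness in one shot, the point being that the driver of the linear BSDE for $\delta Y$ depends on $e$ but not on $\delta Y$, so Young's inequality with a small parameter absorbs all $e$-contributions (local and nonlocal) into the coercive term $\int|\si e|^2$ for \emph{any} $T$. The paper's approach is more modular---it simply recycles the small-time contraction already proved---and requires only that $V(\th,t_i,\cdot,\cdot)$ inherit the Lipschitz structure of $G$ so that Theorem~\ref{wellposed} applies on each subinterval. Your route, by contrast, needs boundedness of $\partial_x\frac{\d V}{\d\mu}$ and control of $\partial_pH$ along the \emph{arbitrary} solution (so that the driver bound $C|e_\th|+C\int_I\dbE|e_{\tl\th}|\,\l_I(d\tl\th)$ holds); this is fine if one works in the class $Z'\in\dbZ$ or records a bit more regularity on $V$, but it is worth flagging explicitly since the definition of ``classical solution'' in the paper does not literally include these bounds.
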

\begin{proof}
    First show the existence of by checking \eqref{decouplingfield} provide a solution of the FBSDE system \eqref{fbsdesystem}
Since $V$ is a classical solution of the PDE \eqref{master}, the following decoupled FBSDE system is well-posed:
\begin{equation}\label{decoupledfbsdeV}
    \begin{cases}
         \tl X_{s,\th}^{t,\xi}=\xi^\th+\int_t^s  \pa_p H(\th,r,\tl X_{r,\th}^{t,\xi},\pa_xV(\th,r,\tl X_{r,\th}^{t,\xi},\tl\rho_r)dr + \int_t^s \si(\th,r,\tl X_{r,\th}^{t,\xi},\tl\rho_r)dB_r^\th\\
         \tl Y_{s,\th}^{t,\xi}=V(\th,t_1,\tl X_{t_1,\th}^{t,\xi},\tl\rho_{t_1})+\int_s^{t_1} H(\th,r,\tl X_{r,\th}^{t,\xi},\pa_xV(\th,r,\tl X_{r,\th}^{t,\xi},\tl\rho_r),\tl\rho_r)\\
         ~~~~~~~~~~~-\pa_p H(\th,r,\tl X_{r,\th}^{t,\xi},\pa_xV(\th,r,\tl X_{r,\th}^{t,\xi},\tl\rho_r),\tl\rho_r)\pa_xV(\th,r,\tl X_{r,\th}^{t,\xi},\tl\rho_r)dr-\int_s^{t_1} \tl Z_{r,\th}^{t,\xi} dB_r^\th\\
         ~\forall s\in[t,t_1]\quad\forall\th\in I,
    \end{cases}
\end{equation}
 then $(\tl Y_{s,\th}^{t,\xi},\tl Z_{s,\th}^{t,\xi})=(V(\th,s,X_{s,\th}^{t,\xi},\tl\rho_s),\si(\th,s,\tl X_{s,\th}^{t,\xi},\rho_s)\pa_xV(\th,s,X_{s,\th}^{t,\xi},\tl\rho_s))$, where $\tl\rho_s=\cL(\tl X_{s}^{t,\xi})$. By \eqref{master1}, $(\tl X^{t,\xi},\tl Y^{t,\xi},\tl Z^{t,\xi})$ is a solution of \eqref{fbsdesystem}.

For uniqueness, let $(X,Y,Z)\in\dbL_\boxtimes^2$ by an arbitrary solution of \eqref{fbsdesystem} and let us consider the solution of FBSDE system \eqref{fbsdesystem} on time interval $[t_n,T]$ where $T-t_n\leq\d_0$ defined in Theorem \ref{wellposed} given the initial $(t_n,X_{t_n},\cL(X_{t_n}))$. Then we have that given initial state ensemble $X_{t_n}$, the solution is unique and satisfies \eqref{decouplingfield} on time interval $[t_n,T]$. Thus $Y_{t_n,\th}^{t_n,X_{t_n,\th},\cL(X_{t_n})}=V(t_n,\th,X_{t_n,\th},\cL(X_{t_n}))$ and $(X^{t_n,X_{t_n,\th},\cL(X_{t_n})},Y^{t_n,X_{t_n,\th},\cL(X_{t_n})},Z^{t_n,X_{t_n,\th},\cL(X_{t_n})})=(X,Y,Z)$. 
Then we define $0=t_0<t_1<......<t_n<T$ and show by backward induction. On time interval $[t_{i-1},t_i]$, we solve the FBSDE system
\begin{equation}\label{fbsdesystemti}
    \begin{cases}
        X_{s,\th}^{t_{i-1},X_{t_{i-1}},\cL(X_{t_{i-1}})}=&X_{t_i}+\int_{t_i}^s \hat b(\th,r,X_{r,\th}^{t_{i-1},X_{t_{i-1}},\cL(X_{t_{i-1}})},Z_{r,\th}^{t_{i-1},X_{t_{i-1}},\cL(X_{t_{n-1}})},\rho_r^i)dr \\
        &+ \int_t^s \si(\th,r,X_{r,\th}^{t_{n-1},X_{t_{n-1}},\cL(X_{t_{n-1}}},\cL(X_{r}^{t,\xi,\mu}))dB_r^\th\\
        Y_{s,\th}^{t_{i-1},X_{t_{i-1}},\cL(X_{t_{i-1}})}=&V(\th,t_i,X_{i,\th},\cL(X_{i}))-\int_s^T Z_{r,\th}^{t_{i-1},X_{t_{i-1}},\cL(X_{t_{i-1}})} dB_r^\th\\
        &+\int_s^{t_i} \hat F(\th,r,X_{r,\th}^{t_{i-1},X_{t_{i-1}},\cL(X_{t_{i-1}})},Z_{r,\th}^{t_{i-1},X_{t_{i-1}},\cL(X_{t_{i-1}})},\rho_r^i)dr
    \end{cases}
\end{equation}
where $\rho_r^i=\cL(X_{r}^{t_{i-1},X_{t_{i-1}},\cL(X_{t_{i-1}})})$. By Theorem \ref{wellposed}, we get $Y_{t_{i-1},\th}^{t_i,X_{t_i,\th},\cL(X_{t_i})}=V(t_i,\th,X_{t_i,\th},\cL(X_{t_i}))$ and $(X^{t_i,X_{t_i,\th},\cL(X_{t_i})},Y^{t_i,X_{t_i,\th},\cL(X_{t_i})},Z^{t_i,X_{t_i,\th},\cL(X_{t_i})})=(X,Y,Z)$. Thus the solution of $\eqref{fbsdesystem}$ is indeed unique on $[0,T]$.
\end{proof}

\bibliographystyle{plain}
\bibliography{refs}

@book{ACDPS,
    author = {Yves Achdou and Pierre Cardaliaguet and François Delarue and Alessio Porretta and Filippo Santambrogio},
    title = {Mean Field Games},
    publisher = {Springer Cham},
    year = {2020}
}

@article{AHLLM,
	author = {Achdou, Yves and Han, Jiequn and Lasry, Jean-Michel and Lions, Pierre-Louis and Moll, Benjamin},
	journal = {The Review of Economic Studies},
	number = {1},
	pages = {45--86},
	publisher = {Oxford University Press},
	title = {Income and wealth distribution in macroeconomics: A continuous-time approach},
	volume = {89},
	year = {2022}}

@article{Aurell2022,
	author = {Aurell, Alexander and Carmona, Ren\'{e} and Dayanikli, G\"{o}k\c{c}e and Lauri\`{e}re, Mathieu},
	journal = {SIAM Journal on Control and Optimization},
	number = {2},
	pages = {S294-S322},
	title = {Optimal Incentives to Mitigate Epidemics: A \uppercase{S}tackelberg Mean Field Game Approach},
	volume = {60},
	year = {2022}}

@article{aurell2021stochastic,
	author = {Aurell, Alexander and Carmona, Ren{\'e} and Lauri{\`e}re, Mathieu},
	journal = {Applied Mathematics $\&$ Optimization},
	number = {3},
	pages = {39},
	title = {Stochastic Graphon Games: \uppercase{II}. The Linear-Quadratic Case},
	ty = {JOUR},
	volume = {85},
	year = {2022}}

@article{Aurell2022b,
	author = {Aurell, Alexander and Carmona, Ren{\'e} and Dayanıklı, G{\"o}k{\c c}e and Lauri{\`e}re, Mathieu},
	isbn = {2153-0793},
	journal = {Dynamic Games and Applications},
	number = {1},
	pages = {49--81},
	title = {Finite State Graphon Games with Applications to Epidemics},
	volume = {12},
	year = {2022}}

@article{Bertucci02122023,
	author = {Charles Bertucci},
	journal = {Communications in Partial Differential Equations},
	number = {10-12},
	pages = {1245--1285},
	publisher = {Taylor \& Francis},
	title = {Monotone solutions for mean field games master equations: Continuous state space and common noise},
	volume = {48},
	year = {2023}}

@article{Bertucci2024,
	author = {Bertucci, Charles and Cecchin, Alekos},
	journal = {SIAM Journal on Mathematical Analysis},
	number = {2},
	pages = {2569-2610},
	title = {Mean Field Games Master Equations: From Discrete to Continuous State Space},
	volume = {56},
	year = {2024}}

@article{Bertucci04032019,
	author = {Charles Bertucci and Jean-Michel Lasry and Pierre-Louis Lions},
	journal = {Communications in Partial Differential Equations},
	number = {3},
	pages = {205--227},
	publisher = {Taylor \& Francis},
	title = {Some remarks on mean field games},
	volume = {44},
	year = {2019}}

@article{Border,
    author = {Kim C. Border} ,
    title = {Miscellaneous Notes on Optimization Theory and Related Topics},
    journal = {Lecture Notes, healy.econ.ohio-state.edu/kcb/AddedByPJ/Maximization.pdf},
    year = {2015} 
}

@inproceedings{caines2019graphon,
	author = {Caines, Peter E and Huang, Minyi},
	booktitle = {2019 IEEE 58th Conference on Decision and Control (CDC)},
	organization = {IEEE},
	pages = {286--292},
	title = {Graphon mean field games and the \uppercase{GMFG} equations: $\varepsilon$-Nash equilibria},
	year = {2019}}

@article{cao2025,
      title={Probabilistic Analysis of Graphon Mean Field Control}, 
      author={Zhongyuan Cao and Mathieu Laurière},
      year={2025},
      eprint={2505.19664},
      archivePrefix={arXiv},
      primaryClass={math.OC},
      url={https://arxiv.org/abs/2505.19664}, 
journal = {Preprint, arXiv: 2505.19664}
}

@book{cardaliaguet2019master,
 author = {Pierre Cardaliaguet and François Delarue and Jean-Michel Lasry and Pierre-Louis Lions},
 publisher = {AMS 201, Princeton University Press},
 title = {The Master Equation and the Convergence Problem in Mean Field Games},
 urldate = {2025-06-08},
 year = {2019}
}

@article{CardaliaguetLehalle,
	author = {Cardaliaguet, Pierre and Lehalle, Charles-Albert},
	journal = {Mathematics and Financial Economics},
	number = {3},
	pages = {335--363},
	publisher = {Springer},
	title = {Mean field game of controls and an application to trade crowding},
	volume = {12},
	year = {2018}}

@article{Cardaliaguet2022,
	author = {Cardaliaguet, Pierre and Souganidis, Panagiotis},
        journal = {SIAM Journal on Mathematical Analysis},
	number = {4},
	pages = {4198-4237},
	title = {Monotone Solutions of the Master Equation for Mean Field Games with Idiosyncratic Noise},
	volume = {54},
	year = {2022}}

@article{GMFG1,
	author = {Carmona, Ren\'{e} and Cooney, Daniel B. and Graves, Christy V. and Lauri\`{e}re, Mathieu},
	journal = {Mathematics of Operations Research},
	number = {1},
	pages = {750-778},
	title = {Stochastic Graphon Games: I. The Static Case},
	volume = {47},
	year = {2022}}

@article{Carmona02092023,
	author = {Ren\'{e} Carmona and Quentin Cormier and H. Mete Soner},
	journal = {Communications in Partial Differential Equations},
	number = {9},
	pages = {1214--1244},
	publisher = {Taylor \& Francis},
	title = {Synchronization in a \uppercase{K}uramoto mean field game},
	volume = {48},
	year = {2023}}

@article{SystemRisk,
	author = {Carmona, Ren\'{e} and Fouque, {Jean Pierre} and Sun, {Li Hsien}},
	issn = {1539-6746},
	journal = {Communications in Mathematical Sciences},
	number = {4},
	pages = {911--933},
	publisher = {International Press of Boston, Inc.},
	title = {Mean field games and systemic risk},
	volume = {13},
	year = {2015}}

@article{Carmona2018,
	author = {Carmona, Ren{\'e} and Fouque, Jean-Pierre and Mousavi, Seyyed Mostafa and Sun, Li-Hsien},
	isbn = {1573-2878},
	journal = {Journal of Optimization Theory and Applications},
	number = {2},
	pages = {366--399},
	title = {Systemic Risk and Stochastic Games with Delay},
	volume = {179},
	year = {2018}}

@book{carmona2018probabilistic,
	author = {Carmona, Ren{\'e} and Delarue, Fran{\c{c}}ois},
	publisher = {Springer Cham},
	title = {Probabilistic Theory of Mean Field Games with Applications I-II},
	year = {2018}}

@article{Carmona2017,	
	author = {Carmona, Ren{\'e} and Delarue, Fran{\c c}ois and Lacker, Daniel},
	id = {Carmona2017},
	isbn = {1432-0606},
	journal = {Applied Mathematics \& Optimization},
	number = {1},
	pages = {217--260},
	title = {Mean Field Games of Timing and Models for Bank Runs},
	volume = {76},
	year = {2017}}

@article{Carmona2022,
	author = {Carmona, Ren{\'e} and Dayanıklı, G{\"o}k{\c c}e and Lauri{\`e}re, Mathieu},
	isbn = {2153-0793},
	journal = {Dynamic Games and Applications},
	number = {3},
	pages = {897--928},
	title = {Mean Field Models to Regulate Carbon Emissions in Electricity Production},
	volume = {12},
	year = {2022}}

@article{Carmona2020,
	author = {Carmona, Ren{\'e} and Graves, Christy V.},
	isbn = {2153-0793},
	journal = {Dynamic Games and Applications},
	number = {1},
	pages = {79--99},
	title = {Jet Lag Recovery: Synchronization of Circadian Oscillators as a Mean Field Game},
	volume = {10},
	year = {2020}}

@book{CCD,
 author = {Jean-François Chassagneux and Dan Crisan and François Delarue},
 publisher = {Memoirs of the American Mathematical Society},
 title = {A Probabilistic Approach to Classical Solutions of the Master Equation for Large Population Equilibria},
 volume = {280},

 year = {2022}
}

@article{Crucianelli24,
      title={Interacting particle systems on sparse $W$-random graphs}, 
      author={Carla Crucianelli and Ludovic Tangpi},
      year={2024},
      eprint={2410.11240},
      archivePrefix={arXiv},
        journal = {Preprint, arXiv: 2410.11240}
}

@article{delarue2018,
author = {Delarue, François and Lacker, Daniel and Ramanan, Kavita},
year = {2018},
month = {04},
title = {From the master equation to mean field game limit theory: A central limit theorem},
volume = {24},
journal = {Electronic Journal of Probability}
}

@article{elie2020,
      title={Large Banking Systems with Default and Recovery: A Mean Field Game Model}, 
      author={Romuald \'{E}lie and Tomoyuki Ichiba and Mathieu Laurière},
      year={2020},

	journal = {Preprint, arXiv: 2001.10206}
}

@article{FFI,
    author = {Yichen Feng and Jean-Pierre Fouque and Tomoyuki Ichiba},
    title = {Linear-Quadratic Stochastic Differential Games on Directed Chain Networks},
    journal = { Journal of Mathematics and Statistical Science},
    year = {2021},
    volume = {7},
issue = {2},
page = {25-67}
}

@article{FFI2,
    author = {Yichen Feng and Jean-Pierre Fouque and Tomoyuki Ichiba},
    title = {Linear-Quadratic Stochastic Differential Games on Random Directed Networks},
    journal = { Journal of Mathematics and Statistical Science},
    year = {2021},
    volume = {7},
issue = {3},
page = {79-108}
}

@article{Gangbo2022,
	author = {Gangbo, Wilfrid and M{\'e}sz{\'a}ros, Alp{\'a}r R.},
	journal = {Communications on Pure and Applied Mathematics},
	number = {12},
	pages = {2685-2801},
	title = {Global Well-Posedness of Master Equations for Deterministic Displacement Convex Potential Mean Field Games},
	volume = {75},
	year = {2022}}

@article{GMMZ,
	author = {Wilfrid Gangbo and Alp{\'a}r R. M{\'e}sz{\'a}ros and Chenchen Mou and Jianfeng Zhang},
	journal = {The Annals of Probability},
	number = {6},
	pages = {2178 -- 2217},
	publisher = {Institute of Mathematical Statistics},
	title = {{Mean field games master equations with nonseparable Hamiltonians and displacement monotonicity}},
	volume = {50},
	year = {2022}}

@article{GANGBO20156573,
	author = {Wilfrid Gangbo and Andrzej {\'S}wiech},
	journal = {Journal of Differential Equations},
	number = {11},
	pages = {6573-6643},
	title = {Existence of a solution to an equation arising from the theory of Mean Field Games},
	volume = {259},
	year = {2015}}

@article{Graber2024,
	author = {Graber, P. Jameson and M{\'e}sz{\'a}ros, Alp{\'a}r R.},
	journal = {Mathematische Annalen},
	number = {3},
	pages = {4497--4533},
	title = {On some mean field games and master equations through the lens of conservation laws},
	volume = {390},
	year = {2024}}

@article{huser2015too,
	author = {H{\"u}ser, Anne-Caroline},
	journal = {SAFE working paper},
	title = {Too interconnected to fail: A survey of the interbank networks literature},
	year = {2015}}

@article{huang2006large,
	author = {Huang, Minyi and Malham{\'e}, Roland P and Caines, Peter E},
	journal = {Communications in Information \& Systems},
	number = {3},
	pages = {221--252},
	publisher = {International Press of Boston},
	title = {Large population stochastic dynamic games: Closed-loop \uppercase{M}c\uppercase{K}ean-\uppercase{V}lasov systems and the \uppercase{N}ash certainty equivalence principle},
	volume = {6},
	year = {2006}}

@article{yang2024,
      title={Relative Arbitrage Opportunities in an Extended Mean Field System}, 
      author={Nicole Tianjiao Yang and Tomoyuki Ichiba},
      year={2024},
      eprint={2311.02690},
    journal = {Preprint, arXiv: 2311.02690}
}

@article{Kai,
	author = {Cui, Kai and Koeppl, Heinz},
	journal = {International Conference on Learning Representations (ICLR)},
	publisher = {arXiv},
	title = {Learning Graphon Mean Field Games and Approximate Nash Equilibria},
	year = {2022}}

@article{LS,
	author = {Lacker, Daniel and Soret, Agathe},
	journal = {Mathematics of Operations Research},
	number = {4},
	pages = {1987-2018},
	title = {A Label-State Formulation of Stochastic Graphon Games and Approximate Equilibria on Large Networks},
	volume = {48},
	year = {2023}}

@article{LRW,
	author = {Daniel Lacker and Kavita Ramanan and Ruoyu Wu},
	journal = {The Annals of Applied Probability},
	number = {2},
	pages = {843 -- 888},
	publisher = {Institute of Mathematical Statistics},
	title = {{Local weak convergence for sparse networks of interacting processes}},
	volume = {33},
	year = {2023}}

@article{LackerSoret,
	author = {Lacker, Daniel and Soret, Agathe},
	journal = {Mathematics and Financial Economics},
	number = {2},
	pages = {263--281},
	publisher = {Springer},
	title = {Many-player games of optimal consumption and investment under relative performance criteria},
	volume = {14},
	year = {2020}}

@article{LackerZariphopoulou,
	author = {Lacker, Daniel and Zariphopoulou, Thaleia},
	journal = {Mathematical Finance},
	number = {4},
	pages = {1003--1038},
	publisher = {Wiley Online Library},
	title = {Mean field and n-agent games for optimal investment under relative performance criteria},
	volume = {29},
	year = {2019}}

@article{Lions08,
	author = {Lion, Pierre-Louis},
	organization = {Coll\`ege de France},
        title={Medium-field games},
	journal = {Cours ay Coll\`ege de France, https://www.college-de-france.fr/en/agenda/lecture/medium-field-games},
	year = {2007}}

@article{Lasry-Lions,
	author = {Lasry, Jean-Michel and Lions, Pierre-Louis},
	doi = {10.1007/s11537-007-0657-8},
	journal = {Japanese Journal of Mathematics},
	month = {03},
	pages = {229-260},
	title = {Mean Field Games},
	volume = {2},
	year = {2007},
	Bdsk-Url-1 = {https://doi.org/10.1007/s11537-007-0657-8}}

@book{mou2019wellposedness,
 author = {Mou, Chenchen and Zhang, Jianfeng},
 publisher = {Memoirs of the American Mathematical Society},
title = {Wellposedness of second order master equations for mean field games with nonsmooth data},
 volume = {280},
 year = {2022}
}

@article{MZ,
title = {Mean field games of controls: Propagation of monotonicities},
journal = {Probability, Uncertainty and Quantitative Risk},
volume = {7},
number = {3},
pages = {247-274},
year = {2022},
issn = {2095-9672},
author = {Chenchen Mou and Jianfeng Zhang}}

@article{MZ2,
	author = {Chenchen Mou and Jianfeng Zhang},
	journal = {Journal of the European Mathematical Society},
	title = {Mean field game master equations with anti-monotonicity conditions},
	volume = {online},
	year = {2024}}

@article{mou2025,
      title={Second-order monotonicity conditions and mean field games with volatility control}, 
      author={Chenchen Mou and Jianfeng Zhang and Jianjun Zhou},
        journal={Preprint, arXiv 2503.10097},
      year={2025},
}

@book{Pham2009,
	author = {Pham, Huyn},
	edition = {1st},
	isbn = {3540894993},
	publisher = {Springer Publishing Company, Incorporated},
	title = {Continuous-Time Stochastic Control and Optimization with Financial Applications},
	year = {2009}}

@article{Pham2024,
    title={Nonlinear Graphon mean-field systems}, 
    author={Fabio Coppini and Anna De Crescenzo and Huyen Pham},
    journal = {Stochastic Processes and their Applications},
    year = {to appear}
}

@article{SUN200631,
	author = {Yeneng Sun},
	journal = {Journal of Economic Theory},
	number = {1},
	pages = {31-69},
	title = {The exact law of large numbers via \uppercase{F}ubini extension and characterization of insurable risks},
	volume = {126},
	year = {2006}}

@article{SUN2009432,
	author = {Yeneng Sun and Yongchao Zhang},
	journal = {Journal of Economic Theory},
	number = {1},
	pages = {432-443},
	title = {Individual risk and \uppercase{L}ebesgue extension without aggregate uncertainty},
	volume = {144},
	year = {2009}}

@article{Tangpi2025,
	author = {Tangpi, Ludovic and Wang, Shichun},
	isbn = {1432-1122},
	journal = {Finance and Stochastics},
	number = {2},
	pages = {343--398},
	title = {Optimal bubble riding: A mean field game with varying entry times},
	volume = {29},
	year = {2025}}

@article{Tangpi2024,
	author = {Tangpi, Ludovic and Wang, Shichun},
	id = {Tangpi2024},
	isbn = {1862-9660},
	journal = {Mathematics and Financial Economics},
	number = {2},
	pages = {275--312},
	title = {Optimal bubble riding with price-dependent entry: A mean field game of controls with common noise},
	volume = {18},
	year = {2024}}

@article{tangpi2023optimal,
	author = {Tangpi, Ludovic and Zhou, Xuchen},
	da = {2024/04/01},
	journal = {Finance and Stochastics},
	number = {2},
	pages = {497--551},
	title = {Optimal investment in a large population of competitive and heterogeneous agents},
	ty = {JOUR},
	volume = {28},
	year = {2024}}

@book{Zhangbook,
	author = {Zhang, Jianfeng},
	place = {New York, NY},
	publisher = {Springer New York},
	title = {Backward Stochastic Differential Equations: From Linear to Fully Nonlinear Theory},
	year = {2017}}

@phdthesis{ZhangThesis,
    author = {Jiangfeng Zhang}, 
    title = {Some fine properties of backward stochastic differential equations},
    school = {Purdue University},
    year = 2001,
    type = {Doctoral Dissertation}
}

\end{document}